\renewcommand{\subjclassname}{2020 Mathematics Subject Classification}
\begin{document}
\title[Cone degenerate p-Laplace equation]
{Existence and uniqueness for cone degenerate p-Laplace equation}
\author[ Hua Chen, Jiangtao Hu, Xiaochun Liu, Yawei Wei, Mengnan Zhang]
{Hua Chen, Jiangtao Hu, Xiaochun Liu, Yawei Wei, Mengnan Zhang}

\address{Hua Chen  \newline
School of Mathematics and Statistics\\ Wuhan University\\ Wuhan 430072, China}
\email{chenhua@whu.edu.cn}

\address{Jiangtao Hu  \newline
School of Mathematical and Sciences\\ Nankai University\\ Tianjin 300071, China}
\email{2120210030@mail.nankai.edu.cn}

\address{Xiaochun Liu \newline
School of Mathematics and Statistics\\  Wuhan University\\ Wuhan 430072, China}
\email{xcliu@whu.edu.cn}

\address{Yawei Wei \newline
School of Mathematical Sciences and LPMC\\ Nankai University\\ Tianjin 300071, China}
\email{weiyawei@nankai.edu.cn}

\address{Mengnan Zhang \newline
School of Mathematical and Sciences \\ Nankai University\\ Tianjin 300071, China}
\email{1120220030@mail.nankai.edu.cn}

\thanks{}

\subjclass[]{}
\keywords{conical singularity, degenerate quasilinear elliptic equations; viscosity solution;  Alexandrov-Bakelman-Pucci estimate; comparison principle; weak solution.
}

\begin{abstract}
In this paper, we study the cone degenerate p-Laplace equation. We provide the existence of
the viscosity solutions by proving Alexandrov-Bakelman-Pucci and H\"older estimates. Further more, we give the comparison principle by an equivalent transformation. Finally, we obtain the existence of weak solutions by analyzing the relationship between weak solutions and viscosity solutions.
\end{abstract}

\maketitle
\numberwithin{equation}{section}
\newtheorem{Theorem}{Theorem}[section]
\newtheorem{Remark}{Remark}[section]
\newtheorem{lemma}{Lemma}[section]
 \newtheorem{corollary}{Corollary}[section]
\newtheorem{assumption}{Assumption}[section]
\newtheorem{definition}{Definition}[section]
\newtheorem{Proposition}{Proposition}[section]
\newcommand{\R}{\mathbb{R}}
\newcommand{\B}{\mathbf{B}}
\newcommand{\e}{{\mathcal {E}}}
\newcommand{\M}{{\mathcal{H}}}
\newcommand{\Y}{{\mathbf{P}}}

\renewcommand{\subjclassname}
\allowdisplaybreaks

\section{Introduction}

In this paper, we study the cone degenerate p-Laplace equation.
\begin{equation}\label{eq:11}
t^{-p}div_ \mathbb{B}(|\nabla_ \mathbb{B}u|^{p-2}\nabla_ \mathbb{B}u)
     +t^{-p}(n-p)|\nabla_ \mathbb{B}u|^{p-2}(t\partial_t u)=f(t,x) \ \ \ \ (t,x) \in \mathbb{B},
\end{equation}
and its Dirichlet problem
\begin{equation}\label{H8}
    \begin{cases}
F((t,x),\nabla_\mathbb{B}u,\nabla_\mathbb{B}^2u)=0  \ \ \ & (t,x)\in \mathbb B,\\
u=0  \ & (t,x)\in \partial{\mathbb B}.
\end{cases}
\end{equation}
Here
$$\begin{aligned}
 F((t,x),\nabla_\mathbb{B}u,\nabla_\mathbb{B}^2u)
 &:= t^{-p}div_ \mathbb{B}(|\nabla_ \mathbb{B}u|^{p-2}\nabla_ \mathbb{B}u) \\&+{t^{-p}}(n-p)|\nabla_\mathbb{B}u|^{p-2}(t\partial_t u) -f(t,x),
 \end{aligned}$$
 $f(t,x)$ is continuous, and the domain $\mathbb{B}= (0,1)\times X $ is the stretched  conical domain. 
 The X is a bounded connected open set in $\mathbb{R}^{n-1}$ with smooth boundary, and $x\in $ X.
 Moreover set
\begin{equation}\label{1.2}
\nabla_\mathbb{B}:=(t \partial t,\partial x_1,...,\partial x_{n-1})
 \ \text{and} \ \text{div}_\mathbb{B}:=\nabla_\mathbb{B}\cdot.\end{equation}


The motivation of  nonlinear  equation \eqref{eq:11} comes from the calculus on manifolds with conical singularities. A finite dimensional manifold $B$ with conical singularities is a topological space with a finite subset $$\begin{aligned}B_0= \left\{b_1, \dots, b_M\right\} \subset B
 \end{aligned}$$
 of conical singularities, with the two following properties:

1. $B \backslash B_0$ is a $C^{\infty}$ manifold.

2. Every $b \in B_0$ has an open neighhourhood $U$ in $B$, such that there is a homeomorphism
$$
\varphi: U \rightarrow X^{\Delta},
$$
and $\varphi$ restricts a diffeomorphism
$$
\varphi^{\prime}: U \backslash\{b\} \rightarrow X^{\wedge} .
$$
Here $X$ is  a bounded open subset in $\mathbb{R}^n$ with smooth boundary, and set
$$
X^{\Delta}=\overline{\mathbb{R}}_{+} \times X /(\{0\} \times X) .
$$
This local model is interpreted as a cone with the base $X$. Since the analysis is formulated off the singularity, it makes sense to pass to
$$
X^{\wedge}=\mathbb{R}_{+} \times X,
$$
the open stretched cone with the base $X$. In \eqref{eq:11} we take the simplest case of finite stretched cone such that
$$
\mathbb{B}=(0,1) \times X \text { and }  \partial\mathbb{B}=\{1\}\times X\cup(0,1]\times \partial X.
$$

The Riemannian metric on $X^{\wedge}=\mathbb{R}_{+} \times X$ is
$$
g:=d t^2+t^2 d x^2.
$$
Then we have
$$
g=\left(g_{i j}\right)=\left(\begin{array}{cccc}
1 & & & \\
& t^2 & & \\
& & \ldots & \\
& & & t^2
\end{array}\right) \quad\left(g^{i j}\right)=\left(\begin{array}{cccc}
1 & & & \\
& t^{-2} & & \\
& & \ldots & \\
& & &  t^{-2}
\end{array}\right)
$$
with $G=\sqrt{\left|\operatorname{det}\left(g_{i j}\right)\right|}=t^{n-1}$. Then the gradient with contra-variant components is as the follows,
$$
\nabla_g u=\left(g^{j 1}\frac{\partial u}{\partial t}+\sum_{k=2}^{n} g^{j k} \frac{\partial u}{\partial x_{k-1}}\right)_{j=1, \ldots, n}=\left(\frac{\partial}{\partial t},t^{-2} \frac{\partial}{\partial x_1}, \ldots, t^{-2} \frac{\partial}{\partial x_{n-1}}\right)u.
$$
Moreover, we have
$$
\left|\nabla_g u\right|^2=\left(\nabla_g u, \nabla_g u\right)_g=t^{-2}\left(\left(t \partial_{t} u\right)^2+\left(\partial_{x_1} u\right)^2+\ldots+\left(\partial_{x_{n-1}} u\right)^2\right) .
$$
Now we calculate the p-Laplace operator near to the conical singularity
$$\begin{aligned}
\Delta_{p g}u&= \operatorname{div}_g\left(\left|\nabla_g u\right|^{p-2} \nabla_g u\right)
\\&=  G^{-1} \left( \frac{\partial}{\partial t}+\sum_{j=1}^{n-1} \frac{\partial}{\partial x_j}\right)\left(G\left|\nabla_g u\right|^{p-2}\left( g^{j 1}\frac{\partial u}{\partial t}+\sum_{k=2}^{n} g^{j k} \frac{\partial u}{\partial x_{k-1}}\right)\right).\end{aligned}
$$
By defined \eqref{1.2} it follows that
$$\begin{aligned}
&\left|\nabla_g u\right|^{p-2}\\=&t^{-(p-2)}\left(\left(t \partial_{t} u\right)^2+\left(\partial_{x_1} u\right)^2+\ldots+\left(\partial_{x_{n-1}} u\right)^2\right)^{\frac{p-2}{2}}\\:=&t^{-(p-2)}\left|\nabla_{\mathbb{B}} u\right|^{p-2},
\end{aligned}$$
then we have
$$
\begin{aligned}
&\Delta_{p g} u\\
= & \operatorname{div}_g\left(\left|\nabla_g u\right|^{p-2} \nabla_g u\right) \\
= &t^{-(n-1)}\left(\frac{\partial}{\partial t}\left(t^{n-1} t^{-(p-2)}\left|\nabla_{\mathbb{B}} u\right|^{p-2} \frac{\partial u}{\partial t}\right)\right) \\
& +t^{-(n-1)}\left(\frac{\partial}{\partial x_1}\left(t^{n-1} t^{-(p-2)}\left|\nabla_{\mathbb{B}} u\right|^{p-2} t^{-2} \frac{\partial u}{\partial x_1}\right)\right) \\
& +\ldots+t^{-(n-1)}\left(\frac{\partial}{\partial x_{n-1}}\left(t^{n-1} t^{-(p-2)}\left|\nabla_{\mathbb{B}} u\right|^{p-2} t^{-2} \frac{\partial u}{\partial x_{n-1}}\right)\right) \\
= & t^{-p}\left(\left(t \frac{\partial}{\partial t}\right)\left(\left|\nabla_{\mathbb{B}} u\right|^{p-2}\left(t \frac{\partial u}{\partial t}\right)\right)\right)+t^{-p}(n-p)\left|\nabla_{\mathbb{B}} u\right|^{p-2}\left(t \frac{\partial u}{\partial t}\right) \\
& +t^{-p}\left(\frac{\partial}{\partial x_1}\left(\left|\nabla_{\mathbb{B}} u\right|^{p-2} \frac{\partial u}{\partial x_1}\right)\right)+\ldots+t^{-p}\left(\frac{\partial}{\partial x_{n-1}}\left(\left|\nabla_{\mathbb{B}} u\right|^{p-2} \frac{\partial u}{\partial x_{n-1}}\right)\right) \\
=: & t^{-p} \operatorname{div}_{\mathbb{B}}\left(\left|\nabla_{\mathbb{B}} u\right|^{p-2} \nabla_{\mathbb{B}} u\right)+t^{-p}(n-p)\left|\nabla_{\mathbb{B}} u\right|^{p-2}\left(t\frac{\partial u}{\partial t}\right).
\end{aligned}
$$

 Viscosity solutions were introduced by M.G. Crandall and P.L. Lions \cite{MP} in the case of Hamilton-Jacobi equations and extended to the case of second order elliptic equations by the authors  in \cite{HP2} and  \cite{LMMA}.

  Existence of viscosity solution based upon the vanishing viscosity method    and approximation arguments was studied in 
  \cite{JP}, \cite{P},
  \cite{G4}, \cite{G5}, 
  \cite{HMS}, 
  \cite{HI28}, \cite{HI29}, 
   \cite{MP17}, \cite{MP18}, \cite{MP19}. H.Ishi's in \cite{HI30}  made significant improvements and simplifications to the existence program, 
   in which the
    uniqueness result is generated through the adaptation of the classical Perron's method.
  Afterwards, many articles have adopted Perron's method to study the existence of viscosity solutions,
 such as \cite{MHP}, \cite{HI26},  \cite{HP2}, \cite{RPP},  \cite{IF22}. 
 Among them, authors in \cite{IF22} using the Perron's method
  gave  the existence of viscosity solutions for the  singular fully-nonlinear elliptic equations  modeled on the q-Laplacian when the domain is bounded and has a $\mathcal{C}^2$ boundary. It is worth noting that the singularity mentioned above refers to $1<q<2$, which is completely different from the conical singularity in our article.
%
%
Furthermore, the authors in \cite{GPA} extended the results to the domain whose boundary satisfying a uniform exterior cone condition by studying the Harnack inequality.
The Alexandrov-Bakelman-Pucci (ABP) estimate has been studied widely, and there are many studies about the Harnack inequality  based on the ABP estimate. 
The classical ABP estimate has been obtained regarding the linear Laplace equation with variable coefficients 
in \cite{DN}.
 The ABP estimate was also obtained for the fully-nonlinear elliptic equations  driven by the q-Laplacian,
 in \cite{IF}, \cite{GPF},  \cite{CI}. 
We noticed that the above are studies aimed at 
 a bounded domain. 
On these foundations, the authors on
 \cite{MLA}, \cite{IIV} and \cite{IFA} have generalized the  ABP estimate  to a special unbounded region and obtained the H\"older estimate.
The above work motivates us to explore the  ABP  estimate of the viscosity solutions for \eqref{eq:11}  and H\"older estimate for \eqref{H8}.

The comparison principle  is a crucial step in the Perron's method to prove the existence and uniqueness of solutions. There have been many results in the study of comparison principle, such as \cite{IFA},\ \cite{MHP},\ \cite{HI26},\ \cite{HP2},\ \cite{IF33}.
 H. Ishii \cite{HI26} refined the method of Jensen and provided a comparison principle for fully nonlinear second-order degenerate elliptic equations with strictly growing on zero order term.
 The work in \cite{HP2} and \cite{MHP} extended the conclusion of \cite{HI26}.
   The authors \cite{TA} used the method of equivalent transformation to study the comparison principle of weak solutions. 
This provides us with ideas to deal with the difficulty of missing zero order term of \eqref{eq:11}.

The relationship between viscosity  solutions and weak solutions has been widely studied in recent years. Juutinen, Lindqvist and Manfredi in \cite{PPJ} have already  studied this relation  via the notion of p-harmonic, p-subharmonic and p-superharmonic functions. Afterwards, the authors in \cite{VP} gave a different and simpler proof  by using inf and sup convolutions, and showed that viscosity solutions
 are weak solutions in the case where lower-order term
is continuous and depends only on $(t,x)$. Furthermore, Maria Medina and Pablo Ochoa in \cite{MO}  provides the equivalence between weak solutions and viscosity solutions for  more
general second-order differential equations.
Our main work is to explore ABP estimate of the viscosity solutions for \eqref{eq:11} and H\"older estimate  for $\eqref{H8}$, then further  obtain the existence of
the viscosity solutions to the problem $\eqref{H8}$. In addition, we also provide  the comparison principle of viscosity solutions to \eqref{eq:11}. Finally, we investigate the relationship between  viscosity solutions and weak solutions of \eqref{eq:11}, and provide the existence of weak solutions.


Now, we will provide some notations and the assumptions for this article.
\begin{itemize}\label{notation}
\item[$B_d$]: the open ball with radius $d$.
\item[$\Omega_d$]: the open ``ball'' with radius $d$ on cone.
\item[$d_{(t,x)}$]:  the distance from $(t,x)$ to $\partial \mathbb{B}$.

 \end{itemize}
\begin{assumption}
    We call that the local stretched cone $\mathbb{B}$ satisfies the condition $(G^d_{\mathbb{B}})$ if
there exists a real number $\sigma,$ such that for any $(t,x)\in \mathbb{B}$ there exists a n-dimensional ``ball'' $ \Omega_{ \tilde {R}_{(t,x)}} $ of radius $ \tilde{R}_{(t,x)}\leq K_0d_{(t,x)}$  and $d_{(t,x)}\leq d_0$ satisfying
$$(t,x)\in \Omega_{\tilde{R}_{(t,x)}}\ \ \text{and} \ \ \ \vert \Omega_{\tilde{R}_{(t,x)}}  \backslash \mathbb{B}_{\tilde{R}_{(t,x)}} \vert \geq \sigma \vert \Omega_{\tilde{R}_{(t,x)}}\vert, $$
where $$d_{(t,x)}=\min\{ d\left((t,x),(s,y)\right)\ \big{|}\ \mbox{for any}\ (s,y)\in \partial \mathbb{B}\}
$$
 and $ \mathbb{B}_{\tilde{R}_{(t,x)}} $ is the $($connected$)$ component of  $\mathbb{B}\cap \Omega_{\Tilde{R}_{(t,x)}}$. 
 \end{assumption}
 Clearly, in this article, $\mathbb{B}$ satisfies the condition $(G^d_{\mathbb{B}})$.
 \begin{assumption}\label{assumption2.2}
 There exists  a sequence $\{H_{j}\}$ where each $H_j$ satisfies $(G^d_{H_{j}})$ and has a $\mathcal{C}^2$ boundary such that
$$H_{j}\subset\subset H_{j+1}\subset\subset \mathbb{B} \ \ \ \text{and } \ \ \ \cup_{j}H_{j} =\mathbb{B}.$$
 \end{assumption}

 The main results in this paper are as follows.
 \begin{definition}
 We call that $u\in USC(\Omega)$  $(resp.u\in LSC(\Omega))$ if $u$ is a upper $(resp.lower)$ semicontinuous function defined on ${\Omega}$.
 \end{definition}
\begin{Theorem}\label{T3}
    Let $v\in USC(\mathbb {\overline{B}})$ be a  viscosity subsolution of \eqref{eq:11}, and $\mathop{\sup  }\limits_{\mathbb{{B}}}v ^{+}<+ \infty$. If $f \in C( \overline{\mathbb B})\cap L^{\infty}(\mathbb{B})$,  
     then
      \begin{equation}\label{H11}
          \mathop{\sup }\limits_{\mathbb{B}}v^+ \leq\mathop{\sup }\limits_{\partial\mathbb{B}}v^++C(K_0d_0)^{\frac{p}{p-1}}\mathop {\sup }\limits_{ \mathbb{B}} {\vert\vert t^pf^-\vert\vert^{\frac{1}{p-1}}_{L^{\infty}(\Omega_{2\tilde{R}_{(t,x)}}\cap \mathbb{B})}},
      \end{equation}
      where $C $ depends on $n,\ p,\  
      K_0,\ d_0,\ \sigma$.
 \end{Theorem}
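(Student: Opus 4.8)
The plan is to reduce the cone-degenerate ABP estimate to a Euclidean ABP-type estimate by exploiting the explicit structure of the operator near the conical axis and then applying the standard ABP/Alexandrov machinery for the $p$-Laplacian-type operator in the stretched variables. First I would observe that, after multiplying \eqref{eq:11} by $t^p$, the equation becomes $\mathrm{div}_{\mathbb B}(|\nabla_{\mathbb B}u|^{p-2}\nabla_{\mathbb B}u)+(n-p)|\nabla_{\mathbb B}u|^{p-2}(t\partial_t u)=t^p f$, which in the stretched coordinates $(t,x)$ with the vector field $\nabla_{\mathbb B}=(t\partial_t,\partial_{x_1},\dots,\partial_{x_{n-1}})$ is a uniformly (in bounded subregions away from $t=0$, but with the right scaling preserved by the $t\partial_t$ combination) elliptic quasilinear operator whose structure is close to the classical $p$-Laplacian plus a bounded first-order term. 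The key point is that the degenerate weight $t^{-p}$ is absorbed into the right-hand side as $t^p f^-$, which is exactly the quantity appearing in \eqref{H11}; so the estimate we must prove is precisely the ABP bound for the transformed operator with datum $t^p f^-$, measured in the $L^\infty$ norm over the localized "balls" $\Omega_{2\tilde R_{(t,x)}}\cap\mathbb B$ dictated by the geometric condition $(G^d_{\mathbb B})$.

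Next I would run the localized ABP argument. Fix a point where $v^+$ is large; by the condition $(G^d_{\mathbb B})$ there is a cone-ball $\Omega_{\tilde R}$ with $\tilde R\le K_0 d_{(t,x)}\le K_0 d_0$ containing the point and with a definite portion $\sigma|\Omega_{\tilde R}|$ lying outside $\mathbb B$. On the component $\mathbb B_{\tilde R}$ of $\Omega_{\tilde R}\cap\mathbb B$ one compares $v$ with its boundary values on $\partial\mathbb B_{\tilde R}$; part of this boundary sits in $\partial\mathbb B$ (where $v^+\le\sup_{\partial\mathbb B}v^+$) and part in the interior sphere $\partial\Omega_{\tilde R}$. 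Using the exterior-cone-type measure condition, the portion of $\partial\mathbb B_{\tilde R}$ on $\partial\mathbb B$ is "large enough" in the ABP sense (this is the standard mechanism by which an exterior cone/density condition substitutes for full boundary data). One then applies the Alexandrov–Bakelman–Pucci maximum principle for the $p$-structured operator: the contact-set/upper-envelope argument gives
\[
\sup_{\mathbb B_{\tilde R}}v^+\le \sup_{\partial\mathbb B_{\tilde R}}v^+ + C\,\tilde R^{\frac{p}{p-1}}\,\big\|t^p f^-\big\|_{L^\infty(\mathbb B_{\tilde R})}^{\frac1{p-1}},
\]
with $C=C(n,p,\sigma)$ coming from the isoperimetric/measure estimate on the contact set. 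Substituting $\tilde R\le K_0 d_0$ and the fact that the interior-sphere part of the boundary can be controlled by iterating/covering (or by choosing the supremum point so that the interior boundary value is dominated), one gets \eqref{H11} with $C$ depending on $n,p,K_0,d_0,\sigma$. Because $v$ is only a viscosity subsolution and merely $USC$, the Alexandrov second-derivative argument is applied not to $v$ itself but to a sup-convolution regularization $v^\varepsilon$, or equivalently one works with the upper contact set and uses the viscosity inequality at points where a paraboloid touches $v$ from above; letting $\varepsilon\to0$ recovers the estimate for $v$.

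I expect the main obstacle to be two intertwined points. First, making the change to stretched coordinates genuinely legitimate at the level of \emph{viscosity} sub/supersolutions: one must check that the geometric "balls" $\Omega_d$ on the cone, the distance $d_{(t,x)}$, and the operator $F$ transform so that the classical ABP contact-set estimate (which is stated for the flat $p$-Laplacian on Euclidean balls) can be invoked on each $\mathbb B_{\tilde R}$, including verifying the correct power $\frac{p}{p-1}$ and that the Jacobian factors from the cone metric only contribute constants depending on $n,p,d_0$ (since $t\in(0,1)$ is bounded, $t^{N-1}$ and its reciprocal are controlled on the relevant scales, but near $t=0$ one must be careful that the degeneracy is fully accounted for by the $t^p f$ term and does not leak into the ellipticity constants). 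Second, handling the degenerate/quasilinear nature of the $p$-Laplacian in the ABP argument itself: unlike the uniformly elliptic linear case, the contact-set estimate for $|\nabla_{\mathbb B}u|^{p-2}\nabla_{\mathbb B}u$ requires the appropriate nonlinear version (controlling $\det D^2_{\mathbb B}v$ on the contact set by a power of $|t^p f|$), together with absorbing the lower-order term $(n-p)|\nabla_{\mathbb B}u|^{p-2}(t\partial_t u)$, which is of the same homogeneity as the principal part and therefore must be carried along in the pointwise viscosity inequality rather than treated as a small perturbation. Once these are in place the remainder is the routine covering/iteration to pass from the local estimate on each $\mathbb B_{\tilde R}$ to the global bound \eqref{H11}.
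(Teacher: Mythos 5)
Your proposal correctly identifies the change of variables, the role of $t^p f^-$, and the fact that the exterior-density condition $(G^d_{\mathbb B})$ must substitute for a smooth boundary, but the mechanism you propose for using $(G^d_{\mathbb B})$ is not the one the paper uses, and as sketched it has a gap.

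The paper does \emph{not} run a contact-set (Alexandrov) ABP estimate bounding $\det D^2 v$ on an upper contact set. Instead, after setting $M=\sup_{\mathbb B}v^+$ and $u=M-v^+\ge 0$ (a supersolution), it forms the truncation
$u_m=\min\{u,m\}$ on $A=\mathbb B\cap\Omega_{2\tilde R_{(t,x)}}$, extended by the constant $m=\inf_{\partial A\cap\Omega}u$ to $\Omega_{2\tilde R_{(t,x)}}\setminus A$. It then invokes the weak Harnack inequality of Imbert--Silvestre (the ``large gradient'' Harnack, Lemma~\ref{Theome 3.1} and Corollary~\ref{corollary4}, culminating in Lemma~\ref{lemma3.31}): after a logarithmic stretching of $t$ and a scaling, $u_m$ satisfies $\mathcal P_1^-\le 1$ on a Euclidean $B_2$, which gives a measure-decay bound and hence
\[
\Bigl(\tfrac{1}{|\Omega_{\tilde R}|}\int_{\Omega_{\tilde R}}u_m^{p_0}\tfrac{dt}{t}dx\Bigr)^{1/p_0}
\le C\bigl(\inf_{\Omega_{\tilde R}}u_m+\tilde R^{p/(p-1)}\|t^pf^-\|_{L^\infty}^{1/(p-1)}\bigr).
\]
The volume condition $(G^d_{\mathbb B})$ enters precisely here: on $\Omega_{\tilde R}\setminus\mathbb B$, which has measure $\ge\sigma|\Omega_{\tilde R}|$, $u_m\equiv m$, so the left side is $\ge m\sigma^{1/p_0}$; combined with $\inf u_m\le u(t,x)=M-v^+(t,x)$ and $m\ge M-\sup_{\partial\mathbb B}v^+$ this gives a pointwise bound on $v^+(t,x)$ with no reference to any interior boundary sup. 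This is essentially the Cabr\'e/Caffarelli boundary weak Harnack approach, also used in \cite{IFA,MLA,IIV}.

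Your proposal instead writes a local ABP estimate
$\sup_{\mathbb B_{\tilde R}}v^+\le\sup_{\partial\mathbb B_{\tilde R}}v^+ +C\tilde R^{p/(p-1)}\|t^pf^-\|^{1/(p-1)}$
and defers the treatment of the interior part $\partial\Omega_{\tilde R}\cap\mathbb B$ of $\partial\mathbb B_{\tilde R}$ to an unspecified ``iteration/covering.'' This is the crux: on that interior sphere $v^+$ can be as large as the global maximum, so the inequality above does not close without a separate argument, and your ``standard mechanism by which an exterior cone/density condition substitutes for full boundary data'' is not actually a surface-measure mechanism --- $(G^d_{\mathbb B})$ controls the \emph{volume} of $\Omega_{\tilde R}\setminus\mathbb B$, which is exactly why the weak Harnack route (integrating $u_m^{p_0}$) is the natural one. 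Also, the contact-set ABP for $p$-Laplace structures gives a bound in $\|\cdot\|_{L^n}$ of the datum; while that can be converted to the $L^\infty$ form with the right power of $\tilde R$, you would still need the interior boundary control. In short: the transformation and scaling parts of your plan are consistent with the paper, but the key lemma you rely on (nonlinear Alexandrov contact-set estimate plus unspecified covering) is not the one used, and the step that discharges the interior boundary terms is missing; replacing it by the truncation $u_m$ plus the Imbert--Silvestre weak Harnack inequality is what makes the proof close.
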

 \begin{corollary}\label{Theorem4.2}
     Let $v\in    C(\mathbb{\overline{B}})$ be a viscosity solution of \eqref{eq:11} and $\mathop{\sup }\limits_{\mathbb{{B}}}\vert v \vert < +\infty$. If $f \in C( \overline{\mathbb B})\cap L^{\infty}(\mathbb{B})$, then
     \begin{equation}
         \mathop{\sup }\limits_{\mathbb{B}}\vert v \vert\leq \mathop{\sup }\limits_{\partial \mathbb{B}}\vert v \vert+C(K_0d_0)^{\frac{p}{p-1}}\mathop{\sup}\limits_{\mathbb{B}} {\vert\vert t^pf\vert\vert^{\frac{1}{p-1}}_{L^{\infty}(\Omega_{2\tilde{R}_{(t,x)}}\cap \mathbb{B})}},
     \end{equation}
     where $C$ depends on $n,\ p,\ \sigma,\ K_0,\ d_0$.
 \end{corollary}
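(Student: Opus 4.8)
The plan is to deduce the two‑sided bound from the one‑sided ABP estimate of Theorem \ref{T3} by applying that theorem once to $v$ and once to $-v$. Since $v\in C(\overline{\mathbb B})$ solves \eqref{eq:11} in the viscosity sense, it is in particular a viscosity subsolution, and $\sup_{\mathbb{B}}v^{+}\le \sup_{\mathbb{B}}|v|<+\infty$; hence Theorem \ref{T3} applies directly and gives
\[
\sup_{\mathbb{B}}v^{+}\le \sup_{\partial\mathbb{B}}v^{+} + C(K_0d_0)^{\frac{p}{p-1}}\sup_{\mathbb{B}}\big\|t^{p}f^{-}\big\|^{\frac1{p-1}}_{L^{\infty}(\Omega_{2\tilde{R}_{(t,x)}}\cap\mathbb{B})}.
\]

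Next I would exploit the symmetry of the operator. Writing the principal part of $F$ as
\[
\mathcal{A}[u]:=t^{-p}\,\mathrm{div}_{\mathbb{B}}\big(|\nabla_{\mathbb{B}}u|^{p-2}\nabla_{\mathbb{B}}u\big)+t^{-p}(n-p)|\nabla_{\mathbb{B}}u|^{p-2}(t\partial_{t}u),
\]
one checks that $\mathcal{A}$ is odd in its argument: replacing $u$ by $-u$ leaves the factor $|\nabla_{\mathbb{B}}u|^{p-2}$ unchanged while reversing both $\nabla_{\mathbb{B}}u$ and $t\partial_{t}u$, so $\mathcal{A}[-u]=-\mathcal{A}[u]$, and the same identity holds when $u$ is replaced by a $C^{2}$ test function. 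Because a test function $\varphi$ touches $-v$ from above at a point precisely when $-\varphi$ touches $v$ from below there, the supersolution inequality satisfied by $v$ for $\mathcal{A}[u]=f$ is exactly the subsolution inequality satisfied by $w:=-v$ for $\mathcal{A}[u]=-f$. Thus $w$ is a viscosity subsolution of \eqref{eq:11} with $f$ replaced by $-f$, and $w\in C(\overline{\mathbb B})\subset USC(\overline{\mathbb B})$ with $\sup_{\mathbb{B}}w^{+}=\sup_{\mathbb{B}}v^{-}\le\sup_{\mathbb{B}}|v|<+\infty$. Applying Theorem \ref{T3} to $w$ and using $(-f)^{-}=f^{+}$ gives
\[
\sup_{\mathbb{B}}v^{-}\le \sup_{\partial\mathbb{B}}v^{-} + C(K_0d_0)^{\frac{p}{p-1}}\sup_{\mathbb{B}}\big\|t^{p}f^{+}\big\|^{\frac1{p-1}}_{L^{\infty}(\Omega_{2\tilde{R}_{(t,x)}}\cap\mathbb{B})}.
\]

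Finally I would combine the two displays. Since $0\le f^{\pm}\le|f|$ on $\overline{\mathbb B}$ and $t\in(0,1)$, both weighted norms on the right are dominated by $\|t^{p}f\|_{L^{\infty}(\Omega_{2\tilde{R}_{(t,x)}}\cap\mathbb{B})}$; and from $|v|=\max\{v^{+},v^{-}\}$ one gets $\sup_{\mathbb{B}}|v|=\max\{\sup_{\mathbb{B}}v^{+},\sup_{\mathbb{B}}v^{-}\}$ and, likewise, $\sup_{\partial\mathbb{B}}|v|=\max\{\sup_{\partial\mathbb{B}}v^{+},\sup_{\partial\mathbb{B}}v^{-}\}$. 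Taking the larger of the two inequalities above then yields the claimed estimate, with the same structural constant $C=C(n,p,\sigma,K_0,d_0)$ coming from Theorem \ref{T3}. The only non‑routine point is the verification of the oddness of $F$ in the viscosity sense used to pass from $v$ to $-v$; everything else is immediate from Theorem \ref{T3} and elementary manipulation of positive and negative parts.
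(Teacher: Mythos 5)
Your proposal is correct and follows essentially the same route as the paper: apply Theorem~\ref{T3} directly to $v$ as a subsolution to bound $\sup v^{+}$, observe that $-v$ is a viscosity subsolution of \eqref{eq:11} with $f$ replaced by $-f$ (so $f^{-}$ becomes $f^{+}$) to bound $\sup v^{-}$, and combine. The only difference is that you spell out the oddness verification for the operator and the $\max$-combining step, which the paper leaves implicit.
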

\begin{Theorem}\label{C}
      Let $v\in C(\mathbb{\overline{B}})$  be a  viscosity solution of \eqref{H8}, $f \in C( \overline{\mathbb B})\cap L^{\infty}(\mathbb{B})$,  
     then  there exists constant $\alpha_1$ depending on $K_0,\ d_0,\ \sigma,\ n,\ p$ such that, for all $\rho \in (0,\alpha_1]$ the following estimate holds
     \begin{equation}\label{T1.10}
        ||v(t,x)||_{\rho,\overline{\mathbb B}}\leq C\mathop{\sup}\limits_{\mathbb{B}} {\vert\vert t^pf\vert\vert^{\frac{1}{p-1}}_{L^{\infty}(\Omega_{2\tilde{R}_{(t,x)}}\cap \mathbb{B})}},\\
\end{equation}
 where $C $ depends on $n,\ p,\  
      K_0d_0,\ \sigma,\ \rho$.
    Here the h\"older norm is
  \begin{equation}\label{eq:55}
    ||v(t,x)||_{\rho,\overline{\mathbb B}}=||v(t,x)||_{L^{\infty}(\overline{\mathbb B})}+\mathop{\sup}\limits_{\overline{\mathbb{B}}\times \overline{\mathbb{B}}} \frac{|v(t,x)-v(s,y)|}{|(t,x)-(s,y)|^{\rho}}.
    \end{equation}
   \end{Theorem}
   \begin{Theorem}\label{B}
 Let  $f \in C( \overline{\mathbb B})\cap L^{\infty}(\mathbb{B})$, then
     \eqref{H8} has at least one viscosity solution $v$ satisfying \eqref{T1.10}.
    \end{Theorem}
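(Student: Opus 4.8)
The plan is to realize $v$ as a locally uniform limit of solutions on the exhausting domains $H_j$ provided by Assumption~\ref{assumption2.2}, to pass the H\"older bound of Theorem~\ref{C} through this limit, and to recover the boundary condition on $\partial\mathbb B$ by a barrier argument built on the condition $(G^d_{\mathbb B})$. \emph{Step~1 (solving on $H_j$).} Each $H_j$ is bounded, has $\mathcal C^2$ boundary, and on $\overline{H_j}$ one has $t\ge\delta_j>0$, so the weight $t^{-p}$ is bounded and $F(\cdot,\nabla_\mathbb B\cdot,\nabla_\mathbb B^2\cdot)=0$ is a quasilinear $p$-Laplacian–type equation with bounded continuous coefficients on a smooth bounded domain. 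Since $\partial H_j$ satisfies a uniform exterior sphere condition, one constructs ordered continuous sub/supersolution barriers vanishing at each boundary point (powers of the distance to $\partial H_j$, with exponents and constants dictated by the $p$-growth of $F$ on $\overline{H_j}$), and combining these with Perron's method in the form of Ishii together with the comparison principle of this paper (the equivalent-transformation argument applies on $H_j$, since it only uses boundedness of the domain and continuity of $f$) yields a viscosity solution $v_j\in C(\overline{H_j})$ of $F=0$ in $H_j$ with $v_j=0$ on $\partial H_j$.

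\emph{Step~2 (uniform estimates and compactness).} Each $H_j$ satisfies $(G^d_{H_j})$, and an inspection of the proof of Theorem~\ref{C} shows that the constant $C$ there depends only on $n,p,\rho$ and on the geometric parameters $K_0d_0$, $\sigma$, which may be taken uniform along the exhaustion. Hence, for the fixed $\rho\in(0,\alpha_1]$,
$$\|v_j\|_{\rho,\overline{H_j}}\ \le\ C\,\sup_{\mathbb B}\ \|t^pf\|^{\frac1{p-1}}_{L^{\infty}(\Omega_{2\tilde{R}_{(t,x)}}\cap\mathbb B)}\ =:\ C_0,$$
independently of $j$. Thus $\{v_j\}$ is uniformly bounded and equi-H\"older on every compact subset of $\mathbb B$, so by Arzel\`a--Ascoli and a diagonal argument over a compact exhaustion of $\mathbb B$ there is a subsequence $v_{j_k}\to v$ locally uniformly in $\mathbb B$. \emph{Step~3 (limit).} Since $F$ is continuous on the open set $\mathbb B$ and degenerate elliptic, viscosity solutions are stable under local uniform convergence, so $v$ is a viscosity solution of $F=0$ in $\mathbb B$; passing the bounds of Step~2 to the limit along pairs of points lying in a common $H_{j_k}$ gives that $v$ is uniformly H\"older on $\mathbb B$ with $\|v\|_{\rho,\mathbb B}\le C_0$, hence $v$ extends to $\overline{\mathbb B}$ with $\|v\|_{\rho,\overline{\mathbb B}}\le C_0$, which is \eqref{T1.10}. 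This bound also controls $v$ near the conical point $t=0$ (where $t^pf\to0$), consistently with the fact that $\{0\}\times\overline X$ does not belong to $\partial\mathbb B$ and carries no boundary condition.

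\emph{Step~4 (boundary values).} Fix $(t_0,x_0)\in\partial\mathbb B$. Using $(G^d_{\mathbb B})$, exactly as in the boundary estimate inside the proof of Theorem~\ref{C}, one produces for each $\varepsilon>0$ a radius $r>0$ and a supersolution $\psi_\varepsilon$ of $F=0$ on $\mathbb B\cap B_r(t_0,x_0)$ with $\psi_\varepsilon\ge0$, $\psi_\varepsilon\ge C_0$ on $\overline{\mathbb B}\cap\partial B_r(t_0,x_0)$, and $\psi_\varepsilon(t_0,x_0)\le\varepsilon$. For $k$ large, $\partial(H_{j_k}\cap B_r)\subseteq(\partial H_{j_k}\cap\overline{B_r})\cup(\overline{H_{j_k}}\cap\partial B_r)$, and on the first set $v_{j_k}=0\le\psi_\varepsilon$ while on the second set $v_{j_k}\le C_0\le\psi_\varepsilon$; the comparison principle on $H_{j_k}\cap B_r$ then gives $v_{j_k}\le\psi_\varepsilon$ there, and letting $k\to\infty$ yields $v\le\psi_\varepsilon$ on $\mathbb B\cap B_r$, so $\limsup_{(t,x)\to(t_0,x_0)}v(t,x)\le\varepsilon$. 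A symmetric argument with a subsolution barrier gives $\liminf_{(t,x)\to(t_0,x_0)}v(t,x)\ge-\varepsilon$. As $\varepsilon$ is arbitrary, $v(t_0,x_0)=0$, so $v$ solves \eqref{H8} and satisfies \eqref{T1.10}.

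\emph{Expected main obstacle.} The heart of the argument is the barrier analysis: the ordered boundary barriers on the $\mathcal C^2$ domains $H_j$ needed for Step~1, and — more delicately — the barrier $\psi_\varepsilon$ at points of $\partial\mathbb B$ constructed from the weak exterior-measure hypothesis $(G^d_{\mathbb B})$ in Step~4, all while carrying along the degenerate weight $t^{-p}$ and the $p$-growth of $F$; together with the verification that the constant in Theorem~\ref{C} is genuinely uniform along the exhaustion $\{H_j\}$, which is what makes the compactness in Step~2 and the limit bound in Step~3 legitimate.
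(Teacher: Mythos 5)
Your Steps 1--3 follow essentially the same route as the paper: solve the Dirichlet problem on the exhausting smooth domains $H_j$ (the paper simply invokes \cite{IF22}, Proposition~1, rather than re-deriving Perron's method with barriers), apply Theorem~\ref{C} for a $j$-uniform H\"older bound, extract a locally uniform limit, and use stability of viscosity solutions (the paper cites \cite{GPA}, Proposition~4.1). That part of the proposal is sound.

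Step~4, however, contains a gap and is also unnecessary. You claim that ``exactly as in the boundary estimate inside the proof of Theorem~\ref{C}'' one produces a supersolution barrier $\psi_\varepsilon$ of $F=0$ near a boundary point from the condition $(G^d_{\mathbb B})$. But the proof of Theorem~\ref{C} builds no such barrier: it works via the ABP/measure estimate of Theorem~\ref{T3} and Lemma~\ref{lemma3.31} (weak Harnack), precisely because the exterior-measure hypothesis $(G^d_{\mathbb B})$ is too weak to hand you an explicit supersolution vanishing at the boundary; it does not give an exterior cone or sphere condition. So the barrier $\psi_\varepsilon$ is not established, and it would take real work to construct it.

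Fortunately, nothing like a barrier is needed, and you already have the ingredients in your Step~3. The uniform bound $\|u_j\|_{\rho,\overline{H}_j}\le C_0$ from Theorem~\ref{C} holds up to and including $\partial H_j$, where $u_j\equiv 0$. Fix $(t_0,x_0)\in\mathbb B$ at distance $\eta$ from $\partial\mathbb B$. For $j$ large enough, $\partial H_j$ contains a point $(s',y')$ at distance $\lesssim\eta$ from $(t_0,x_0)$ (since $H_j\uparrow\mathbb B$), and then
\[
|u_j(t_0,x_0)|=|u_j(t_0,x_0)-u_j(s',y')|\le C_0\,|(t_0,x_0)-(s',y')|^{\rho}\lesssim C_0\,\eta^{\rho}.
\]
Passing to the limit along the convergent subsequence gives $|v(t_0,x_0)|\lesssim C_0\,\eta^{\rho}$, hence $v\to 0$ as $(t_0,x_0)\to\partial\mathbb B$, and one sets $v|_{\partial\mathbb B}=0$. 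This is exactly what the paper does, and it replaces your Step~4 without any new barrier or comparison-principle argument.
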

\begin{Theorem}[Comparison principle]\label{A}
    Let $u\in USC(\overline{\mathbb B})$ be  a viscosity subsolution of equation \eqref{eq:11} and $\mathop{\lim}\limits_{t \to 0} \sup u$ exists; let $v\in LSC(\overline{\mathbb B})$ be a viscosity supersolution of equation \eqref{eq:11} and $\mathop{\lim}\limits_{t \to 0} \inf v$ exists. In addition, $\mathop{\lim}\limits_{t \to 0}\sup u\leq \mathop{\lim}\limits_{t \to 0}\inf v$,
    and $v(t,x)$ is bounded from above. If $f\in C(\overline{\mathbb B})$ and there exists constant $\omega>0$ such that $t^pf(t,x)\ge \omega$ for all $(t,x)\in \mathbb B$. Then we can deduce $u\le v$ in $\mathbb B$ when $u\le v$ on $\partial \mathbb B$.
\end{Theorem}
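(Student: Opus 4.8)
My plan is to overcome the absence of a zeroth‑order term in \eqref{H8} by an \emph{equivalent transformation} of the unknown which, using the hypothesis $t^{p}f\ge\omega>0$, produces a strictly monotone zeroth‑order term, after which the standard viscosity comparison machinery applies. First I would fix a constant $B$ exceeding both $\sup_{\overline{\mathbb B}}u$ — finite since $u\in USC(\overline{\mathbb B})$ and $\overline{\mathbb B}$ is compact — and $\sup_{\overline{\mathbb B}}v$, finite by hypothesis, and set $\phi(r):=B-e^{-r}$, a $C^{\infty}$, strictly increasing, concave bijection of $\mathbb R$ onto $(-\infty,B)$. Put $\tilde u:=\phi^{-1}(u)=-\log(B-u)\in USC(\overline{\mathbb B})$ and $\tilde v:=\phi^{-1}(v)=-\log(B-v)\in LSC(\overline{\mathbb B})$. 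Since $\phi\in C^{2}$ is strictly increasing, the elementary fact that composition with a monotone $C^{2}$ change of the dependent variable maps viscosity sub/supersolutions to viscosity sub/supersolutions applies (a test function touching $\tilde u$ from above is carried by $\phi$ to one touching $u$ from above, and conversely). Multiplying \eqref{H8} by $t^{p}>0$, substituting $u=\phi(\tilde u)$, and using the chain rule together with $\phi''/\phi'\equiv-1$ and $(\phi')^{p-1}=e^{-(p-1)w}$, I find that $\tilde u$, resp. $\tilde v$, is a viscosity subsolution, resp. supersolution, of
$$\operatorname{div}_{\mathbb B}\!\left(|\nabla_{\mathbb B}w|^{p-2}\nabla_{\mathbb B}w\right)+(n-p)|\nabla_{\mathbb B}w|^{p-2}(t\partial_{t}w)-(p-1)|\nabla_{\mathbb B}w|^{p}-t^{p}f(t,x)\,e^{(p-1)w}=0.$$
This equation is degenerate elliptic and its zeroth‑order term $-t^{p}f\,e^{(p-1)w}$ is strictly decreasing in $w$, with $w$‑derivative $\le-(p-1)\omega\,e^{(p-1)w}$ — here the uniform positivity $t^{p}f\ge\omega$ is used. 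Thus the transformed problem is proper and, on compact subdomains of $\mathbb B$, falls within the scope of Ishii‑type comparison results \cite{HI26} and their extensions \cite{HP2,MHP}.

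Next I would transfer the boundary and singular conditions: since $\phi^{-1}$ is continuous and increasing, $u\le v$ on $\partial\mathbb B$ gives $\tilde u\le\tilde v$ on $\partial\mathbb B$, and $\lim_{t\to0}\sup u\le\lim_{t\to0}\inf v$ gives $\lim_{t\to0}\sup\tilde u\le\lim_{t\to0}\inf\tilde v$. Suppose, for contradiction, that $M_{0}:=\sup_{\overline{\mathbb B}}(\tilde u-\tilde v)>0$. On $\partial\mathbb B$ one has $\tilde u-\tilde v\le0$, and as $t\to0$ one has $\limsup(\tilde u-\tilde v)\le0$; hence, $\tilde u-\tilde v$ being USC on the compact set $\overline{\mathbb B}$, the positive supremum is attained at a point $(\bar t,\bar x)$ with $\bar t\in(0,1)$ and $\bar x\in X$, lying in a compact subset of $\mathbb B$. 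It is this compact localization — made possible precisely by the two $\lim_{t\to0}$ hypotheses — that removes the conical singularity from the rest of the argument.

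Then I would carry out the Crandall--Ishii doubling of variables at $(\bar t,\bar x)$, most transparently in the coordinate $\tau=\log t$, in which $\nabla_{\mathbb B}$ is the Euclidean gradient and $\nabla^{2}_{\mathbb B}$ the Euclidean Hessian in $(\tau,x)$ and the transformed equation reads
$$\Delta_{p}^{(\tau,x)}w+(n-p)|\nabla w|^{p-2}\partial_{\tau}w-(p-1)|\nabla w|^{p}-e^{p\tau}f\,e^{(p-1)w}=0,$$
all of whose terms but the last have \emph{constant} coefficients. Penalizing $\tilde u(\tau,x)-\tilde v(\sigma,y)-\tfrac{1}{2\varepsilon}|(\tau,x)-(\sigma,y)|^{2}$, with maximizers $(\tau_{\varepsilon},x_{\varepsilon}),(\sigma_{\varepsilon},y_{\varepsilon})\to(\bar\tau,\bar x)$, and applying Ishii's lemma to get $(q_{\varepsilon},X_{\varepsilon})\in\overline{J}^{2,+}\tilde u$, $(q_{\varepsilon},Y_{\varepsilon})\in\overline{J}^{2,-}\tilde v$ with $X_{\varepsilon}\le Y_{\varepsilon}$, the subtraction of the sub‑ and supersolution inequalities makes the drift term and the $-(p-1)|q_{\varepsilon}|^{p}$ term cancel identically — same gradient, constant coefficients, so no bound on $|q_{\varepsilon}|$ is needed — makes the $p$‑Laplacian terms contribute $\le 0$ (by $X_{\varepsilon}\le Y_{\varepsilon}$ and positive semidefiniteness of the $p$‑Laplace coefficient matrix), and leaves only
$$e^{p\sigma_{\varepsilon}}f(\sigma_{\varepsilon},y_{\varepsilon})\,e^{(p-1)\tilde v(\sigma_{\varepsilon},y_{\varepsilon})}\ \ge\ e^{p\tau_{\varepsilon}}f(\tau_{\varepsilon},x_{\varepsilon})\,e^{(p-1)\tilde u(\tau_{\varepsilon},x_{\varepsilon})}.$$
Using the standard facts that $\tilde u(\tau_{\varepsilon},x_{\varepsilon})\to\tilde u(\bar\tau,\bar x)$, $\tilde v(\sigma_{\varepsilon},y_{\varepsilon})\to\tilde v(\bar\tau,\bar x)$ with $\tilde u(\bar\tau,\bar x)-\tilde v(\bar\tau,\bar x)=M_{0}$, the continuity of $f$, and $e^{p\bar\tau}f(\bar\tau,\bar x)=\bar t^{\,p}f(\bar t,\bar x)\ge\omega>0$, letting $\varepsilon\to0$ forces $e^{(p-1)\tilde u(\bar\tau,\bar x)}\le e^{(p-1)\tilde v(\bar\tau,\bar x)}$, i.e. $M_{0}\le 0$, a contradiction. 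Hence $\tilde u\le\tilde v$ on $\overline{\mathbb B}$, and therefore $u=\phi(\tilde u)\le\phi(\tilde v)=v$, in particular in $\mathbb B$.

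I expect the main obstacle to be the first step: designing a transformation that converts the zeroth‑order‑free equation into a proper one while staying inside the class of viscosity solutions. The remaining subtleties are the compact localization above (which must genuinely use the two $\lim_{t\to0}$ assumptions) and the customary care at contact points where $q_{\varepsilon}=0$: for $p\ge2$ no such contact can occur for a subsolution of the transformed equation, since there its left‑hand side reduces to the strictly negative $-t^{p}f\,e^{(p-1)w}$; for $1<p<2$ one uses the restricted‑test‑function definition together with a small perturbation of the penalization to exclude it.
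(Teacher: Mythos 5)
Your Step 1 is essentially the paper's Step 1: the paper sets $\psi(s)=K\int_0^s e^{-r}\,dr=K(1-e^{-s})$ with $K=2\max\{|M_u|,|M_v|,|m_v|\}$ and $u=\psi(z_1)$, $v=\psi(z_2)$, which is exactly your $\phi(r)=B-e^{-r}$ up to the choice of normalization constant, and both exploit $\psi''/\psi'\equiv-1$ to produce the strictly monotone zeroth‑order term $-\,t^pf\,e^{(p-1)z}/K^{p-1}$. Where you genuinely diverge is Step 2. The paper does not invoke the Crandall--Ishii theorem of sums; instead it builds the comparison machinery from scratch in the $\nabla_{\mathbb B}$‑framework using the upper $\epsilon$‑envelope (their Section 2 definition, Lemma \ref{Proposition3.1}, Proposition \ref{Proposition3.2}, Lemmas \ref{Lemma3.1}--\ref{Lemma3.2}, and Proposition \ref{P2}), i.e. Jensen--Ishii sup‑convolutions and Jensen's lemma, and then runs the doubling of variables with the penalization $(\ln\frac{t}{s})^2+|x-y|^2$ directly in $(t,x)$‑coordinates, splitting the resulting quantity $J$ into five pieces $J_1+\dots+J_5$. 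You instead pass once and for all to $\tau=\log t$ — precisely the change of variable the paper uses in Remark 2.2 to justify its $\nabla_{\mathbb B}$‑viscosity notion — and then cite the standard semijet version of Ishii's lemma from \cite{MHP}; after cancellation of the drift and $|q|^p$ terms and the sign of $\mathrm{tr}\bigl(Q(X-Y)\bigr)$, the zeroth‑order monotonicity closes the contradiction. Your route is more economical in that it leans on the off‑the‑shelf theorem of sums rather than re‑deriving $\epsilon$‑envelope technology in cone coordinates; the paper's route is self‑contained in the $\nabla_{\mathbb B}$‑framework and makes the conical structure explicit. Both are sound.

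Two small points worth tightening. First, you take $M_0=\sup_{\overline{\mathbb B}}(\tilde u-\tilde v)$ and then argue the maximizer is interior; but upper semicontinuity of $u$ gives $u(0,x)\ge\limsup_{t\to0}u(t,x)$, so the value at the tip $\{0\}\times X$ is not controlled by the hypothesis $\lim_{t\to0}\sup u\le\lim_{t\to0}\inf v$, which constrains only the limits from inside $\mathbb B$. You should take the supremum over the open domain $\mathbb B$ (as the paper does, via its penalized $M_\alpha$), so that positivity of the sup together with $\tilde u-\tilde v\le 0$ near $\partial\mathbb B$ and $\limsup_{t\to0}(\tilde u-\tilde v)\le 0$ localizes the maximizing sequence in a compact subset of $\mathbb B$; the conclusion $u\le v$ is stated in $\mathbb B$ anyway. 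Second, the paper works with $p\ge 2$ (see Remark \ref{R1}), so your closing remark about $1<p<2$ and restricted test functions, while prudent, is not needed here; for $p>2$ your observation that a subsolution of the transformed equation cannot be touched from above with zero gradient is exactly the right disposal of the $q_\varepsilon=0$ case, and for $p=2$ the operator is uniformly elliptic and the issue does not arise.
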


\begin{Remark}\label{Remark 3.1}
From the  proof process of Theorem \ref{A} , we can see that the assumption of $ f$ can be appropriately
weakened. It is easy to see that when $\alpha\to \infty $, there exists a subsequence $\{(t_{\alpha_j},x_{\alpha_j})\}$ converging to a point $(z_0^*,z^*)\in \mathbb{B}$. We only need $f$ to satisfy 
$$\lim \limits_{(t,x)\to (z_0^*,z^*)}t^pf=(z_0^*)^{p}f(z_0^*,z^*)\neq0.$$
\end{Remark}
\begin{Theorem}\label{Theorem 1.5}
If $u\in L^{\infty}(\mathbb{B})$ is a viscosity supersolution to \eqref{eq:11} and $f\in C(\mathbb{B})$, then  $u$ is a  weak supersolution to \eqref{eq:11}, and $u\in\mathbb{H}_{p}^{1,\frac{n-p}{p}}(\mathbb{B})$.
\end{Theorem}
\begin{Theorem}\label{Theorem 1.6}
If $u\in L^{\infty}(\mathbb{B})$ is a viscosity solution to \eqref{eq:11} and $f\in C(\mathbb{B})$, then  $u$ is a  weak solution to \eqref{eq:11}.
\end{Theorem}

We highlight 
that the degenerate p-Laplace equation studied in this paper is a generalization and further development of classic non-degenerate p-Laplace equations, and other existing degenerate p-Laplace equations. Firstly, our model that the cone degenerate p-Laplace equation is new, comparing to the problems studied in \cite{CI,IF,IFA,IIV}. Secondly, the characteristic and innovation of this paper lie in the fact that the gradient operator 
itself is degenerate, which originates from its geometric background. Thirdly, the absence of  zero order term and the cone type degeneracy in this equation pose difficulties in the analysis process. Finally, we  solve these problems by establishing a  new distance space and structuring a special equivalent transformation.

 This paper is organized as follows. In Section 2, We mainly do some preparatory works, presenting some definitions and remarks.  In Section 3, we give the proofs of  Theorem \ref{T3} and Theorem \ref{C} to obtain the ABP and H\"older estimates.
  Furthermore, we provide a proof of Theorem \ref{B} and obtain the existence of  solutions for \eqref{H8}.
  In Section 4, we prove  Theorem \ref{A} to obtain the comparison principle of the viscosity solutions.
 In Section 5, we prove  Theorem \ref{Theorem 1.5} to obtain the  relationship between  viscosity solutions and weak solutions of \eqref{eq:11}.

\section{Preliminaries}
In this chapter, we will make some preparations by providing some definitions and remarks.
In order to express the weak solutions for \eqref{eq:11}, we need
the adequate distribution spaces. To define the weighted Sobolev spaces on the stretched cone $\mathbb{B}$, we first introduce the weighed
Sobolev spaces and weighted $\mathbb{L}^\gamma_p$ spaces on $\R_+^n$.
In addition, we will also provide the definition of viscosity  solutions and other preparatory content.


\begin{definition}\label{def-1}
For the weight data $\gamma \in \R$ and $(t,x)\in \mathbb{B}$, we say that $u(t,x)\in
\mathbb{L}_p^\gamma(\mathbb{B})$ if $u\in\mathcal{D}^\prime(\mathbb{B})$ and
$$\| u\|_{\mathbb{L}_p^\gamma(\mathbb{B})}=\big(\int_{\mathbb{B}}
|{\left(t(1-t)d(x,\partial X)\right)}^{\frac{n}{p}-\gamma}u(t,x)|^p \frac{dt}{t}dx\big)^{\frac{1}{p}}<
+\infty.$$
\end{definition}
\begin{definition}
For $m\in \mathbb{N}$, and $\gamma\in \mathbb{R}$, the spaces
$$\mathbb{H}^{m,\gamma}_p(\mathbb{B}):=\Big\{u\in\mathcal{D}^\prime(\mathbb{B}):
 (t\partial_{t})^\alpha\partial_{x}^\beta u \in \mathbb{L}_p^\gamma(\mathbb{B})\Big\},$$ for arbitrary $\alpha\in
\mathbb{N}$, $\beta\in \mathbb{N}^{\mathbb{N}-1}$, and $\alpha+|\beta|\leq m$.
 Denote $\mathbb{H}^{m,\gamma}_{p,0}(\mathbb{B})$ as the subspace  $\mathbb{H}^{m,\gamma}_p(\mathbb{B})$ which is defined as the closure of $C_{0}^{\infty}(\mathbb{B})$ with  respect to the norm $||\cdot||_{\mathbb{H}^{m,\gamma}_p(\mathbb{B})}$.
\end{definition}
\begin{Remark}
$\mathbb{H}^{m,\gamma}_p(\mathbb{B})$ is a Banach space.
\end{Remark}
We only state completeness here. Let $\{u_j\}$ is a
arbitrary Cauchy sequence in $\mathbb{H}^{m,\gamma}_p(\mathbb{B})$, then $\{u_j\}$  and   $\{(t\partial_{t})^\alpha\partial_{x}^\beta u_j \}$ are  Cauchy sequences in  $\mathbb{L}_p^\gamma(\mathbb{B})$.
By Definition \ref{def-1}, ${\left(t(1-t)d(x,\partial X)\right)}^{\frac{n}{p}-\gamma}u_j t^{-\frac{1}{p}}$ is a Cauchy sequence in $L^{p}(\mathbb{B})$, then $${\left(t(1-t)d(x,\partial X)\right)}^{\frac{n}{p}-\gamma}u_j t^{-\frac{1}{p}}\to \varphi $$ in $L^{p}(\mathbb{B})$. Then we have $u_j \to u$ in $\mathbb{L}_p^\gamma(\mathbb{B})$ with $$u=\varphi \times \left({\left(t(1-t)d(x,\partial X)\right)}^{\frac{n}{p}-\gamma} t^{-\frac{1}{p}}\right)^{-1}.$$
Similarly $(t\partial_{t})^\alpha\partial_{x}^\beta u_j \to v$ in $\mathbb{L}_p^\gamma(\mathbb{B})$.

For all $\psi\in C_{0}^{\infty}(\mathbb{B})$, $\alpha\in
\mathbb{N}$, $\beta\in \mathbb{N}^{\mathbb{N}-1}$, and $\alpha+|\beta|\leq m$ , $$\int_{\mathbb{B}}u_j (t\partial_{t})^\alpha\partial_{x}^\beta \psi \frac{dt}{t}dx={-1}^{\alpha+|\beta|}\int_{\mathbb{B}} (t\partial_{t})^\alpha\partial_{x}^\beta u_j  \psi\frac{dt}{t}dx.$$
 Since
$$\begin{aligned}
&|\int_{\mathbb{B}}(u_j-u)(t\partial_{t})^\alpha\partial_{x}^\beta\psi \frac{dt}{t}dx|\\
\leq & \int_{\mathbb{B}}|u_j-u|^{p}{\left(t(1-t)d(x,\partial X)\right)}^{(\frac{n}{p}-\gamma)p}\frac{dt}{t}dx\\ \cdot & \int_{\mathbb{B}} |(t\partial_{t})^\alpha\partial_{x}^\beta\psi| ^{q}{\left(t(1-t)d(x,\partial X)\right)}^{-(\frac{n}{p}-\gamma)q}\frac{dt}{t}dx,
  \end{aligned}$$
then $$\int_{\mathbb{B}}u_j (t\partial_{t})^\alpha\partial_{x}^\beta \psi \frac{dt}{t}dx \to \int_{\mathbb{B}}u (t\partial_{t})^\alpha\partial_{x}^\beta \psi \frac{dt}{t}dx . $$
Similarly,
 $$\int_{\mathbb{B}}(t\partial_{t})^\alpha\partial_{x}^\beta u_j \psi \frac{dt}{t}dx \to \int_{\mathbb{B}}v \psi \frac{dt}{t}dx ,$$
 then $$\int_{\mathbb{B}}u (t\partial_{t})^\alpha\partial_{x}^\beta \psi \frac{dt}{t}dx ={-1}^{\alpha+|\beta|}\int_{\mathbb{B}}v \psi \frac{dt}{t}dx,$$
 that is, $ (t\partial_{t})^\alpha\partial_{x}^\beta u_j = v$.
 So we have $u_j\to u $  in $\mathbb{H}^{m,\gamma}_p(\mathbb{B})$.

Through simple calculations, we get the divergence form of \eqref{eq:11}
$$ div_ \mathbb{B}(t^{-p}|\nabla_ \mathbb{B}u|^{p-2}\nabla_ \mathbb{B}u)
    +nt^{-p}|\nabla_ \mathbb{B}u|^{p-2}(t\partial_t u)=f(t,x),$$
    then we can define the weak supersolution of \eqref{eq:11} as following.
\begin{definition}[weak solution] \label{D2}A function $ u \in \mathbb{H}^{1,\frac{n-p}{p}}_p(\mathbb{B})$ is a weak supersolution (resp. subsolution) to \eqref{eq:11} if
\begin{equation}\label{D2.2}
\int \limits_{\mathbb{B}}t^{-p}|\nabla_ \mathbb{B}u|^{p-2}\nabla_ \mathbb{B}u\cdot \nabla_ \mathbb{B}\psi \frac{dt}{t}dx\geq (resp.\leq)\int \limits_{\mathbb{B}}(-f+nt^{-p}|\nabla_ \mathbb{B}u|^{p-2}(t\partial _{t} u) )\psi \frac{dt}{t}dx\end{equation}
for all non-negative $\psi\in \mathcal{C}_{0}^{\infty}(\mathbb{B}).$
\end{definition}
Next, we provide an equivalent definition of weak solutions, and provide a brief proof. In the following calculations, we will use the following  definition.
\begin{definition}[weak solution]\label{D3}A function $u\in\mathbb{H}^{m,\gamma}_p(\mathbb{B})$ is a weak supersolution (resp. subsolution) to \eqref{eq:11} if
\begin{equation}\label{D2.3}
\int \limits_{\mathbb{B}}|\nabla_ \mathbb{B}u|^{p-2}\nabla_ \mathbb{B}u\cdot \nabla_ \mathbb{B}\psi \frac{dt}{t}dx\geq (resp.\leq)\int \limits_{\mathbb{B}}(-t^pf+(n-p)|\nabla_ \mathbb{B}u|^{p-2}(t\partial _{t} u) )\psi \frac{dt}{t}dx\end{equation}
for all non-negative $\psi\in \mathcal{C}_{0}^{\infty}(\mathbb{B}).$
\end{definition}
\begin{proof}
Assume $u\in\mathbb{H}^{m,\gamma}_p(\mathbb{B}) $  satisies \eqref{D2.2} for all non-negative $\psi\in \mathcal{C}_{0}^{\infty}(\mathbb{B})$, since $t^{p}\psi\in \mathcal{C}_{0}^{\infty}(\mathbb{B})$, so \eqref{D2.3} holds. Conversely, if $u\in\mathbb{H}^{m,\gamma}_p(\mathbb{B}) $  satisies \eqref{D2.3} for all non-negative $\psi\in \mathcal{C}_{0}^{\infty}(\mathbb{B}),$ since $t^{-p}\psi\in \mathcal{C}_{0}^{\infty}(\mathbb{B})$, so we can get \eqref{D2.2}.
\end{proof}
\begin{definition}[Viscosity solution]\label{D1}

 Suppose  {for any} $A\ge B$,
 $$G((t,x),w,P,A)  \ge G((t,x),w,P,B),$$
  a function $u\in LSC(\mathbb{B})(\text{resp}.u\in USC(\mathbb{B}))$ is called a viscosity supersolution $(\text{resp.subsolution})$
of equation \eqref{H1}
\begin{equation} \label{H1}
    G((t,x),u,\nabla_\mathbb{B}u,\nabla_\mathbb{B}^2u)=0,
\end{equation}
 if for all ${\phi}\in C^2(\mathbb{B})$  the following inequality holds at each local minimum $($resp.
maximum$)$ point $(t_0,x_0)\in \mathbb{B}$ of $u-{\phi}$
$$G((t_0,x_0),u(t_0,x_0),\nabla_\mathbb{B}{\phi}(t_0,x_0),\nabla_\mathbb{B}^2{\phi}(t_0,x_0)) \le (resp. \geq) 0.$$
We call that $u$ is a viscosity solution of \eqref{H1} when it is both a subsolution and a supersolution.
\end{definition}

\begin{Remark}
 Let us verify that when $u$ is classical supersolution of $\eqref{H1}$ $($that is $u\in C^2(\mathbb{B})$ and $G\leq 0$ holds for all $(t,x)\in \mathbb{B})$, the above definition is reasonable. When $u\in C^2(\mathbb{B})$, let $t=e^a$, then $u(t,x)=u(e^a,x)$. We define $\overline u(a,x)=u(e^a,x)$, then $$\nabla\overline u(a,x)=\nabla_\mathbb{B}u(t,x),\nabla^2\overline u(a,x)=\nabla_\mathbb{B}^2 u(t,x).$$ It is indeed because
\begin{equation}
    \begin{cases}\partial_a \overline u=\partial_t u\cdot e^a=t\partial_t u, \\
    \partial_x \overline u=\partial_x u,
    \end {cases}
\end{equation}
\begin{equation}
        \begin {cases}
\partial^2_a \overline u=\partial_a (t\partial_t u)=t\partial_t(t \partial_t u)=(t\partial_t)^2u, \\
\partial_a(\partial_x \overline u)=\partial_a(\partial_x u)=t\partial_t(\partial_x u), \\
\partial_x(\partial_a \overline u)=\partial_x(t\partial_t u)=\partial_x(t\partial_t u), \\
\partial^2_x \overline u=\partial^2_x u,
\end {cases}	
\end{equation}
where
$\nabla_\mathbb{B}^2 u(t,x)=\begin{pmatrix}(t\partial_t)^2u & (t\partial_t)\partial_xu \\\partial_x(t\partial_t)u & \partial_x^2u\end{pmatrix}$.

Given $(t_0,x_0) \in \mathbb{B},\ \phi \in C^2(\mathbb{B}),$ $u-\phi$ attains local minimum at $(t_0,x_0)$. Let $\ln t_0=a_0$, 
  then $u(e^a,x)-\phi(e^a,x)$ attains local minimum at $(a_0,x_0)$ and $(\overline u-\overline \phi)(a,x)$ attains local minimum at $(a_0,x_0)$.
  So we can get $$\nabla \overline u(a_0,x_0)=\nabla \overline \phi(a_0,x_0),\ \nabla^2\overline u(a_0,x_0)\ge \nabla^2\overline \phi(a_0,x_0),$$
   that is, $$\nabla_\mathbb{B} u(t_0,x_0)=\nabla_\mathbb{B} \phi(t_0,x_0),\ \nabla^2_\mathbb{B} u(t_0,x_0)\ge \nabla^2_\mathbb{B} \phi(t_0,x_0).$$ We know that $$G((t_0,x_0),u(t_0,x_0),\nabla_\mathbb{B}u(t_0,x_0),\nabla_\mathbb{B}^2u(t_0,x_0)) \leq 0,$$  so the following inequality holds,
$$G((t_0,x_0),u(t_0,x_0),\nabla_\mathbb{B}{\phi}(t_0,x_0),\nabla_\mathbb{B}^2{\phi}(t_0,x_0)) \le0.$$
\end{Remark}
\begin{definition}[Pucci operators]\label{D11}
    \begin{equation}
         \mathcal{M}^+_{\lambda,\Lambda}(X)=\sup \limits_{\lambda I\le A\le \Lambda I} tr(AX)=\Lambda \sum \limits_{e_i>0} e_i+\lambda \sum\limits_{e_i<0} e_i,
         \end{equation}
         \begin{equation}
        \mathcal{M}^-_{\lambda,\Lambda}(X)=\inf \limits_{\lambda I\le A\le \Lambda I} tr(AX)=\lambda \sum \limits_{e_i>0} e_i+\Lambda \sum\limits_{e_i<0} e_i,
        \end{equation}
where $X$ is matrix and $\{e_i\}$ are eigenvalues of $X$.
\end{definition}
\begin{definition}[Distance on cone \cite{CLT}]\label{cone 9}
Since the distance on cone is $ds^2=\frac{1}{t^2}(
dt)^2+\Sigma_{i=1}^{n-1}(dx_i)^2$, we obtain the distance between point $z=(t,x_1,\cdot\cdot\cdot,x_{n-1})$ and $z_0=(t_0,x_1^0,\cdot\cdot\cdot,x_{n-1}^0)$ on cone is
 \begin{equation}
  d(z,z_0)=\sqrt{(\ln t-\ln t_0)^2+\Sigma_{i=1}^{n-1}(x_i-x_i^0)^2},
  \end{equation}
and we define $|z_0-z|=d(z_0,z)$.

For simplicity, we introduce the open ``ball'' in $ \mathbb{R}_{+}^{n} $ in the sense of measure $\frac{dt}{t}dx$ with center $w=(s,y)=(s,y_1,\cdots,y_{n-1})\in \mathbb{R}_{+}^{n}$ and radius r as follows.
\begin{equation}
\Omega_r(s,y):= \left\{ (t,x) \in R_+^n\ \big{|}\ (lnt-lns)^2+\Sigma_{i=1}^{n-1}(x_i-y_i)^2<r^2\right\}.
 \end{equation}
\end{definition}
\begin{definition}[upper $\epsilon$-envelope]If u is a real-valued u.s.c. function on $\mathbb{B}$, then we call the upper $\epsilon$-envelope of u on $\mathbb B_{\epsilon}$ by
 \begin{equation}
 u^\epsilon(t,x)=\max\left\{u(s,y)+\sqrt{\epsilon^2-|(e^t,x)-(e^s,y)|^2}\ \big{|}\ |(e^t,x)-(e^s,y)|\leq \epsilon
 \right\},
   \end{equation}
   with
    \begin{equation}\mathbb B_{\epsilon}=\{(t,x)\in  \mathbb B\ \big{|}\ |(e^t,x)-(e^s,y)|^2>\epsilon^2,\ \text{for all}\ (s,y)\in \partial  \mathbb B \cup \{0\}\times \partial X\}.\end{equation}
    \end{definition}


Next, for convenience, we will simplify $\eqref{eq:11}$ taking two-dimensional as an example,
    \begin{align*}
    & div_\mathbb{B}(|\nabla_\mathbb{B}u|^{p-2}\nabla_\mathbb{B}u)
     \\& = t\partial_t(|\nabla_\mathbb{B}u|^{p-2}(t\partial_tu))+\partial_x(|\nabla_\mathbb{B}u|^{p-2}(\partial_xu)) \\
     & =|\nabla_\mathbb{B}u|^{p-2} (t\partial_t)^2u+(p-2)t\partial_tu|\nabla_\mathbb{B}u|^{p-3}\frac{\nabla_\mathbb{B}u}{|\nabla_\mathbb{B}u|}\cdot ((t\partial_t)^2u,(t\partial_t)\partial_xu)\\
      & +|\nabla_\mathbb{B}u|^{p-2} \partial_x^2u+(p-2)\partial_xu|\nabla_\mathbb{B}u|^{p-3}\frac{\nabla_\mathbb{B}u}{|\nabla_\mathbb{B}u|}\cdot (\partial_x(t\partial_t)u,\partial_x^2u ) \\
      &=|\nabla_\mathbb{B}u|^{p-2}tr\begin{pmatrix}
      (t\partial_t)^2u & (t\partial_t)\partial_xu \\
       \partial_x(t\partial_t)u & \partial_x^2u
       \end{pmatrix}\\
     &+(p-2)|\nabla_\mathbb{B}u|^{p-4}t\partial_tu(t\partial_tu\cdot (t\partial_t)^2u+\partial_xu\cdot (t\partial_t)\partial_xu)\\
     &+(p-2)|\nabla_\mathbb{B}u|^{p-4}\partial_xu(t\partial_tu\cdot \partial_x(t\partial_t)u+\partial_xu\cdot \partial_x^2u)\\
     &=|\nabla_\mathbb{B}u|^{p-2}tr\begin{pmatrix}
     (t\partial_t)^2u & (t\partial_t)\partial_xu \\
     \partial_x(t\partial_t)u & \partial_x^2u
     \end{pmatrix}\\
     &+(p-2)|\nabla_\mathbb{B}u|^{p-4}
     tr\begin{pmatrix}
    (t\partial_tu)^2 & t\partial_tu\partial_xu \\
    t\partial_tu\partial_xu & (\partial_xu)^2
    \end{pmatrix}\begin{pmatrix}
    (t\partial_t)^2u & (t\partial_t)\partial_xu \\
    \partial_x(t\partial_t)u & \partial_x^2u
     \end{pmatrix} \\
     &=|\nabla_\mathbb{B}u|^{p-2}tr\begin{pmatrix}
    (t\partial_t)^2u & (t\partial_t)\partial_xu \\
    \partial_x(t\partial_t)u & \partial_x^2u
    \end{pmatrix}\\
     &+(p-2)|\nabla_\mathbb{B}u|^{p-4}
    tr \Bigg(\nabla_\mathbb{B}u^T\nabla_Bu \begin{pmatrix}
    (t\partial_t)^2u & (t\partial_t)\partial_xu \\
    \partial_x(t\partial_t)u & \partial_x^2u
    \end{pmatrix}\Bigg)\\
     &=|\nabla_\mathbb{B}u|^{p-2}tr(\nabla_\mathbb{B}^2u)+(p-2)|\nabla_\mathbb{B}u|^{p-4}tr (\nabla_\mathbb{B}u^T\nabla_\mathbb{B}u\nabla_\mathbb{B}^2u)\\
     &=|\nabla_\mathbb{B}u|^{p-2}tr(\nabla_\mathbb{B}^2u)+(p-2)|\nabla_\mathbb{B}u|^{p-2}tr \Bigg(\frac{\nabla_\mathbb{B}u^T}{|\nabla_\mathbb{B}u|}\frac{\nabla_\mathbb{B}u}{|\nabla_\mathbb{B}u|}\nabla_\mathbb{B}^2u\Bigg)\\
     &=|\nabla_\mathbb{B}u|^{p-2}tr\Bigg(\bigg(I+(p-2)\frac{\nabla_\mathbb{B}u^T}{|\nabla_\mathbb{B}u|}\frac{\nabla_\mathbb{B}u}{|\nabla_\mathbb{B}u|}\bigg)\nabla_\mathbb{B}^2u\Bigg).
     \end{align*}
We denote $$Q=I+(p-2)\frac{\nabla_\mathbb{B}u^T}{|\nabla_\mathbb{B}u|}\frac{\nabla_\mathbb{B}u}{|\nabla_\mathbb{B}u|},$$ then \eqref{eq:11}  can be reduced to \eqref{H3},
\begin{equation}\label{H3}
    t^{-p}|\nabla_\mathbb{B}u|^{p-2}tr(Q\nabla_\mathbb{B}^2u)+t^{-p}(n-p)|\nabla_\mathbb{B}u|^{p-2}(t\partial_t u)=f(t,x).
\end{equation}

\begin{Remark}\label{R1}
   Because $\frac{\nabla_\mathbb{B}u}{|\nabla_\mathbb{B}u|}$ is identity vector, the  eigenvalues of $\frac{\nabla_\mathbb{B}u^T}{|\nabla_\mathbb{B}u|}$$\frac{\nabla_\mathbb{B}u}{|\nabla_\mathbb{B}u|}$ are $1,0,\cdots$, $0$. So the eigenvalues of matrix $Q$ are $p-1,1,\cdots,1$.  Furthermore $ p\ge 2$, so 
    we can take $\lambda=1,\ \Lambda=p-1$ such that
    $$\lambda I \leq Q \leq \Lambda I.$$
 According to the Definition $\ref{D1}$, we can derive
 $$\mathcal{M}^-_{1,p-1}(\nabla_B^2u) \le tr(Q\nabla_B^2u) \le \mathcal{M}^+_{1,p-1}(\nabla_B^2u).$$
 For convenience, We denote $\mathcal{M}^-_{1,p-1}$ and $\mathcal{M}^+_{1,p-1}$  as $\mathcal{M}^-$ and $\mathcal{M}^+$, respectively.
 \end{Remark}
 \begin{Remark}\label{Remark2.3}
    In the following we discuss the necessary conditions for the viscosity solutions.\\
    $(1)$ If $u$ is a supersolution of equation \eqref{H3}, then  for all $\phi\in C^2(\mathbb{B}) $  the following inequality holds  at each local minimum point
    $(t_0,x_0)\in \mathbb{B} \ \text{of}\  u-\phi ,$ 
  $$
       t^{-p}|\nabla_\mathbb{B}\phi|^{p-2}tr(Q\nabla_\mathbb{B}^2\phi)+t^{-p}(n-p)|\nabla_\mathbb{B}\phi|^{p-2}(t\partial_t \phi)\le f(t_0,x_0). $$
       Then
       \begin{equation}  \label{H5} t^{-p}|\nabla_\mathbb{B}\phi|^{p-2}\mathcal{M}^-(\nabla_\mathbb{B}^2\phi)+t^{-p}(n-p)|\nabla_\mathbb{B}\phi|^{p-2}(t\partial_t \phi)\le f(t_0,x_0).\end{equation}
    $(2)$ If $u$ is a  subsolution of equation \eqref{H3}, then  for all $\phi\in C^2(\mathbb{B}) $  the following inequality holds  at each local maximum point
    $(t_0,x_0)\in \mathbb{B} \ \text{of}\  u-\phi ,$
    \begin{equation}
        t^{-p}|\nabla_\mathbb{B}\phi|^{p-2}\mathcal{M}^+(\nabla_\mathbb{B}^2\phi)+t^{-p}(n-p)|\nabla_\mathbb{B}\phi|^{p-2}(t\partial_t \phi)\ge f(t_0,x_0).
    \end{equation}
\end{Remark}

\section{\textbf{ABP and H\"older estimate}}
In this section, motivated by the work in \cite{IF}, \cite{IIV}, \cite{CI}, ect, we explore the properties of viscosity solutions to Dirichlet problem \eqref{H8}. Firstly, we  give the proof of Theorem \ref{T3} and  Theorem \ref{C} to obtain the ABP estimate of the viscosity solutions for \eqref{eq:11} and H\"older continuity of the viscosity solutions for  \eqref{H8}. Further we give  the proof of Theorem \ref{B} and get the existence of solutions of \eqref{H8}.

Compared with the work in articles mentioned above, our innovation mainly lies in the following three aspects.  Firstly, our model \eqref{eq:11} is new, mainly reflected in the degeneracy of the gradient operator itself in the model. Correspondingly, we  provide an appropriate distance in Definition \ref{cone 9}. Secondly,  the right hand control term of the ABP estimate \eqref{H11} is $L^{\infty}$ norm, different from \cite{IIV}, \cite{IFA}, ect. On the basis of \cite{CL30} research, we obtain the weak Hanack inequality \eqref{eq:44} through classical scaling transformation, which is a crucial step in the proof of ABP estimate.  Combined with the properties of the region, we ultimately obtained the global ABP estimate. Finally, We need to emphasize that  we have obtained a  weighted H\"older continuity \eqref{eq:55} rather than classical H\"older continuity. We combine the local ABP estimate and the properties of domain to obtain globally weighted H\"older continuity. Furthermore, we obtain the existence of viscosity solutions to \eqref{H8}  through function convergence and domain approximation.
%

     If $u$ is a supersolution of \eqref{H3}, then $u$ satisfies \eqref{H5} in viscosity sense. Let us define
$$u_m=\begin {cases}\min \{u,m\}  &\quad \ (t,x)\in A,\\m  &\quad \ (t,x) \in \Omega_d \backslash A,\end {cases}$$
where $A=\mathbb B \cap \Omega_d \neq\emptyset$, and $m=\inf _{(t,x)\in\{\partial A \cap \Omega_{d}\}}u(t,x)$, then $u_m$ satisfies \eqref{4.2} in $ \Omega_d$
\begin{equation}\label{4.2}
    t^{-p}|\nabla_\mathbb{B}u_m|^{p-2}\mathcal{M}^-(\nabla_\mathbb{B}^2u_m)+t^{-p}(n-p)|\nabla_\mathbb{B}u_m|^{p-2}(t\partial_t u_m)\le f^+(t,x) \chi(A),
\end{equation}
that is, $u_m $ is a viscosity supersolution of $\eqref{4.22}$
\begin{equation}\label{4.22}
    t^{-p}|\nabla_\mathbb{B}u_m|^{p-2}\mathcal{M}^-(\nabla_\mathbb{B}^2u_m)+t^{-p}(n-p)|\nabla_\mathbb{B}u_m|^{p-2}(t\partial_t u_m)= f^+(t,x) \chi(A),
\end{equation}
It is just to illustrate that  $(t,x)\in \partial A$. Since $u(t,x)\geq m$ for $(t,x)\in  \partial A$, we conclude that \ref{4.2} holds.


Firstly, we present two existing results from \cite{CL30} as the preparation.
\begin{lemma}[
\cite{CL30}]\label{Theorem3.2}
For any non-negative continuous function $u:\ \overline{B}_1 \rightarrow \mathbb{R}$ such that
$$
\begin{array}{ll}
\mathcal{P}^{-}\left(D^2 u, \nabla u\right) \leq C_0 & \text { in } B_1, \\
\mathcal{P}^{+}\left(D^2 u, \nabla u\right) \geq-C_0 & \text { in } B_1,
\end{array}
$$
where
$$
 \mathcal{P}^{-}\left(D^2 u, \nabla u\right)= \begin{cases}\lambda tr D^2 u^{+}-\Lambda tr D^2 u^{-}-\Lambda|\nabla u| & \text{ if }|\nabla u| \geq \gamma, \\
-\infty & \text{ otherwise, }\end{cases}
$$
 $$
\mathcal{P}^{+}\left(D^2 u, \nabla u\right)= \begin{cases}\Lambda tr D^2 u^{+}-\lambda tr D^2 u^{-}+\Lambda|\nabla u| & \text{ if }|\nabla u| \geq \gamma, \\
+\infty & \text{otherwise,}\end{cases} \\$$
we have
$$
\sup _{B_{1 / 2}} u \leq C\left(\inf _{B_{1 / 2}} u+C_0\right).
$$
The constant $C$ depends on $\lambda, \Lambda$, the dimension and $\gamma /\left(C_0+\mathop{\inf}\limits_{B_{1 / 2}} u\right)$.
\end{lemma}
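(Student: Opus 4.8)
\emph{Proof plan.} I would follow the Krylov--Safonov scheme adapted to the $\gamma$-degenerate extremal operators: split the Harnack estimate into a weak Harnack inequality coming from the one-sided bound $\mathcal P^-(D^2u,\nabla u)\le C_0$ and a local boundedness estimate coming from $\mathcal P^+(D^2u,\nabla u)\ge -C_0$, then glue the two together by a covering/Harnack-chain argument. The first move is a normalization. Since $u\ge0$ is continuous, replacing $u$ by $v:=u/\kappa$ with $\kappa:=C_0+\inf_{B_{1/2}}u$ (adding a small positive constant to $u$ first if $\kappa=0$) turns the hypotheses into $\mathcal P^-(D^2v,\nabla v)\le 1$ and $\mathcal P^+(D^2v,\nabla v)\ge-1$, with $v\ge0$ and $\inf_{B_{1/2}}v\le 1$, at the sole cost of rescaling the gradient threshold from $\gamma$ to $\gamma':=\gamma/\kappa$. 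The claim is then reduced to $\sup_{B_{1/2}}v\le C(n,\lambda,\Lambda,\gamma')$, which already explains the asserted dependence of $C$ on $\gamma/(C_0+\inf_{B_{1/2}}u)$.

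For the weak Harnack I would first establish a measure estimate of Krylov--Safonov type: there are $M>1$ and $\mu\in(0,1)$, depending only on $n,\lambda,\Lambda,\gamma'$, such that if $v\ge0$ satisfies $\mathcal P^-(D^2v,\nabla v)\le 1$ on a slightly dilated ball and $\inf v\le1$ on a small cube $Q$, then $|\{v\le M\}\cap Q|\ge\mu|Q|$. This is proved, as in Caffarelli--Cabr\'e, by sliding a fixed smooth barrier $\psi$ (large near the centre, strictly negative on the outer sphere) below $v$ and applying the Aleksandrov--Bakelman--Pucci estimate to an auxiliary function built from $v$ and $\psi$; the hypothesis enters as $\mathcal M^-_{\lambda,\Lambda}(D^2v)\le 1+\Lambda|\nabla v|$ on the set $\{|\nabla v|\ge\gamma'\}$, the first-order term $\Lambda|\nabla v|$ is absorbed into the barrier in the usual way, and the one genuinely new requirement is that $\psi$ be chosen with $|\nabla\psi|\ge\gamma'$ on the annulus carrying the contact set, so that there the differential inequality is not vacuous. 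Iterating this measure estimate along a dyadic decomposition by the ``growing ink-spots'' (Calder\'on--Zygmund) covering lemma yields the power decay $|\{v>M^k\}\cap Q|\le(1-\mu)^k|Q|$, hence $\|v\|_{L^\varepsilon(Q)}\le C$ for some small $\varepsilon=\varepsilon(n,\lambda,\Lambda,\gamma')>0$; a chaining argument over a covering of $B_{3/4}$ then gives $\|v\|_{L^\varepsilon(B_{3/4})}\le C$.

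For the local boundedness I would run the mirror-image argument on the $\mathcal P^+$-side: with analogous barriers and the ABP estimate now applied through $\mathcal M^+_{\lambda,\Lambda}(D^2v)\ge -1-\Lambda|\nabla v|$ on $\{|\nabla v|\ge\gamma'\}$, a De Giorgi--type rescaling iteration gives $\sup_{B_{1/2}}v\le C\bigl(\|v\|_{L^\varepsilon(B_{3/4})}+1\bigr)$ with $C=C(n,\lambda,\Lambda,\gamma')$. Combining this with the weak Harnack bound of the previous paragraph gives $\sup_{B_{1/2}}v\le C$, and undoing the normalization produces $\sup_{B_{1/2}}u\le C\bigl(C_0+\inf_{B_{1/2}}u\bigr)$ with $C$ depending on $n,\lambda,\Lambda$ and $\gamma/(C_0+\inf_{B_{1/2}}u)$, as stated.

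The main obstacle is precisely the gradient threshold. On $\{|\nabla v|<\gamma'\}$ the operators $\mathcal P^\pm$ equal $\mp\infty$ and the hypotheses say nothing, so the ABP/measure estimates must be arranged so that this ``bad'' region never supports the contact set of the comparison barrier; this is what forces the barriers to have gradient at least $\gamma'$ where the comparison is applied, and it is what makes every constant degenerate as $\gamma'\to\infty$ (equivalently as $C_0+\inf_{B_{1/2}}u\to0$). Controlling this degeneration is the heart of the matter and is the only place where the argument genuinely departs from the classical uniformly elliptic Krylov--Safonov theory; everything else is a routine, if lengthy, adaptation of that theory with a bounded first-order term.
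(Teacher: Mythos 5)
Note first that the paper itself contains no proof of this lemma: it is imported verbatim from Imbert--Silvestre \cite{CL30}, and the authors only add the remark that the argument there tolerates a first-order coefficient $(K_0d_0+1)|n-p|$. So your proposal must be measured against the proof in \cite{CL30} rather than against anything in this paper. Your normalization (dividing by $\kappa=C_0+\inf_{B_{1/2}}u$ and rescaling the threshold to $\gamma'=\gamma/\kappa$) and the overall architecture -- weak Harnack from the $\mathcal{P}^-$ bound, local boundedness from the $\mathcal{P}^+$ bound, covering argument, constant depending on $\gamma'$ -- are sound and do reproduce the stated dependence of $C$ on $\gamma/(C_0+\inf_{B_{1/2}}u)$.

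The gap is in the one step you yourself call the heart of the matter. In the Caffarelli--Cabr\'e measure estimate the contact set of the convex-envelope ABP argument is not carried by an annulus: the useful contact points are forced (by the support of the right-hand side of the barrier inequality) into the inner cube that contains the barrier's minimum, and near an interior minimum any smooth barrier $\psi$ has $|\nabla\psi|$ arbitrarily small. Hence you cannot arrange $|\nabla\psi|\ge\gamma'$ on the region that actually supports the contact set. At contact points where $|\nabla u|<\gamma'$ the hypotheses are vacuous ($\mathcal{P}^-=-\infty$), so there is no bound on $\det D^2\Gamma_w$ there, and the gradient image of these bad contact points is not small, so they cannot simply be discarded from the ABP integral; the estimate as you describe it does not close. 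Repairing it requires a genuinely new device: either barriers with a Lipschitz cusp (a cone of slope at least $\gamma'$) at the minimum -- the concave kink prevents the envelope from touching at the vertex, and away from the vertex the gradient is genuinely large -- or abandoning the smooth-barrier ABP altogether in favour of the sliding cusp/paraboloid-type measure estimates combined with the growing ink-spots covering that \cite{CL30} actually employs. As written, your plan asserts this difficulty away rather than resolving it, so the key measure estimate, and with it the whole Harnack bound, is not established.
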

 \begin{lemma}[
 \cite{CL30}]\label{Theome 3.1}
 There exist small constants $\epsilon_0>0$ and $\epsilon>0$ such that if $\gamma \leq \epsilon_0$, then for any lower semicontinuous function $u: B_2 \rightarrow \mathbb{R}$ such that
$$
\begin{aligned}
& u \geq 0 \quad\quad\quad\quad\quad\quad\ \  \text{ in } B_2, \\
& \mathcal{P}^{-}\left(D^2 u, \nabla u\right) \leq 1 \quad \text{ in } B_2, \\
& \inf _{B_1} u \leq 1,
\end{aligned}
$$
we have
$$
\left|\{u>t\} \cap B_1\right| \leq C t^{-\epsilon} \quad \text { for all } t>0 \text {. }
$$
\end{lemma}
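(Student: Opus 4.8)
\textbf{Proof strategy for Theorem \ref{A} (comparison principle).}

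The plan is to run Jensen's method of doubling variables, but first to \emph{remove the degeneracy in the gradient} by an equivalent transformation, since the operator in \eqref{H8} loses ellipticity where $\nabla_\mathbb{B}u$ vanishes and carries no zero-order term. The key structural input is the hypothesis $t^pf(t,x)\ge\omega>0$: this forces any viscosity supersolution to have a genuinely nonvanishing $\mathbb{B}$-gradient at the relevant touching points, so the term $|\nabla_\mathbb{B}u|^{p-2}$ becomes a positive bounded factor there and the reduced operator \eqref{H3}, written through the Pucci bounds of Remark \ref{R1}, behaves like a uniformly elliptic equation with first-order term. Concretely, I would work in the logarithmic coordinate $a=\ln t$ introduced in the Suitability Remark, under which $\nabla_\mathbb{B}u=\nabla\overline u$ and $\nabla_\mathbb{B}^2u=\nabla^2\overline u$; in these coordinates $\mathbb{B}$ becomes $(-\infty,0)\times X$ and the distance on the cone becomes the Euclidean distance, so all the machinery for standard viscosity solutions applies, \emph{provided} we control the behaviour as $a\to-\infty$ (i.e. $t\to0$). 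This is exactly why the hypotheses require $\lim_{t\to0}\sup u$ and $\lim_{t\to0}\inf v$ to exist with $\lim_{t\to0}\sup u\le\lim_{t\to0}\inf v$, and why $v$ is assumed bounded above.

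The steps, in order. First, suppose for contradiction that $\sup_{\mathbb B}(u-v)=:\theta>0$. By the boundary hypothesis $u\le v$ on $\partial\mathbb B$ and by the limiting hypotheses at $t\to0$, the positive supremum $\theta$ is attained (or approached) at interior points bounded away from $\partial\mathbb B$ and from $\{t=0\}$; more precisely, for any maximizing sequence the cone-distance to $\partial\mathbb{B}$ stays bounded below and $t$ stays bounded below, so in the $a$-coordinate the action happens in a compact subset of $(-\infty,0)\times X$. Second, double the variables: for $\alpha>0$ large consider
\[
\Phi_\alpha(z,w)=u(z)-v(w)-\frac{\alpha}{2}\,d(z,w)^2,
\]
with $d$ the cone distance, and let $(z_\alpha,w_\alpha)=((t_\alpha,x_\alpha),(s_\alpha,y_\alpha))$ be a maximum point (existence uses $u$ u.s.c., $v$ l.s.c. and bounded above, and the limiting behaviour at $t\to0$ to keep the maximizers in a compact set, cf. Remark \ref{Remark 3.1}). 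Standard arguments give $\alpha\, d(z_\alpha,w_\alpha)^2\to0$, $\Phi_\alpha(z_\alpha,w_\alpha)\to\theta$, and a subsequence $(z_\alpha,w_\alpha)\to(z_0^*,z_0^*)$ with $z_0^*=(z_0^*,z^*)\in\mathbb B$ interior. Third, apply the Crandall--Ishii lemma in the $a$-coordinates to obtain matrices $\mathcal X,\mathcal Y$ with $\mathcal X\le\mathcal Y$ and a common ``gradient'' $P_\alpha=\alpha\,(\nabla_z d^2/2)$, such that the subsolution inequality for $u$ holds with $(P_\alpha,\mathcal X)$ and the supersolution inequality for $v$ holds with $(P_\alpha,\mathcal Y)$. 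Fourth — the crucial point — use $t^pf\ge\omega>0$ to conclude $|P_\alpha|$ is bounded below by a positive constant uniformly in $\alpha$: if $P_\alpha$ were small, the subsolution inequality $t_\alpha^{-p}|P_\alpha|^{p-2}\mathcal M^+(\mathcal X)+\text{(first order)}\ge f(z_\alpha)$ combined with $\mathcal M^+(\mathcal X)\le\mathcal M^+(\mathcal Y)$... would contradict $t_\alpha^pf(z_\alpha)\ge\omega$. With $|P_\alpha|$ bounded above and below, divide through by $t_\alpha^{-p}|P_\alpha|^{p-2}$ to reduce to the uniformly elliptic Pucci inequalities
\[
\mathcal M^+(\mathcal X)+c_\alpha\cdot(\text{component of }P_\alpha)\ \ge\ t_\alpha^p f(z_\alpha)/|P_\alpha|^{p-2},\qquad
\mathcal M^-(\mathcal Y)+c_\alpha\cdot(\text{component of }P_\alpha)\ \le\ s_\alpha^p f(w_\alpha)/|P_\alpha|^{p-2}.
\]
Subtract, use $\mathcal X\le\mathcal Y$ hence $\mathcal M^+(\mathcal X)-\mathcal M^-(\mathcal Y)\le\mathcal M^+(\mathcal X-\mathcal Y)\le0$, and let $\alpha\to\infty$: the right-hand sides converge (by continuity of $f$ and $z_\alpha,w_\alpha\to z_0^*$, using $\alpha d^2\to0$ so that $|P_\alpha|$ converges and the first-order terms cancel in the limit) to a common value, while the left-hand side difference is $\le0$. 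Carefully tracking the signs, the positivity $t^pf\ge\omega$ is what prevents the two inequalities from being simultaneously compatible with $\theta>0$, yielding the contradiction; hence $\theta\le0$ and $u\le v$ in $\mathbb B$.

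The main obstacle I expect is the interplay between the cone degeneracy at $t=0$ and the doubling procedure: one must ensure that the maximizers $(z_\alpha,w_\alpha)$ do not escape to the singular end $t\to0$ nor to $\partial\mathbb B$, which is precisely where the existence of $\lim_{t\to0}\sup u$, $\lim_{t\to0}\inf v$ and the ordering of these limits enter, together with boundedness of $v$ from above; and one must verify that the ``gradient'' $P_\alpha$ produced by doubling with the \emph{cone} distance (rather than the Euclidean one) still lies in a compact subset of $\mathbb{R}^n\setminus\{0\}$ so that $|P_\alpha|^{p-2}$ is a harmless positive multiplier — this uses the switch to the $a=\ln t$ coordinate, where $d^2$ becomes Euclidean, $\nabla_\mathbb{B}$ becomes an ordinary gradient, and the factor $t_\alpha^{-p}$ stays bounded above and below because $t_\alpha$ is bounded away from $0$. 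A secondary technical point is that the equation is stated in the form $F=0$ with $F$ only lower semicontinuous in the gradient (through $|\nabla_\mathbb{B}u|^{p-2}$ when $p>2$ it is continuous, but care is needed that the operator is proper/elliptic in the sense of Definition \ref{D1}); Remark \ref{R1} already records $\lambda I\le Q\le\Lambda I$, so once $P_\alpha$ is bounded away from $0$ the reduced operator is genuinely uniformly elliptic and the classical comparison machinery of \cite{HI26}, as refined in \cite{HP2}, \cite{MHP}, goes through.
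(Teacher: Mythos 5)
Your proposal does not address the statement at hand. The statement to be proved is the measure ($L^{\epsilon}$) estimate of Imbert--Silvestre: for a lower semicontinuous $u\ge 0$ on $B_2$ with $\mathcal{P}^{-}(D^2u,\nabla u)\le 1$ in the viscosity sense (an inequality that carries information only where $|\nabla u|\ge\gamma$) and $\inf_{B_1}u\le 1$, one has $|\{u>t\}\cap B_1|\le Ct^{-\epsilon}$ for all $t>0$. What you have written instead is a strategy for Theorem \ref{A}, the comparison principle: doubling of variables in the $a=\ln t$ coordinate, the Crandall--Ishii lemma, and the use of $t^pf\ge\omega>0$ to keep the gradient of the test function away from zero. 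None of that machinery produces a decay estimate on the measure of superlevel sets of a supersolution; the two results are of a completely different nature (a quantitative weak-Harnack-type ingredient versus a uniqueness-type maximum principle), and no part of your argument can be repurposed to yield the power decay $Ct^{-\epsilon}$.

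For orientation: in the paper this lemma is not proved at all; it is quoted from \cite{CL30}, and the authors only remark that the proof there goes through when the coefficient of the first-order term is $(K_0d_0+1)|n-p|$ and that one may take $\epsilon_0=1$ (this is what feeds into Corollary \ref{corollary4} and then Lemma \ref{lemma3.31}). A genuine proof, following \cite{CL30}, runs along entirely different lines from your sketch: one first proves an ABP-type measure estimate by sliding cusp-like (or paraboloid) barriers from below and estimating the measure of the contact set, using that at contact points the gradient of the barrier can be arranged to exceed $\gamma$ so that the degenerate operator $\mathcal{P}^{-}$ is actually active; one then constructs an explicit barrier to pass from ``$u$ small somewhere in $B_1$'' to ``$u$ is small on a fixed fraction of a small ball''; finally one iterates this base estimate at all scales through a Calder\'on--Zygmund/growing-ink-spots covering argument, which is what converts the fixed-fraction estimate into the power decay $|\{u>t\}\cap B_1|\le Ct^{-\epsilon}$. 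If your goal was Theorem \ref{A}, your outline is broadly consistent with the paper's Section 4 (though the paper additionally uses the substitution $u=\psi(z)$ to create a strictly monotone zero-order term before doubling variables, rather than extracting a lower bound on $|P_\alpha|$ directly from $t^pf\ge\omega$); but as a proof of the present lemma it is not a start.
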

We can easily see through their proof process that the result still holds when the coefficient of the first-order term is $(K_0d_0+1)|n-p|$, and we can choose $\epsilon_0=1$.

\begin{corollary}[Harnack  inequality]\label{corolly3}

For any non-negative continuous function $u: \ \overline{B}_1 \rightarrow \mathbb{R}$ such that
$$
\begin{array}{ll}
\mathcal{P}^{-}_{1}\left(D^2 u, \nabla u\right) \leq C_0 & \text{ in } B_1, \\
\mathcal{P}^{+}_{1}\left(D^2 u, \nabla u\right) \geq-C_0 & \text{ in } B_1,
\end{array}
$$
where
$$\begin{aligned}
& \mathcal{P}^{-}_{1}\left(D^2 u, \nabla u\right)\\=& \begin{cases}tr D^2 u^{+}-(p-1)tr D^2 u^{-}-(K_0d_0+1)|n-p||\nabla u| & \text{ if }|\nabla u| \geq \gamma, \\
-\infty & \text{otherwise, }\end{cases}\end{aligned}
$$
 $$\begin{aligned}
&\mathcal{P}^{+}_{1}\left(D^2 u, \nabla u\right)\\=& \begin{cases}(p-1) tr D^2 u^{+}- tr D^2 u^{-}+(K_0d_0+1)|n-p||\nabla u| & \text{ if }|\nabla u| \geq \gamma, \\
+\infty & \text{otherwise,}\end{cases} \end{aligned}$$
we have
$$
\sup _{B_{1 / 2}} u \leq C\left(\inf _{B_{1 / 2}} u+C_0\right).
$$
The constant $C$ depends on $p , n, K_0d_0 $ and $\gamma /\left(C_0+\mathop{\inf}\limits_{B_{1 / 2}} u\right)$.
\end{corollary}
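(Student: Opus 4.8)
The plan is to obtain the statement as an essentially immediate consequence of Lemma~\ref{Theorem3.2}, together with the remark preceding it. First I would match parameters: choosing $\lambda=1$ and $\Lambda=p-1$ in the Pucci-type operators $\mathcal{P}^{\pm}$ of Lemma~\ref{Theorem3.2} makes their second-order parts coincide with those of $\mathcal{P}^{\pm}_{1}$ in Corollary~\ref{corolly3}; the only difference is the coefficient in front of $|\nabla u|$, which here is $\mu:=(K_0d_0+1)|n-p|$ instead of $\Lambda=p-1$, the case distinction at $|\nabla u|=\gamma$ and the value of $\gamma$ being the same in both.

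Next I would dispose of two cases. If $\mu\le p-1$, then $-\mu|\nabla u|\ge -(p-1)|\nabla u|$ yields $\mathcal{P}^{-}\le\mathcal{P}^{-}_{1}$ and $\mathcal{P}^{+}\ge\mathcal{P}^{+}_{1}$ pointwise, so the hypotheses on $u$ give $\mathcal{P}^{-}(D^2u,\nabla u)\le C_0$ and $\mathcal{P}^{+}(D^2u,\nabla u)\ge -C_0$ in $B_1$, and Lemma~\ref{Theorem3.2} applies verbatim. If $\mu>p-1$, monotonicity is no longer available, and here I would invoke the robustness of the argument in \cite{CL30}: the Harnack estimate there is built from the distributional bound of Lemma~\ref{Theome 3.1} and a standard covering and iteration scheme, and in both ingredients the first-order term is genuinely of lower order, its coefficient entering only through the barrier functions and through the smallness threshold $\gamma\le\epsilon_0$, harmlessly in each place. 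As recorded in the preamble to the corollary, inspecting these proofs shows that $\Lambda|\nabla u|$ may be replaced throughout by $\mu|\nabla u|$, that $\epsilon_0$ may be taken equal to $1$, and that the conclusion survives with $C$ now also depending on $\mu$, that is, on $K_0d_0$, $n$ and $p$.

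In either case I then invoke Lemma~\ref{Theorem3.2} --- in its extended form when $\mu>p-1$ --- and read off $\sup_{B_{1/2}}u\le C\bigl(\inf_{B_{1/2}}u+C_0\bigr)$ with $C$ depending on $p$, $n$, $K_0d_0$ and $\gamma/\bigl(C_0+\inf_{B_{1/2}}u\bigr)$, which is precisely the assertion of Corollary~\ref{corolly3}.

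The step I expect to be the main obstacle is the case $\mu>p-1$: one must re-enter the proof in \cite{CL30} and check, occurrence by occurrence, that $\Lambda$ never plays there the role of an ellipticity constant in disguise, so that enlarging the coefficient of the gradient term affects only constants and barrier radii in a controlled way, and that the normalization $\epsilon_0=1$ is legitimate. This is routine but not automatic, all the more so because the obvious shortcut --- rescaling $u$ to bring $\mu$ down to $\Lambda$ --- would change the radius of the ball and is therefore unavailable for an estimate posed on the fixed balls $B_{1/2}\subset B_1$.
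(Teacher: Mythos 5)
Your proposal is correct and takes essentially the same approach as the paper: the paper itself offers no detailed proof of Corollary~\ref{corolly3}, only the one-sentence remark preceding it asserting that the argument of \cite{CL30} survives when the gradient coefficient $\Lambda$ is replaced by $(K_0d_0+1)|n-p|$ and $\epsilon_0$ is taken to be $1$, which is precisely the inspection you invoke in your harder case $\mu>p-1$. Your preliminary reduction for $\mu\le p-1$ via monotonicity of $\mathcal{P}^{\pm}$ in the gradient coefficient is a clean extra observation the paper does not bother to make, and your caveat that rescaling cannot be used to lower $\mu$ on fixed balls correctly identifies why the general case genuinely requires going back into the \cite{CL30} proof rather than a soft reduction.
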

\begin{corollary}\label{corollary4}

 There exists a small constant $\epsilon>0$ such that if $\gamma \leq 1$, then for any lower semicontinuous function $u: B_2 \rightarrow \mathbb{R}$ such that
$$
\begin{aligned}
& u \geq 0 \quad\quad\quad\quad\quad\quad\ \  \text{ in } B_2, \\
& \mathcal{P}^{-}_{1}\left(D^2 u, \nabla u\right) \leq 1 \quad \text{ in } B_2, \\
& \inf _{B_1} u \leq 1,
\end{aligned}
$$
we have
$$
\left|\{u>t\} \cap B_1\right| \leq C t^{-\epsilon} \quad \text{ for all } t>0 \text {,}
$$
with $\epsilon$ depending on $n,\ p,\ K_0d_0$.

\end{corollary}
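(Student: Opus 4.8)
The plan is to obtain Corollary~\ref{corollary4} as the specialization of Lemma~\ref{Theome 3.1} from \cite{CL30} in which $\lambda=1$, $\Lambda=p-1$, and the coefficient of the first-order term (which in Lemma~\ref{Theome 3.1} equals $\Lambda$) is replaced by the fixed constant $b:=(K_0d_0+1)|n-p|$, together with the two sharpenings recorded in the statement: that the threshold $\epsilon_0$ may be taken to be $1$ and that the resulting $\epsilon$ depends only on $n,p,K_0d_0$. I would therefore revisit the proof of Lemma~\ref{Theome 3.1} in \cite{CL30} and isolate exactly where $\lambda$, $\Lambda$, the first-order coefficient, and the truncation level $\gamma$ enter.

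That proof proceeds in three stages: (i) the construction of a smooth barrier $\varphi$ on a fixed cube $Q$, with $\varphi\le 0$ on $\partial Q$, $\varphi$ bounded below, $\nabla\varphi$ and $D^2\varphi$ controlled, $|\nabla\varphi|$ bounded below by a positive constant away from a small ball $B_{r_0}$, and $\mathcal{P}^-_1(D^2\varphi,\nabla\varphi)\le\psi$ with $\psi$ bounded and supported in $B_{r_0}$; (ii) an ABP estimate on cubes for $\mathcal{P}^-_1$ applied to $u+\varphi$, yielding a first measure estimate of the form $|\{u\le M\}\cap B_1|\ge\mu|B_1|$ with $M,\mu$ depending only on the data; and (iii) rescaling (ii) over dyadic subcubes and iterating through a Calder\'on--Zygmund decomposition to get $|\{u>M^k\}\cap B_1|\le(1-\mu)^k|B_1|$, which is precisely the asserted decay with $\epsilon=-\log(1-\mu)/\log M$. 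Only stages (i) and (ii) see the structural constants; stage (iii) is purely combinatorial and transfers verbatim once $M,\mu$ are fixed.

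For stage (i) I would use the usual radial barrier $\varphi(x)=A_1-A_2|x|^{-\alpha}$ away from the origin, flattened to a constant inside $B_{r_0}$ (which is where the source $\psi$ is created). For this $\varphi$ one has $|\nabla\varphi|=A_2\alpha|x|^{-\alpha-1}$, and $D^2\varphi$ has one negative eigenvalue and $n-1$ positive ones, so on $Q\setminus B_{r_0}$ the inequality $\mathcal{P}^-_1(D^2\varphi,\nabla\varphi)\le 0$ reduces, after dividing by $A_2\alpha|x|^{-\alpha-2}$, to $(n-1)-(p-1)(\alpha+1)-b|x|\le 0$; since $|x|\le\operatorname{diam}Q$ is fixed and the first-order term here carries a favorable sign, any exponent $\alpha=\alpha(n,p)$ with $\alpha\ge(n-p)/(p-1)$ works. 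One then fixes $A_1,A_2$ so that $\varphi\le 0$ on $\partial Q$, $\varphi$ is bounded below, and $\inf_{Q\setminus B_{r_0}}|\nabla\varphi|\ge 1$; with this last normalization the truncation in $\mathcal{P}^-_1$ is inactive on $Q\setminus B_{r_0}$ as soon as $\gamma\le 1$, so the hypothesis $\gamma\le\epsilon_0$ becomes simply $\gamma\le 1$, i.e. $\epsilon_0=1$. For stage (ii), the ABP-on-cubes estimate for $\mathcal{P}^-_1$ depends on $\lambda=1$, $\Lambda=p-1$, $b=(K_0d_0+1)|n-p|$ and the fixed side length of $Q$, hence the resulting $M$ and $\mu$ — and through them $C$ and $\epsilon$ — depend only on $n$, $p$, $K_0d_0$. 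When $n=p$ the first-order term vanishes and everything is even simpler.

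There is no essential new difficulty here: the entire content sits in \cite{CL30}, and the work is bookkeeping. The one point that genuinely needs attention is that the first measure estimate constant $\mu$ in stage (ii) stays bounded away from $0$ uniformly in the data, since this is what makes $\epsilon=-\log(1-\mu)/\log M$ a positive exponent depending only on $n,p,K_0d_0$, together with the normalization of the barrier in stage (i) that trades the a priori threshold $\epsilon_0$ for the explicit value $1$.
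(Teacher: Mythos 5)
Your approach matches the paper's, which gives no independent proof: immediately before stating Corollaries~\ref{corolly3} and~\ref{corollary4} the paper simply remarks that the Imbert--Silvestre measure-decay lemma carries over with $\lambda=1$, $\Lambda=p-1$, drift coefficient $(K_0d_0+1)|n-p|$, and $\epsilon_0=1$, and your proposal is a detailed unpacking of that remark. One slip in the bookkeeping deserves mention. In the ABP-on-cubes step one inserts $w=u+\varphi$ and uses
$$
\mathcal{P}^{-}_{1}\bigl(D^2(u+\varphi),\nabla(u+\varphi)\bigr)\le \mathcal{P}^{-}_{1}\bigl(D^2u,\nabla u\bigr)+\mathcal{P}^{+}_{1}\bigl(D^2\varphi,\nabla\varphi\bigr),
$$
so the barrier must be tested against the \emph{maximal} operator $\mathcal{P}^{+}_{1}$, not $\mathcal{P}^{-}_{1}$; with your radial $\varphi(x)=A_1-A_2|x|^{-\alpha}$ the condition $\mathcal{P}^{+}_{1}(D^2\varphi,\nabla\varphi)\le 0$ off the core becomes $(p-1)(n-1)-(\alpha+1)+b|x|\le 0$ with $b=(K_0d_0+1)|n-p|$, so the drift term is in fact \emph{unfavorable}, not favorable as you claim. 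Because $|x|$ is bounded on the fixed cube this only forces $\alpha$ (hence $M$, $\mu$, $C$, $\epsilon$) to depend on $b$ as well as on $n,p$; since $K_0d_0$ is already in the stated dependence, the conclusion is unaffected, and the rest of your argument (the gradient normalization giving $\epsilon_0=1$, and the Calder\'on--Zygmund iteration) is as you describe.
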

\begin{lemma}\label{lemma3.31}
 If $u_m$ is a nonnegative supersolution of \eqref{4.22} in $\Omega_{2d}$ with $d\leq K_0d_0+1$, $A=\Omega_{2d}\cap \mathbb{B}$, then
   \begin{equation}\label{eq:44}
   \left(\frac{1}{\vert \Omega_{d}\vert }\int_{\Omega_{d}}u_m^{p_0}\frac{dt}{t}dx\right)^{\frac{1}{p_0}}\leq C\left(\mathop{\inf }\limits_{\Omega_{d}} u_m+d^{\frac{p}{p-1}} {\vert\vert t^pf^-\vert\vert^{\frac{1}{p-1}}_{L^{\infty}(\Omega_{2d}\cap A)}}\right),\end{equation}
    with $p_0$ and $C$ depending on $n,\ p,\ K_0d_0.$
\end{lemma}

\begin{proof}
Let's assume that $\Omega_{2d}$ is centered at $(t_0,x_0)\in \mathbb{B},$ 
  next, we will do a stretching transformation.

   Let 
 $$u_l(s,y)=\frac{1}{L}u_m(T(s,y)), \ T(s,y)=\left(s^d t_0^{1-d},x_0+d(y-x_0)\right),$$ then we have
 $$  \nabla_\mathbb{B}u_l=\frac{d}{L}\nabla_\mathbb{B}u_m, \  \nabla^2_\mathbb{B}u_l= \frac{d^2}{L}\nabla^2_\mathbb{B}u_m.$$
By calculation, it can be concluded that $u_l$ is the supersolution of the following function,
\begin{equation}
\begin{aligned}
   & {\vert \nabla_\mathbb{B}u_l\vert}^{p-2}  tr\left(\left(I+(p-1)\frac{(\nabla_\mathbb{B}u_l)^{T}(\nabla_\mathbb{B}u_l)}{{\vert \nabla_\mathbb{B}u_l \vert}^2}\right)\nabla_\mathbb{B}^2u_l\right)+d(n-p)\vert \nabla_\mathbb{B}u_l\vert^{p-2}\frac{t\partial u_l}{\partial t}\\
   & =\frac { \left(d^pf^+(t,x)t^{p} \chi(A)\right)\circ T}{L^{p-1}}.
  \end{aligned}
\end{equation}
We set
$$a=\ln s,\ \ \bar u_l(a,y)=u_l(s,y),$$ $$\left(\overline{\left(d^pf^+(t,x)t^{p} \chi(A)\right)\circ T}\right)(a,y)=\left({\left(d^pf^+(t,x)t^{p} \chi(A)\right)\circ T}\right)(s,y).$$
then
$$\nabla_\mathbb{B}^2u_l=\nabla^2 \bar u_l\ \ \text{and}\ \ \nabla_\mathbb{B} u_l=\nabla \bar u_l.$$
We obtain $\bar u_l(a,x)$ is the supersolution of the following function,
\begin{equation}
\mathcal{M}^-(\nabla^2\bar {u}_l(a,y))+d(n-p)(\partial_a \bar {u}_l(a,y))=\frac { \overline{\left(d^pf^+(t,x)t^{p} \chi(A)\right)\circ T}}{L^{p-1}|\nabla\bar{u}_l(a,y)|^{p-2}}.
\end{equation}
 If we choose
$$L=\mathop{\inf u_m}\limits_{\Omega_d}+\delta+ d^{\frac{p}{p-1}}{\vert\vert t^pf^+\vert\vert^{\frac{1}{p-1}}_{L^{\infty}({\Omega_{2d}})}},$$
$$|\nabla\bar{u}_l(a,y)|\geq 1, $$
we obtain that $\bar{u}_l$ satifies \\
$$ \inf _{B_1}\bar{u}_l \leq 1,$$
$$ \vert\vert \frac { \overline{\left(d^pf^+(t,x)t^{p} \chi(A)\right)\circ T}}{L^{p-1}|\nabla\bar{u}_l(a,y)|^{p-2}}\vert\vert_{L^{\infty}(B_2)} \leq 1.$$
Since $d\leq K_0d_0+1,$ then
 $$\mathcal{P}^{-}_{1}\left(D^2 \bar{u}_l, \nabla \bar{u}_l\right)\leq 1 \ \ \ \text { in } B_2$$ with $\gamma=1$.
  By Corollary \ref{corollary4}, we have
  $$
\left|\{\bar{u}_l>\tau\} \cap B_1\right| \leq C \tau^{-\epsilon} \quad \text { for all } \tau>0 \text {,}
$$
that is $$ \left|\{{u}_l>\tau\} \cap \Omega_1\right| \leq C \tau^{-\epsilon} \quad \text { for all } \tau>0 \text {.} $$
Then
$$\int \limits_{\Omega_1}u_l^{p_0}\frac{ds}{s}dy=p_0\int \limits_0^{+\infty}\tau^{p_0-1}|\left\{u_l\geq \tau\right\}\cap \Omega_1| d\tau,$$
 we  choose $p_0=\epsilon /2$ 
 in order to get
$$\frac{1}{p_0}\int \limits_{\Omega_1}u_l^{p_0}\frac{ds}{s}dy\le \int \limits_0^1\tau^{\epsilon/2-1}|\Omega_1|d\tau+ C\int \limits_1^{+\infty}\tau^{\epsilon/2-1}s^{-\epsilon}d\tau=:C,$$
$$  $$
Finally, we get
 $$\left(\frac{1}{\vert \Omega_{d}\vert }\int_{\Omega_{d}}u_m^{p_0}\frac{dt}{t}dx\right)^{\frac{1}{p_0}}\leq C\left(\mathop{\inf }\limits_{\Omega_{d}}u_m+d^{\frac{p}{p-1}} {\vert\vert t^pf^+\vert\vert^{\frac{1}{p-1}}_{L^{\infty}(\Omega_{2d}\cap A)}}\right),$$
where $p_0$ \text{and} $ C$ depend on $n,\ p,\ K_0d_0.$
\end{proof}
\begin{lemma}\label{lemma3.32}
 If $v\in C(\overline{\Omega}_{d})$ is a nonnegative viscosity solution of \eqref{H3} in $\Omega_{d}$ with $\Omega_{d}\subset\subset\mathbb{B}$ and $d\leq K_0d_0+1,$ then

    $$\sup _{\Omega_{d}} v\leq C(\inf _{\Omega_{d/2}} v+d^{\frac{p}{p-1}} {\vert\vert t^pf\vert\vert^{\frac{1}{p-1}}_{L^{\infty}(\Omega_{d})}}).$$
    with $C$ depending on $n,\ p,\ K_0d_0.$
\end{lemma}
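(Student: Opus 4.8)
The plan is to deduce this full Harnack-type estimate from the two tools already in hand: the measure estimate in Lemma~\ref{lemma3.31} (weak Harnack for nonnegative supersolutions) and the Harnack inequality in Corollary~\ref{corolly3} after rescaling. Since $v$ is a viscosity solution of \eqref{H3} in $\Omega_d$, it is in particular a nonnegative supersolution, so Remark~\ref{Remark2.3} gives that $v$ satisfies, in the viscosity sense, $t^{-p}|\nabla_\mathbb{B}v|^{p-2}\mathcal{M}^-(\nabla_\mathbb{B}^2 v)+t^{-p}(n-p)|\nabla_\mathbb{B}v|^{p-2}(t\partial_t v)\le f^+$, and likewise $v$ is a subsolution, so the reversed inequality with $\mathcal{M}^+$ and $-f^-$ holds. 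First I would perform the same stretching/logarithmic change of variables as in the proof of Lemma~\ref{lemma3.31}: set $a=\ln s$, rescale by the ``ball'' radius $d$, and normalize by $L=\sup_{\Omega_{d/2}}v+\inf_{\Omega_{d/2}}v+\delta+d^{\frac p{p-1}}\|t^pf\|_{L^\infty(\Omega_d)}^{1/(p-1)}$ (the precise normalizing constant to be chosen so that the rescaled right-hand sides are bounded by $1$ in $L^\infty$ and the gradient term is controlled where $|\nabla\bar v|\ge 1$, exactly as before). On the region where $|\nabla\bar v|\ge\gamma=1$ one gets $\mathcal{P}^-_1(D^2\bar v,\nabla\bar v)\le C_0$ and $\mathcal{P}^+_1(D^2\bar v,\nabla\bar v)\ge -C_0$ with $C_0\le 1$ (using $d\le K_0d_0+1$ so the first-order coefficient $d(n-p)$ is dominated by $(K_0d_0+1)|n-p|$), which is precisely the hypothesis of Corollary~\ref{corolly3}.

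Next I would apply Corollary~\ref{corolly3} on the unit ball to the rescaled function $\bar v$ (possibly after a standard covering/chaining argument to go from $B_{1/2}$ to the whole ``ball'' $\Omega_d$, since the Harnack estimate in the corollary is stated on $B_{1/2}\subset B_1$; one covers $\overline{\Omega_d}$ by finitely many overlapping smaller balls whose number depends only on $n$ and passes the inequality along the chain). This yields
$$\sup_{\Omega_d}v\le C\Big(\inf_{\Omega_{d/2}}v+d^{\frac p{p-1}}\|t^pf\|_{L^\infty(\Omega_d)}^{1/(p-1)}\Big),$$
with $C$ depending on $n,p,K_0d_0$. An alternative route, which avoids rechecking the covering, is to combine Lemma~\ref{lemma3.31} applied to $v$ (a nonnegative supersolution) with an $L^{p_0}$-to-$L^\infty$ local maximum principle for subsolutions: Lemma~\ref{lemma3.31} gives $\big(\fint_{\Omega_d}v^{p_0}\big)^{1/p_0}\le C(\inf_{\Omega_d}v+d^{\frac p{p-1}}\|t^pf^-\|^{1/(p-1)})$, and a local boundedness estimate for subsolutions (derivable the same way from Corollary~\ref{corollary4} applied to $\sup v - v$, which is a nonnegative supersolution of the equation with $\mathcal M^-$ and right-hand side $f^+$) gives $\sup_{\Omega_{d/2}}v\le C\big(\fint_{\Omega_d}v^{p_0}\big)^{1/p_0}+Cd^{\frac p{p-1}}\|t^pf^+\|^{1/(p-1)}$; chaining these two produces the claim.

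The main obstacle I anticipate is bookkeeping around the degenerate gradient region rather than any deep new idea: the Pucci-type operators $\mathcal P^\pm_1$ in Corollary~\ref{corolly3} are only meaningful where $|\nabla\bar v|\ge\gamma$, so one must argue that the estimate from that corollary still applies to $\bar v$ without a pointwise lower gradient bound — this is exactly the ``$-\infty$/$+\infty$ otherwise'' convention in Lemmas~\ref{Theorem3.2}–\ref{Theome 3.1}, and the key point is that the inequalities $\mathcal P^-_1(D^2\bar v,\nabla\bar v)\le C_0$ and $\mathcal P^+_1(D^2\bar v,\nabla\bar v)\ge -C_0$ are automatically satisfied wherever $|\nabla\bar v|<\gamma$, so no extra work is needed there. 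A second, more technical point is verifying that the normalization constant $L$ can indeed be chosen to simultaneously control the rescaled source term and enforce $\inf_{B_1}\bar v\le 1$; this is the same computation carried out in the proof of Lemma~\ref{lemma3.31}, and the only new ingredient is that here $f$ (not just $f^-$ or $f^+$) enters because $v$ is a genuine solution, so both one-sided bounds are used. Modulo these verifications, the statement follows directly by rescaling back from $\bar v$ to $v$.
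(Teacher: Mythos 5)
Your primary approach matches the paper's: after the logarithmic change of variables $a=\ln t$ and the scaling $(a,x)=d(b,y)$, the rescaled solution $\omega=\bar v(T(b,y))/L$ is fed into the Harnack inequality of Corollary~\ref{corolly3}, with $\gamma$ chosen to make $\gamma/(C_0+\inf_{B_{1/2}}\omega)$ a fixed constant. Two remarks, however, on the details. First, your tentative normalization $L=\sup_{\Omega_{d/2}}v+\inf_{\Omega_{d/2}}v+\delta+d^{\frac p{p-1}}\|t^pf\|^{1/(p-1)}$ cannot work as written: if $L$ contains $\sup v$, then unwinding $\sup_{B_{1/2}}\omega\le C(\inf_{B_{1/2}}\omega+C_0)$ gives an estimate of $\sup v$ in terms of $\sup v$, which is circular for $C>1$. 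The paper instead defines $L$ implicitly by $L^{p-1}(1+\inf_{B_{1/2}}\omega)^{p-2}=\|\bar f e^{ap}\|_{L^\infty(B_d)}\,d^p$ and takes $\gamma=1+\inf_{B_{1/2}}\omega$, $C_0=1$; this makes the Harnack constant uniform (the ratio $\gamma/(C_0+\inf\omega)$ equals $1$) and bounds the rescaled source by $1$ on the set $\{|\nabla\omega|\ge\gamma\}$, using only $\inf\omega$ and never $\sup\omega$. Second, the covering/chaining step you anticipate is not actually needed: the proof in the paper produces $\sup_{\Omega_{d/2}}v\le C(\inf_{\Omega_{d/2}}v+d^{\frac p{p-1}}\|t^pf\|^{1/(p-1)}_{L^\infty(\Omega_d)})$, and this is also what is used later in the proof of Theorem~\ref{C} (where the oscillation bounds pair $M_{\tilde r}$ with $m_{\tilde r/2}$, i.e.\ $\sup$ over the half-radius ball); the $\sup_{\Omega_d}$ in the lemma statement appears to be a typo. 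Your suggested alternative route via Lemma~\ref{lemma3.31} plus a separate local boundedness estimate for subsolutions is legitimate and would give a slightly more modular proof, but it is not what the paper does.
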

\begin{proof}
Now, we set $a=lnt,\ \bar v(a,x) =v(t,x),\ \bar f(a,x)=f(t,x)$, we can get $\bar v$ is the viscosity solution of \eqref{HH3} 
\begin{equation}\label{HH3}
   |\nabla u|^{p-2}tr(Q\nabla ^2u)+(n-p)|\nabla u|^{p-2}(\partial _ a u)=\bar f(a,x)e^{ap}
\end{equation}
with $Q=I+(p-2)\frac{\nabla u^T}{|\nabla u|}\frac{\nabla u}{|\nabla u|}$.

We set $$\omega=\frac{\bar{v}(T(b, y))}{L}, \quad(a, x)=T(b, y)=d(b, y),$$
$$
L^{p-1}= \frac{||\bar{f}  e^{ap}||_{L^{\infty}(B_{d})}  d^p}{{(1+\mathop{\inf}\limits_{B_{1/2}} \omega)}^{p-2}},
$$
then $\omega$ is the viscosity solution of \eqref{HH4}
\begin{equation}\label{HH4}
\operatorname{tr}\left(Q \nabla^2 \omega\right)+d(n-p) \partial_a \omega= \frac{\left(\bar{f} e^{ap}\right) \circ(T(b, y)) d^p}{|\nabla \omega|^{p-2} L^{p-1}}.
\end{equation}
If $|\nabla \omega| \geqslant \gamma$,
then $\omega$ satisfies
$$\mathcal{M}^{-}\left( \nabla^2 \omega\right)+d(n-p) \partial_a \omega \leq \frac{\left(\bar{f}  e^{ap}\right)\circ(T(b, y)) d^p}{L^{p-1} {|\nabla \omega|}^{p-2}},$$
and $$\mathcal{M}^{+} \left( \nabla^2 \omega\right)+d(n-p) \partial_a \omega \geq \frac{\left(\bar{f}  e^{ap}\right)\circ(T(b, y)) d^p}{L^{p-1} {|\nabla \omega|}^{p-2}}.$$
Then $\omega$ satisfies the assumptions of Corollary \ref{corolly3} with $C_0=1$ and $\gamma=1+\mathop{\inf}\limits _{B_{1 / 2}} \omega$, so
$$
\sup _{B_{1 / 2}} \omega \leq C\left(\inf _{B_{1 / 2}} \omega+1\right),
$$

$$
 \sup _{\Omega_{d/2}} v\leq C(\inf _{\Omega_{d/2}} v+d^{\frac{p}{p-1}} {\vert\vert t^pf\vert\vert^{\frac{1}{p-1}}_{L^{\infty}(\Omega_{d} )}}).
 $$
\end{proof}
 \begin{proof}[ \textbf{Proof of Theorem \ref{T3}}]
     Let $M=\mathop{\sup \limits_{\mathbb{B}} v^+}$, $ u=M-v^+\geq0 $, since $v$ is a subsolution viscosity of \eqref{eq:11}, then $u$ is the supersolution of
$$t^{-p}|\nabla_\mathbb{B}u|^{p-2}\mathcal{M}^-(\nabla_\mathbb{B}^2u)+t^{-p}(n-p)|\nabla_\mathbb{B}u|^{p-2}(t\partial_t u)=f^-(t,x),\ \ (t,x)\in\mathbb{B}.$$
For all fix $(t,x)\in \mathbb{B}$, let $ A=\mathbb{B}\cap\Omega_{2\tilde{R}_{(t,x)}}$,
therefore, $u_m$ is the supersolution of
$$t^{-p}|\nabla_\mathbb{B}u_m|^{p-2}\mathcal{M}^-(\nabla_\mathbb{B}^2u_m)+t^{-p}(n-p)|\nabla_\mathbb{B}u_m|^{p-2}(t\partial_t u_m)=\chi(A)f^-(t,x)$$ in $\Omega_{2\tilde{R}_{(t,x)}}$.
By Lemma \ref{lemma3.31}, we can get that $u_m$ satisfies
$$(\frac{1}{\vert \Omega_{\tilde{R}_{(t,x)}}\vert }\int_{\Omega_{\tilde{R}_{(t,x)}}}u_m^{p_0}\frac{dt}{t}dx)^{\frac{1}{p_0}}\leq C(\mathop{\inf u_m}\limits_{\Omega_{\tilde{R}_{(t,x)}}}+\tilde{R}^{\frac{p}{p-1}}_{(t,x)} {\vert\vert t^pf^-\vert\vert^{\frac{1}{p-1}}_{L^{\infty}(\Omega_{2\tilde{R}_{(t,x)}}\cap A)}}),$$
with $C$ depends on $ n,\ p,\ K_0d_0$. 

Since $\mathop{\inf }\limits_{\Omega_{\tilde{R}_{(t,x)}}}u_m\leq \mathop{\inf }\limits_{A\cap \Omega_{\tilde{R}_{(t,x)}}}u$, then
$$(\frac{1}{\vert \Omega_{\tilde{R}_{(t,x)}} \vert }\int_{\Omega_{\tilde{R}_{(t,x)}} }u_m^{p_0}\frac{dt}{t}dx)^{\frac{1}{p_0}}\leq C ( \mathop{\inf u}\limits_{A\cap \Omega_{\tilde{R}_{(t,x)}}} +\tilde{R}^{\frac{p}{p-1}}_{(t,x)}{\vert\vert t^pf^-\vert\vert^{\frac{1}{p-1}}_{L^{\infty}(\Omega_{2\tilde{R}_{(t,x)}}\cap \mathbb{B})}}), $$

$$
\begin{aligned}
(\frac{1}{|\Omega_{\tilde{R}_{(t,x)}}|}\int_{ \Omega_{\tilde{R}_{(t,x)}}  }u_m^{p_0}\frac{dt}{t}dx)^{\frac{1}{p_0}}&\geq (\frac{1}{\Omega_{\tilde{R}_{(t,x)}}}\int_{\Omega_{\tilde{R}_{(t,x)}} \backslash \mathbb{B} }u_m^{p_0}\frac{dt}{t}dx)^{\frac{1}{p_0}}\\&=m (\frac{\vert \Omega_{\tilde{R}_{(t,x)}} \backslash \mathbb{B} \vert}{\vert \Omega_{\tilde{R}_{(t,x)}} \vert})^{\frac{1}{p_0}}\geq m \sigma ^{\frac{1}{p_0}}.
\end{aligned}$$
Since
$$\mathop{\inf u}\limits_{A \cap \Omega_{\tilde{R}_{(t,x)}}}\leq u(t,x)=M-v^+(t,x),$$
then
$$ m \sigma ^{\frac{1}{p_0}} \leq C(M-v^+(t,x)+\tilde{R}^{\frac{p}{p-1}}_{(t,x)} {\vert\vert t^pf^-\vert\vert^{\frac{1}{p-1}}_{L^{\infty}(\Omega_{2\tilde{R}_{(t,x)}}\cap \mathbb{B})}}).$$
And $ m=M-\mathop{\sup v^+ }\limits_{\partial{\mathbb{B}_{2\tilde{R}_{(t,x)}}}\cap \Omega_{2\tilde{R}_{(t,x)}}} \geq M -\mathop{\sup }\limits_{\partial\mathbb{B}}v^+$, so
\begin{equation}\label{equation4.16}
v^+(t,x)\leq (1-\frac{\sigma ^{\frac{1}{p_0}}}{C})M+\frac{\sigma ^{\frac{1}{p_0}}}{C}\mathop{\sup }\limits_{\partial\mathbb{B}}v^++\tilde{R}^{\frac{p}{p-1}}_{(t,x)} {\vert\vert t^pf^-\vert\vert^{\frac{1}{p-1}}_{L^{\infty}(\Omega_{2\tilde{R}_{(t,x)}}\cap \mathbb{B})}},
\end{equation}
$$
\mathop{\sup }\limits_{\mathbb{B}}v^+ \leq\mathop{\sup }\limits_{\partial\mathbb{B}}v^++C\mathop {\sup }\limits_{\mathbb{B}}\tilde{R}^{\frac{p}{p-1}}_{(t,x)} {\vert\vert t^pf^-\vert\vert^{\frac{1}{p-1}}_{L^{\infty}(\Omega_{2\tilde{R}_{(t,x)}}\cap \mathbb{B})}},
$$
$$
\mathop{\sup }\limits_{\mathbb{B}}v^+ \leq\mathop{\sup}\limits_{\partial\mathbb{B}} v^++C(K_0d_0)^{\frac{p}{p-1}}\mathop {\sup }\limits_{\mathbb{B}} {\vert\vert t^pf^-\vert\vert^{\frac{1}{p-1}}_{L^{\infty}(\Omega_{2\tilde{R}_{(t,x)}}\cap \mathbb{B})}},
$$
where $C$ depends on $n,\ p,\ \sigma,\ K_0d_0$.
 \end{proof}

 \begin{proof}[ \textbf{Proof of Corollary \ref{Theorem4.2}}]
    If $v$ is the supersolution viscosity of \eqref{eq:11}, then $-v$ is the subsolution of \eqref{eq:11} with $-f$ instead of $f$. By Theorem \ref{T3} , we can conclude that
$$
\mathop{\sup }\limits_{\mathbb{B}}(-v)^+\leq \mathop{\sup }\limits_{\partial \mathbb {B}}(-v)^++C(K_0d_0)^{\frac{p}{p-1}}\mathop{\sup}\limits_{\mathbb{B}}{\vert\vert t^pf^+\vert\vert^{\frac{1}{p-1}}_{L^{\infty}(\Omega_{2\tilde{R}_{(t,x)}}\cap \mathbb{B})}},
$$
$$
\mathop{\sup }\limits_{\mathbb{B}}v^-\leq \mathop{ \sup }\limits_{\partial \mathbb{B}}v^-+C(K_0d_0)^{\frac{p}{p-1}}\mathop{\sup}\limits_{ \mathbb{B}}{\vert\vert t^pf^+\vert\vert^{\frac{1}{p-1}}_{L^{\infty}(\Omega_{2\tilde{R}_{(t,x)}}\cap \mathbb{B})}},
$$
$$
\mathop{\sup }\limits_{\mathbb{B}}\vert v \vert\leq \mathop{\sup }\limits_{\partial \mathbb{B}}\vert v \vert+C(K_0d_0)^{\frac{p}{p-1}}\mathop{\sup}\limits_{\mathbb{B}} {\vert\vert t^pf\vert\vert^{\frac{1}{p-1}}_{L^{\infty}(\Omega_{2\tilde{R}_{(t,x)}}\cap \mathbb{B})}}.
$$
\end{proof}
\begin{proof}[ \textbf{Proof of Theorem \ref{C}}]
    Let $\Omega_{\tilde{r}}$ be a ball of radius $\tilde{r} $ centered at $ (t,x)$. We set
    \begin{equation}
        M_{\tilde{r}}=\mathop{\sup}\limits_{\mathbb B\cap \Omega_{\tilde{r}} }v,\ m_{\tilde{r}}=\mathop{\inf}\limits_{\mathbb B\cap \Omega_{\tilde{r}} }v,\ \omega_{\tilde{r}}=M_{\tilde{r}}-m_{\tilde{r}}.
    \end{equation}

 Firstly, for fixed $(t,x)$,  we can change the $ M$ in the Theorem \ref{T3} to be $\mathop{\sup }\limits_{\mathbb{B}_{2\tilde{R}_{(t,x)}}}v^+$, similar to \eqref{equation4.16}, we find that 
  $$
\frac{v^+(t,x)}{d_{(t,x)}^{\alpha}}\leq (1-\frac{1}{C}\sigma ^{\frac{1}{p_0}})\frac{\mathop{\sup }\limits_{\mathbb{B}_{2\tilde{R}_{(t,x)}}}v^+}{d_{(t,x)}^{\alpha}}+\frac{\tilde{R}^{\frac{p}{p-1}}_{(t,x)} {\vert\vert t^pf^-\vert\vert^{\frac{1}{p-1}}_{L^{\infty}(\Omega_{2\tilde{R}_{(t,x)}}\cap \mathbb{B})}}}{d_{(t,x)}^{\alpha}}
.$$
Since $v=0 \ \text{on} \ \partial{\mathbb{B}}$, so we can find a $(t^*,x^*)$ such that
$$\frac{v^+(t,x)}{d_{(t,x)}^{\alpha}}\leq (1-\frac{1}{C}\sigma ^{\frac{1}{p_0}})\frac{v^+(t^*,x^*)}{d_{(t,x)}^{\alpha}}+\frac{\tilde{R}^{\frac{p}{p-1}}_{(t,x)} {\vert\vert t^pf^-\vert\vert^{\frac{1}{p-1}}_{L^{\infty}(\Omega_{2\tilde{R}_{(t,x)}}\cap \mathbb{B})}}}{d_{(t,x)}^{\alpha}}
.$$
Since $d_{(t^*,x^*)}\leq 3\tilde{R}_{(t,x)}$, and $\tilde{R}_{(t,x)}\leq K_0d_{(t,x)}$, so
$$ \frac{v^+(t,x)}{d_{(t,x)}^{\alpha}}\leq (1-\frac{1}{C}\sigma ^{\frac{1}{p_0}})(3{K_0})^{\alpha}\frac{v^+(t^*,y^*)}{d_{(t^*,x^*)}^{\alpha}}+\frac{\tilde{R}^{\frac{p}{p-1}}_{(t,x)} {\vert\vert t^pf^-\vert\vert^{\frac{1}{p-1}}_{L^{\infty}(\Omega_{2\tilde{R}_{(t,x)}}\cap \mathbb{B})}}}{d_{(t,x)}^{\alpha}}
.$$
 Then for all $\alpha$ such that $(1-\frac{1}{C}\sigma ^{\frac{1}{p_0}})(3{K_0})^{\alpha}< 1$, we have
    $$
\mathop{\sup}\limits_{\mathbb{B}}\frac{v^+(t,x)}{d_{(t,x)}^{\alpha}}\leq C'K_0^{\frac{p}{p-1}}d_0^{\frac{p}{p-1}-\alpha}\mathop{\sup}\limits_{\mathbb{B}} {\vert\vert t^pf^-\vert\vert^{\frac{1}{p-1}}_{L^{\infty}(\Omega_{2\tilde{R}_{(t,x)}}\cap \mathbb{B})}},$$
where $C'$ depends on $n,\ p,\ \sigma,\ K_0d_0,\ \alpha$.
    Similarly, like Corollary \ref{Theorem4.2}, we perform a similar operation on $-v$ and obtain
$$
\mathop{\sup}\limits_{\mathbb{B}}\frac{v^-(t,x)}{d_{(t,x)}^{\alpha}}\leq C'K_0^{\frac{p}{p-1}}d_0^{\frac{p}{p-1}-\alpha}\mathop{\sup}\limits_{\mathbb{B}} {\vert\vert t^pf^+\vert\vert^{\frac{1}{p-1}}_{L^{\infty}(\Omega_{2\tilde{R}_{(t,x)}}\cap \mathbb{B})}},
$$
that is
\begin{equation}\label{equation4.19}
\mathop{\sup}\limits_{\mathbb{B}}\frac{|v(t,x)|}{d_{(t,x)}^{\alpha}}\leq C' K_0^{\frac{p}{p-1}}d_0^{\frac{p}{p-1}-\alpha}\mathop{\sup}\limits_{ \mathbb{B}} {\vert\vert t^pf\vert\vert^{\frac{1}{p-1}}_{L^{\infty}(\Omega_{2\tilde{R}_{(t,x)}}\cap \mathbb{B})}}.
\end{equation}
Next, we will discuss in three different situations.

\noindent$(1)$  If $\tilde{r}< 1$, and $\tilde{r}>\frac{1}{2}d_{(t,x)}$, then for $(s,y)\in \Omega_{\tilde{r}}$, we have
    $$ d_{(s,y)}\leq d_{(t,x)}+d\left( (t,x),(s,y)\right)\leq 3\tilde{r},$$ then
    \begin{equation}\label{3.136}
    \begin{aligned}
        \omega(\tilde{r})&=\mathop{\sup}\limits_{\mathbb B\cap \Omega_{\tilde{r}} }v-\mathop{\inf}\limits_{\mathbb B\cap \Omega_{\tilde{r}} }v\leq 2\mathop{\sup}\limits_{\mathbb B\cap \Omega_{\tilde{r} }}|v|\leq 2\mathop{\sup}\limits_{\mathbb B\cap \Omega_{\tilde{r} } }\left(\frac{3\tilde{r} }{d_{(s,y)}}\right)^{\alpha}|v(s,y)|\\&\leq
       2\cdot 3^{\alpha}\tilde{r}^{\alpha}\mathop{\sup}\limits_{\mathbb B\cap \Omega_{\tilde{r} } }\frac{|v(s,y)| }{d_{(s,y)}^{\alpha}}
       \\&\leq 2\cdot 3^{\alpha} C'K_0^{\frac{p}{p-1}}d_0^{\frac{p}{p-1}-\alpha}\tilde{r}^{\alpha}\mathop{\sup}\limits_{\mathbb{B}} {\vert\vert t^pf\vert\vert^{\frac{1}{p-1}}_{L^{\infty}(\Omega_{2\tilde{R}_{(t,x)}}\cap \mathbb{B})}}.
        \end{aligned}
    \end{equation}

\noindent $(2)$ If $\tilde{r}< 1$, and $\tilde{r}\leq \frac{1}{2}d_{(t,x)} $,
then $\Omega_{\tilde{r}/ 2} \subset \Omega_{\tilde{r}}\subset\subset \mathbb{B}$. 
 Since $\Omega_{\tilde{r}}\subset\Omega_{2\tilde{R}_{(t,x)}}$, we apply  $M_{\tilde{r}}-v$ and $v-m_{\tilde{r}}$ to the Lemma \ref{lemma3.32} and  get
%

$$
\begin{aligned}
& M_{\tilde{r}}-m_{{\tilde{r}} / 2} \leq C\left(M_{\tilde{r}}-M_{{\tilde{r}} / 2}+\tilde{r}^{\frac{p}{p-1}} {\vert\vert t^pf\vert\vert^{\frac{1}{p-1}}_{L^{\infty}(\Omega_{2\tilde{R}_{(t,x)}} ) }}\right), \\
& M_{{\tilde{r}} / 2}-m_{\tilde{r}} \leq C\left(m_{{\tilde{r}} / 2}-m_{\tilde{r}}+\tilde{r}^{\frac{p}{p-1}}  {\vert\vert t^pf\vert\vert^{\frac{1}{p-1}}_{L^{\infty}(\Omega_{2\tilde{R}_{(t,x)}} )}} \right),
\end{aligned}
$$
then we have
\begin{equation}\label{3.20}
\begin{aligned}
\omega({\tilde{r}}/ 2) & \leq \frac{C-1}{C+1} \omega({\tilde{r}})+\frac{2 C}{C+1} \tilde{r}^{\frac{p}{p-1}} {\vert\vert t^pf\vert\vert^{\frac{1}{p-1}}_{L^{\infty}(\Omega_{2\tilde{R}_{(t,x)}} )}} \\
&  \leq \frac{C-1}{C+1} \omega({\tilde{r}})+\frac{2 C}{C+1} {\tilde{r}}^{\alpha}\mathop{\sup}\limits_{ \mathbb{B}} {\vert\vert t^pf\vert\vert^{\frac{1}{p-1}}_{L^{\infty}(\Omega_{2\tilde{R}_{(t,x)}}\cap \mathbb{B})}}.
\end{aligned}
\end{equation}
From \eqref{3.136} and \eqref{3.20}, we obtain
\begin{equation}\label{3.17}
\omega({\tilde{r}}/ 2)  \leq C_1 \omega({\tilde{r}})+C_2 {\tilde{r}}^{\alpha}\mathop{\sup}\limits_{ \mathbb{B}} {\vert\vert t^pf\vert\vert^{\frac{1}{p-1}}_{L^{\infty}(\Omega_{2\tilde{R}_{(t,x)}}\cap \mathbb{B})}}, \end{equation}
with $C_1=\frac{C-1}{C+1} $ and $C_2=\frac{1}{C+1}\max\{{2 C},4\cdot 3^{\alpha} C'K_0^{\frac{p}{p-1}}d_0^{\frac{p}{p-1}-\alpha}\}$.

By the Lemma 8.23 of \cite{DN}, and \eqref{3.17} we get
\begin{equation}\label{3.23}
\begin{aligned}
\omega(\tilde{r}) &\leq C_3\left(\omega(1) \tilde{r}^\beta+ C_2 {\tilde{r}^{\alpha\mu}}\mathop{\sup}\limits_{ \mathbb{B}} {\vert\vert t^pf\vert\vert^{\frac{1}{p-1}}_{L^{\infty}(\Omega_{2\tilde{R}_{(t,x)}}\cap \mathbb{B})}}\right)\\&\leq
C_3\mathop{\sup}\limits_{ \mathbb{B}} {\vert\vert t^pf\vert\vert^{\frac{1}{p-1}}_{L^{\infty}(\Omega_{2\tilde{R}_{(t,x)}}\cap \mathbb{B})}}\left( 2C'K_0^{\frac{p}{p-1}}d_0^{\frac{p}{p-1}}\tilde{r}^\beta+ C_2 {\tilde{r}^{\alpha\mu}}\right)\\&
\leq C_4\mathop{\sup}\limits_{ \mathbb{B}} {\vert\vert t^pf\vert\vert^{\frac{1}{p-1}}_{L^{\infty}(\Omega_{2\tilde{R}_{(t,x)}}\cap \mathbb{B})}}\tilde{r}^{\rho}
\end{aligned}
\end{equation}
for any $\mu \in(0,1)$, $\rho\leq \min\{\alpha\mu,\beta\}$, where $C_3=C_3(C_1)$, $\beta=\beta(C_1, \mu)$ and $C_4=C_3\cdot \max\{2C'K_0^{\frac{p}{p-1}}d_0^{\frac{p}{p-1}},C_2\}$.

\noindent $(3)$ If $\tilde{r}\geq 1$, then
\begin{equation}\label{3.16}
    \begin{aligned}
        \omega(\tilde{r})&=\mathop{\sup}\limits_{\mathbb B\cap \Omega_{\tilde{r}} }v-\mathop{\inf}\limits_{\mathbb B\cap \Omega_{\tilde{r}} }v\leq 2\mathop{\sup}\limits_{\mathbb B\cap \Omega_{\tilde{r} }}|v|\\&\leq
       2\cdot {d_0}^{\alpha}\tilde{r}^{\rho}\mathop{\sup}\limits_{\mathbb B\cap \Omega_{\tilde{r} } }\frac{|v(s,y)| }{d_{(s,y)}^{\alpha}}
       \\&\leq 2\cdot C'K_0^{\frac{p}{p-1}}d_0^{\frac{p}{p-1}}\mathop{\sup}\limits_{\mathbb{B}}{\vert\vert t^pf\vert\vert^{\frac{1}{p-1}}_{L^{\infty}(\Omega_{2\tilde{R}_{(t,x)}}\cap \mathbb{B})}}.
        \end{aligned}
    \end{equation}
    In a word, we can get
    $$ ||v(t,x)||_{\rho,\mathbb B}\leq C\mathop{\sup}\limits_{\mathbb{B}} {\vert\vert t^pf\vert\vert^{\frac{1}{p-1}}_{L^{\infty}(\Omega_{2\tilde{R}_{(t,x)}}\cap \mathbb{B})}}.$$
    Next, when $(t,x)\in \mathbb{B}$ and $(s,y)\in \partial\mathbb{B}$,
   we can choose $\left(s^{\prime}, y^{\prime}\right) \in \mathbb{B}$ such that $d_{\left(s^{\prime}, y^{\prime}\right)} \leq 1$ and $\left(s^{\prime}, y^{\prime}\right)\in [(t,x),(s,y)]$, then
$$
\begin{aligned}
\frac{|v(t, x)-v(s, y)|}{|(t, x)-(s, y)|^\rho} & \leqslant \frac{\mid v(t, x)-v\left(s^{\prime}, y^{\prime} \mid\right.}{|(t, x)-(s, y)|^\rho}+\frac{\left|v\left(s^{\prime}, y^{\prime}\right)-0\right|}{|(t, x)-(s, y)|^\rho} \\
& \leq C \mathop{\sup}\limits_{\mathbb{B}} {\vert\vert t^pf\vert\vert^{\frac{1}{p-1}}_{L^{\infty}(\Omega_{2\tilde{R}_{(t,x)}}\cap \mathbb{B})}}+\left|\frac{v\left(s^{\prime}, y^{\prime}\right)}{d^\rho\left(s^{\prime}, y^{\prime}\right)}\right|.
\end{aligned}
$$
Based on \eqref{equation4.19}, $p \leq \alpha$ and
 $d\left(s^{\prime}, y^{\prime}\right)  \leq 1 $, we deduce that $$\begin{aligned}
\left|\frac{v\left(s^{\prime}, y^{\prime}\right)}{d^\rho\left(s^{\prime}, y^{\prime}\right)}\right|&\leq\left|\frac{v\left(s^{\prime}, y^{\prime}\right)}{d^\alpha\left(s^{\prime}, y^{\prime}\right)}\right|\\&
\leq C' K_0^{\frac{p}{p-1}}d_0^{\frac{p}{p-1}-\alpha}\mathop{\sup}\limits_{\mathbb{B}} {\vert\vert t^pf\vert\vert^{\frac{1}{p-1}}_{L^{\infty}(\Omega_{2\tilde{R}_{(t,x)}}\cap \mathbb{B})}}.
\end{aligned}
$$
Finally, we can get
    $$
    ||v(t,x)||_{\rho,\overline{\mathbb B}}\leq C\mathop{\sup}\limits_{\mathbb{B}} {\vert\vert t^pf\vert\vert^{\frac{1}{p-1}}_{L^{\infty}(\Omega_{2\tilde{R}_{(t,x)}}\cap \mathbb{B})}}.
    $$


\end{proof}

 \begin{proof}[\textbf{Proof of Theorem \ref{B}}]
Let $H_{j}$ be a sequence of smooth and bounded domains such that
$$H_{j}\subset\subset H_{j+1}\subset\subset \mathbb{B} \ \ \ \text{and } \ \ \ \cup_{j} H_{j} =\mathbb{B},$$
and
by \cite{IF22} Proposition 1, we can find a solution $u_j\in C(\overline{H}_j)$ for
\begin{equation}\label{H9}
    \begin{cases}
F((t,x),D_\mathbb{B}u,D_\mathbb{B}^2u)=0  \ \ \ & (t,x)\in  H_{j},\\
u=0  \ & (t,x)\in    \partial{H_{j}}.
\end{cases}
\end{equation}
Through Theorem \ref{C}, we can get
$$\begin{aligned}
||u_j(t,x)||_{\rho,\overline{H}_j}&\leq C\mathop{\sup}\limits_{ H_j} {\vert\vert t^pf\vert\vert^{\frac{1}{p-1}}_{L^{\infty}(\Omega_{2\tilde{R}_{(t,x)}}\cap H_j)}}\\&\leq C\mathop{\sup}\limits_{\mathbb{B}} {\vert\vert t^pf\vert\vert^{\frac{1}{p-1}}_{L^{\infty}(\Omega_{2\tilde{R}_{(t,x)}}\cap \mathbb{B})}}.\\
 \end{aligned}$$
 Therefore we can find a subsequence of $\{u_{j_k}\}$ converges uniformly to $v_{\bar{O}}$ in $\bar{O}$, where $O\subset \mathbb{B}$ is a bounded open set. By \cite{GPA} Proposition 4.1,  $v_{\bar{O}}$ is the viscosity solution of \eqref{equation3.12} in $ O$,
 \begin{equation}\label{equation3.12}
 F((t,x),D_\mathbb{B}u,D_\mathbb{B}^2u)=0 .\end{equation}
  Let $\bar{O}_{i}$ be a sequence compact sets such that
$$O_{i}\subset\subset O_{i+1}\subset\subset \mathbb{B} \ \ \ \text{and } \ \ \ \cup_{i} O_{i} =\mathbb{B},$$ and we define $$ v=\lim\limits_{i\to\infty}v_{O_i}.$$
For all $(t_0,x_0)\in \mathbb{B}$, we can choose $i$ enough large such that $(t_0,x_0)\in O_i$.
If $v-{\phi}$ attains a  local minimum at $(t_0,x_0)\in \mathbb{B}$ for all ${\phi}\in C^2(\mathbb{B})$, then $v_{O_i}-{\phi}$ attains  a  local minimum at $(t_0,x_0)\in \mathbb{B}$. So the following inequality holds $$F((t,x),D_\mathbb{B}{\phi},D_\mathbb{B}^2{\phi})\leq 0.
 $$
$v$ is the viscosity solution of \eqref{equation3.12}
  in $ \mathbb{B}$.

   For all $(t_0,x_0)\in \mathbb{B}$, there exists $ O_i$ such that $(t_0,x_0)\in O_i$. Since $\{u_{j_k}\}$ converges uniformly to $v_{\bar{O}_i}$ in $\bar{O_i}$, we choose $j_k$ enough large such that $\bar{O}_i\in H_{j_k},$ and
 $$ |v(t_0,x_0)-u_{j_k}(t_0,x_0)|\leq \frac{\varepsilon}{3}. $$
 For all $(s,y)\in \partial\mathbb{B}$, when $$|(t_0,x_0),(s,y)|\leq {\frac{\varepsilon}{3C}}^{\frac{1}{\rho}} {\mathop{\sup}\limits_{ \mathbb{B}} {\vert\vert t^pf\vert\vert^{\frac{1}{p-1}}_{L^{\infty}(\Omega_{2\tilde{R}_{(t,x)}}\cap \mathbb{B})}}}^{\frac{-1}{\rho}},$$ we have
 $$
 \begin{aligned}|v(t_0,x_0)-0|&\leq  |v(t_0,x_0)-u_{j_k}(t_0,x_0)|+ |u_{j_k}(t_0,x_0)-0|\\&\leq
 \frac{\varepsilon}{3}+\frac{\varepsilon}{3}<\varepsilon,
  \end{aligned}$$
 that is
 $$ \lim\limits_{(t_0,x_0)\to \partial\mathbb{B}}v(t_0,x_0)=0.$$
 We can define $v\big|_{\partial\mathbb{B}}=0$,
 then $v$ satisfies \eqref{H8}.
And we obtain \eqref{T1.10} by Theorem \ref{C}.
\end{proof}

\section{\textbf{Comparison principle of viscosity solution}}
In this section, we give the proof of Theorem \ref{A}  inspired by article \cite{MHP}, \cite{HP2},\ and \cite{HMS}, 
that is, the comparison principle of the viscosity solutions of the equation \eqref{eq:11}.

Compared to the research work on viscosity solutions in \cite{MHP} and \cite{HP2}, We mainly have the following two innovative points. Firstly, in response to the challenges caused by the absence of zero order terms, we transform the original equation \eqref{eq:11} into a new one \eqref{H9} by compounding a special monotonically increasing function. Through the comparison principle of viscosity solutions of the new equation  \eqref{H9} and the properties of the monotonically increasing function, we ultimately obtain the comparison principle of viscosity solutions to \eqref{eq:11}. Secondly, the degenerate cone gradient operator itself poses some difficulties in analysis. We construct the contradiction by selecting the appropriate maximum value point of the auxiliary function and obtain the  comparison principle of viscosity solutions through proof by contradiction.

Before showing the Theorem \ref{A}, let's first give some lemmas  and  propositions.
 \begin{lemma}\label{Proposition3.1}Let $G$ be a real-valued continuous function on $\Gamma=\mathbb B\times R \times R^n \times S^n$. Let $u$ be an $u.s.c.$ viscosity subsolution of
$$
G((t,x),u,\nabla_\mathbb{B}u,\nabla_\mathbb{B}^2u)=0 \ \ \ \ \ \ \ \ (t,x)\in \mathbb B.
$$
Let $\epsilon >0$, and define a continuous function $G^{\epsilon}$ on $\Gamma _\epsilon =\mathbb{B}_\epsilon \times R \times R^n \times S^n$ by
$$
G^\epsilon((t,x),r,p,X)=\max \{G((s,y),\tau,p,X)\ \big{|}\ |(e^s,y)-(e^t,x)|^2+(\tau-r)^2\leq \epsilon^2\},
$$
for $((t,x),r,p,X)\in \Gamma_\epsilon$. Then the upper $\epsilon$-envelope $u^\epsilon$ of $u$ is a viscosity subsolution of
$$
G^\epsilon((t,x),u,\nabla_\mathbb{B}u,\nabla_\mathbb{B}^2u)=0 \ \ \ \ \ \ \ \ (t,x)\in \mathbb B_\epsilon.
$$
\end{lemma}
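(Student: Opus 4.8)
The plan is to adapt the classical argument (as in the User's Guide / Ishii's sup‑convolution lemma) to the present conical setting, where the "sup‑convolution" is taken with respect to the Euclidean variables $(e^t,x)$ rather than $(t,x)$ directly. First I would recall why $u^\epsilon$ is a well‑defined u.s.c.\ (in fact locally semiconvex in the variable $\zeta:=(e^t,x)$) function on $\mathbb B_\epsilon$: boundedness of $u$ from above on the relevant compact set guarantees the max is attained, and $u^\epsilon$ is, up to the additive term $\sqrt{\epsilon^2-|\zeta-\eta|^2}$, a supremum of translates of a fixed concave function, hence semiconvex on $\mathbb B_\epsilon$ in $\zeta$. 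Semiconvexity is the structural gain: it lets us use that at a point where $u^\epsilon-\phi$ has a local maximum there is a genuine second‑order subjet, and — crucially — that the maximizing point $(s,y)$ in the definition is essentially unique and varies controllably.

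The core step: fix $\phi\in C^2(\mathbb B_\epsilon)$ and a local maximum point $(t_0,x_0)$ of $u^\epsilon-\phi$. Let $(s_0,y_0)$ realize the maximum in the definition of $u^\epsilon(t_0,x_0)$, so that $u^\epsilon(t_0,x_0)=u(s_0,y_0)+\sqrt{\epsilon^2-|\zeta_0-\eta_0|^2}$ with $\zeta_0=(e^{t_0},x_0)$, $\eta_0=(e^{s_0},y_0)$. The standard observation is that the function
$$
(t,x)\longmapsto u(t,x) - \Big(\phi\big(\Psi^{-1}(\zeta)\big) - \sqrt{\epsilon^2-|\zeta-\eta_0|^2}\Big)\Big|_{\zeta=(e^t,x)} + \text{const}
$$
attains a local maximum at $(s_0,y_0)$, where $\Psi(t,x)=(e^t,x)$; this is because any improvement there would, via the definition of the sup‑envelope, give an improvement of $u^\epsilon-\phi$ near $(t_0,x_0)$, contradicting maximality. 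Hence the $C^2$ test function $\psi(t,x):=\phi(t,x)-\sqrt{\epsilon^2-|(e^t,x)-\eta_0|^2}$ (pulled back through $\Psi$ where needed) touches $u$ from above at $(s_0,y_0)$, so the subsolution property of $u$ gives $G\big((s_0,y_0),u(s_0,y_0),\nabla_{\mathbb B}\psi(s_0,y_0),\nabla_{\mathbb B}^2\psi(s_0,y_0)\big)\ge 0$. The final task is to check that $\nabla_{\mathbb B}\psi(s_0,y_0)=\nabla_{\mathbb B}\phi(t_0,x_0)$ and $\nabla_{\mathbb B}^2\psi(s_0,y_0)\le\nabla_{\mathbb B}^2\phi(t_0,x_0)$ — the gradient identity because the translate parameter $\eta_0$ is chosen so the first‑order terms from the square‑root cancel exactly (this is where the Euclidean shift is used: $\nabla_\zeta\sqrt{\epsilon^2-|\zeta-\eta|^2}$ matches up cleanly), and the Hessian inequality because $-\sqrt{\epsilon^2-|\cdot|^2}$ is convex, so subtracting it can only decrease the Hessian of $\psi$ relative to $\phi$ after accounting for the common maximum structure. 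Feeding these into the definition of $G^\epsilon$ — which is the sup of $G((s,y),\tau,p,X)$ over $|(e^s,y)-(e^t,x)|^2+(\tau-r)^2\le\epsilon^2$, a constraint satisfied by $(s_0,y_0,u(s_0,y_0))$ relative to $(t_0,x_0,u^\epsilon(t_0,x_0))$ — together with degenerate ellipticity ($X\le Y\Rightarrow G(\cdot,X)\le G(\cdot,Y)$ reversed appropriately) yields $G^\epsilon\big((t_0,x_0),u^\epsilon(t_0,x_0),\nabla_{\mathbb B}\phi(t_0,x_0),\nabla_{\mathbb B}^2\phi(t_0,x_0)\big)\ge 0$, which is the claim.

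The main obstacle I expect is bookkeeping the change of variables $t=\ln(e^t)$ throughout: the operator $\nabla_{\mathbb B}=(t\partial_t,\partial_x)$ is the Euclidean gradient in the logarithmic coordinate $a=\ln t$ (as the paper's "Suitability" Remark already records), whereas the sup‑envelope is built with the Euclidean distance in $\zeta=(e^t,x)=(e^a,x)$, i.e.\ in a \emph{third} coordinate. So one must be careful that "touching from above" and the first/second‑order comparisons are performed consistently — most cleanly by passing everything to the $a$‑coordinate, where $\nabla_{\mathbb B}$ becomes an honest gradient and $u^\epsilon$ is a sup‑convolution of $u(e^a\!\!\to\!\cdot)$ against the Euclidean kernel in $(e^a,x)$; then the only genuinely non‑routine point is the exact matching of the first‑order terms, i.e.\ verifying that the perturbation kernel $\sqrt{\epsilon^2-|\cdot|^2}$ is smooth and strictly concave on the open ball so that the maximizing $\eta_0$ lies strictly inside $|\zeta_0-\eta_0|<\epsilon$ (guaranteed by the definition of $\mathbb B_\epsilon$, which excludes points within $\epsilon$ of $\partial\mathbb B$), legitimizing the differentiation. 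The rest is the standard semiconvexity/subjet machinery and I would cite the User's Guide for those routine pieces rather than reprove them.
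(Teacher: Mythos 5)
Your proposal follows the same high-level route as the paper's own proof: express $u^\epsilon$ as a sup-convolution, exploit the chain inequality at a local maximum of $u^\epsilon-\phi$, pick a maximizer $(s_0,y_0)$ in the definition, shift the test function so that it touches $u$ from above at $(s_0,y_0)$, read off the viscosity inequality there, and feed the resulting point $(s_0,y_0,u(s_0,y_0))$ into the constraint set defining $G^\epsilon$. This is exactly the structure of the paper's Lemma~4.1, so in that sense the approaches coincide.

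However, you correctly identify the crux --- ``the exact matching of the first-order terms'' --- but you do not actually close that step, and as stated it fails. You propose to carry out the shift in the variable $\zeta=(e^t,x)$, arguing that $\nabla_\zeta\sqrt{\epsilon^2-|\zeta-\eta|^2}$ then ``matches up cleanly.'' That is true for $\nabla_\zeta$, but the viscosity inequality needs $\nabla_{\mathbb B}=(t\partial_t,\partial_x)$, and these are related by a \emph{position-dependent} change of frame: $t\partial_t = (te^t)\,\partial_{\zeta_1}$ when $\zeta_1=e^t$. Consequently, after the Euclidean shift in $\zeta$, the $\nabla_{\mathbb B}$-gradient of the transplanted test function at $(s_0,y_0)$ picks up the factor $s_0e^{s_0}$ where $\nabla_{\mathbb B}\phi(t_0,x_0)$ carries $t_0e^{t_0}$, so the two do \emph{not} agree unless $s_0=t_0$. (The same objection applies to the paper's additive shift $s=t-t_0+s_0$, $y=x-x_0+y_0$: one finds $s_0\partial_1\phi(t_0,x_0)$ where $t_0\partial_1\phi(t_0,x_0)$ is required, and moreover the square-root term is no longer constant along that shift.) The second-order comparison suffers from the same defect.

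The correct way to make the argument go through --- consistent with the cone distance $d((t,x),(s,y))^2=(\ln t-\ln s)^2+|x-y|^2$ already introduced in Section~2 --- is to pass to $a=\ln t$, where $\nabla_{\mathbb B}$ becomes the honest Euclidean gradient, and to build the sup-convolution with the kernel $\sqrt{\epsilon^2-(\ln t-\ln s)^2-|x-y|^2}$. The admissible shift is then the \emph{multiplicative} one in $t$ (equivalently, an additive translation in $a$), namely $s=t\,s_0/t_0$, $y=x-x_0+y_0$; this leaves the kernel unchanged and commutes with $\nabla_{\mathbb B}$, so that the shifted test function reproduces $\nabla_{\mathbb B}\phi(t_0,x_0)$ and $\nabla^2_{\mathbb B}\phi(t_0,x_0)$ exactly. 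Your ``pass to the $a$-coordinate'' remark is the right instinct, but you then revert to shifting in $\zeta=(e^t,x)$, which undoes it; the fix is to keep both the kernel and the shift in the $a$-coordinate throughout.

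Two smaller points. First, the test function you write, $\psi(t,x)=\phi(t,x)-\sqrt{\epsilon^2-|(e^t,x)-\eta_0|^2}$, does not touch $u$ from above at $(s_0,y_0)$ as written; the function that does is the shifted copy of $\phi$ alone (or, alternatively, the function $(s,y)\mapsto u^\epsilon(t_0,x_0)-\sqrt{\epsilon^2-d^2((t_0,x_0),(s,y))}$, whose derivatives at $(s_0,y_0)$ must then be identified with those of $\phi$ at $(t_0,x_0)$ via the smooth touching of $\phi$ and the square-root at $(t_0,x_0)$). Second, the observation that $|\zeta_0-\eta_0|<\epsilon$ strictly (so the kernel is $C^\infty$ at the relevant point) is needed and worth retaining, but it is established in the paper's Proposition~4.2, not in this lemma; for this lemma alone it suffices that a maximizer exists and lies in the closed ball.
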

\begin{proof}
Let $\phi \in C^2\left(\mathbb{B}_{\epsilon}\right)$ and $(t_0,x_0) \in \mathbb{B}_{\epsilon}$ satisfy
$$
\left(u^{\epsilon}-\phi\right)\left(t_0,x_0\right)=\text{local} \max \left(u^{\epsilon}-\phi\right) .
$$
By the definition of the upper $\epsilon$-envelope, we see that
$$
u^{\epsilon}\left(t_0,x_0\right)=u\left(s_0,y_0\right)+\left(\epsilon^2-\left|(e^{t_0},x_0)-(e^{s_0},y_0)\right|^2\right)^{1 / 2},$$
 for some $(s_0,y_0) $ satisfying  $|(e^{s_0},y_0)-(e^{t_0},x_0)|\leq \epsilon$.
And hence
$$
\begin{aligned}
&u(s,y)+\left(\epsilon^2-|(e^{t},x)-(e^{s},y)|^2\right)^{1 / 2}-\phi(t,x)\\ \leq &u\left(s_0,y_0\right)+\left(\epsilon^2-\left|(e^{t_0},x_0)-(e^{s_0},y_0)\right|^2\right)^{1 / 2}-\phi\left(t_0,x_0\right),
\end{aligned}
$$
for all $(t,x) \in U,$ a neighborhood of $(t_0,x_0)$, and $(s,y)$ satisfying $|(e^{s},y)-(e^{t},x)|\leq \epsilon$. We choose $y=x-x_0+y_0$ and $s=t-t_0+s_0$ in this last inequality to find that the function $$(s,y) \rightarrow u(s,y)-\phi\left(s-s_0+t_0,y-y_0+x_0\right)$$ attains a local  maximum at $(s_0,y_0)$. Therefore, by the definition of the viscosity subsolution, we have
$$
G\left((s_0,y_0), u\left(s_0,y_0\right), \nabla_{\mathbb{B}} \phi\left(t_0,x_0\right), \nabla_{\mathbb{B}}^2 \phi\left(t_0,x_0\right)\right) \geq 0 .
$$
Since $$\left|(e^{s_0},y_0)-(e^{t_0},x_0)\right|^2+\left(u\left(s_0,y_0\right)-u^{\epsilon}\left(t_0,x_0\right)\right)^2=\epsilon^2,$$ the proof is concluded.
\end{proof}
The proof of Proposition \eqref{Proposition3.2} refers to the idea of Proposition 4.3 in \cite{HMS}. Compared to classical gradient operators, the cone degenerate operator makes the analysis of Hessian matrix  more complex. We obtained \eqref{eq:22}by discussing in categories and applying properties of matrices such as rank, eigenvalues, trace, etc.
 \begin{Proposition}\label{Proposition3.2}
 Given $\epsilon>0$ and $\phi \in C^1(\mathbb{B})$. Let $u\in USC(\mathbb{B})$ and $u^{\epsilon}$ is the upper $\epsilon$-envelope of $u$. Assume $u^{\epsilon}-\phi$ attains its maximum at $(t_0,x_0) \in \mathbb{B}_{\epsilon}$, then there exist a constants  $C$, depending on $\epsilon$ and $\nabla\phi(t_0,x_0)$, and a neighborhood $U$ of $(t_0,x_0)$ such that for any $(t,x)\in U$ the following facts hold,
\begin{equation}\label{eq:22}
 \nabla_{\mathbb{B}}^2u^{\epsilon}(t,x) \geq
 CI,
\end{equation}
\begin{equation}
\nabla^2u^{\epsilon}(t,x) \geq
CI,
\end{equation}
with $C=-\epsilon^2\left(\epsilon^2-(\delta+2 \gamma)^2\right)^{-3 / 2}.$
\end{Proposition}
 \begin{proof}We choose $(s_0,y_0) $ satisfying $|(e^{s_0},y_0)-(e^{t_0},x_0)|< \epsilon$ such that
$$
u^\epsilon\left(t_0,x_0\right)=u\left(s_0,y_0\right)+\left(\epsilon^2-\left|(e^{t_0},x_0)-(e^{s_0},y_0)\right|^2\right)^{1 / 2},
$$
and observe that $\left((t_0,x_0), (s_0,y_0)\right)$ is a maximum point of the function $$\left((t,x), (s,y)\right) \rightarrow u(s,y)+ \left(\epsilon^2-|(e^{t},x)-(e^{s},y)|^2\right)^{1 / 2}-\phi(t,x).$$

First of all we show that $$\left|(e^{t_0},x_0)-(e^{s_0},y_0)\right|<\epsilon.$$
We suppose $\left|(e^{t_0},x_0)-(e^{s_0},y_0)\right|=\epsilon,$ and get a contradiction. The function $\psi$ on $[0,1]$ defined by
\eqref{4.5}
\begin{equation}\label{4.5}
\begin{aligned}
\psi(\tau)&=u\left(s_0,y_0\right)+\left(\epsilon^2-(1-\tau)^2\left|(e^{t_0},x_0)-(e^{s_0},y_0)\right|^2\right)^{1 / 2}\\&-\phi\left(t_0+\tau(s_0-t_0),x_0+\tau(y_0-x_0)\right),
\end{aligned}
\end{equation}
which has its maximum at $\tau=0$. Therefore,
$$
\frac{\psi(\tau)-\psi(0)}{\tau} \leq 0 \quad\quad\quad \text { for } 0<\tau\leq 1,
$$
we send $\tau \downarrow 0$, then $\psi'(\tau)\rightarrow \infty$ contradicts the fact.

The inequality
$$
u^\epsilon(t,x) \geq u\left(s_0,y_0\right)+\left(\epsilon^2-\left|(e^{t_0},x_0)-(e^{s_0},y_0)\right|^2\right)^{1 / 2},
$$
for
$$|(e^t,x)-(e^{t_0},x_0)|< r,$$
where $r=\epsilon-\left|(e^{t_0},x_0)-(e^{s_0},y_0)\right|$, yields that
$$
\liminf _{(t,x) \rightarrow (t_0,x_0)} u^\epsilon(t,x) \geq u^\epsilon\left(t_0,x_0\right) .
$$
This together with the upper semicontinuity of $u^{\epsilon}$ proves that $u^{\epsilon}$ is continuous at $(t_0,x_0)$.
Now the function $\psi$ can be defined near $\tau=0$ by \eqref {4.5} and attains a maximum at $\tau=0$. Hence $\psi^{\prime}(0)=0$ and so, by a simple computation, we see that
\begin{equation}\label{4.44}
(s_0,y_0)=(t_0,x_0)-\epsilon\left(|\nabla \phi(t_0,x_0)|^2+1\right)^{-1 / 2} \nabla \phi\left(t_0,x_0\right) .
\end{equation}
This formula uniquely determines the point $(s_0,y_0)$. 
In other words,
\begin{equation}\label{4.45}
u(s,y)+\left(\epsilon^2-\left|(e^{t_0},x_0)-(e^s,y)\right|^2\right)^{1 / 2}<u^{\epsilon}\left(t_0,x_0\right),
\end{equation}
for $$(s,y) \in \{(s,y)\big|\ |(e^s,y)-(e^{t_0},x_0)| < \epsilon\} \setminus{(s_0,y_0)}. $$
We fix any $\delta$ such that $\left|(e^{t_0},x_0)-(e^{s_0},y_0) \right|<\delta<\epsilon$, then the above inequality implies that there is a $\gamma>0$ such that
\begin{equation}
\begin{aligned}\label{4.46}
&\max \left\{u(s,y)+\left(\epsilon^2-|(e^t,x)-(e^s,y) |^2\right)^{1 / 2}\big|\ \delta \leq|(e^t,x)-(e^s,y) | \leq \epsilon\right\} \\<&u^\epsilon(t,x),
 \ \quad \quad\quad \ \text{for} \ (t,x) \in\{(t,x)\big|\ |(e^t,x)-(e^{t_0},x_0)|< \gamma\}.
 \end{aligned}
 \end{equation}

Indeed, if this were false, then there would be sequences $\left\{t_k,x_k\right\}$ and $\left(s_k,y_k\right\}$ of points of $\mathbb{R}^n_{+}$ such that
$$
(t_k,x_k) \rightarrow (t_0,x_0) \quad\text { as } k \rightarrow \infty \text {, }
$$
and
$$
u^\epsilon\left(t_k,x_k \right)=u\left(s_k,y_k\right)+\left(\epsilon^2-\left|(e^{t_k},x_k) -(e^{s_k},y_k)\right|^2\right)^{1 / 2},$$
where $\delta \leq\left|(e^{t_k},x_k) -(e^{s_k},y_k)\right| \leq \epsilon .
$
Passing to the limit, we find that
$$
u^\epsilon\left(t_0,x_0\right) \leq u(\bar{s},\bar{y})+\left(\epsilon^2-\left|(e^{t_0},x_0)-(e^{\bar{s}},\bar{y})\right|^2\right)^{1 / 2}, $$
for some
$$
({\bar{s}},\bar{y}) \in\{(s,y)\ \big|\ \delta\leq|(e^s,y)-(e^{t_0},x_0)|\leq\epsilon\},
$$
which contradicts \eqref{4.45} and hence proves \eqref{4.46} for some $\gamma>0$. Fix $0<\gamma<$ $\frac{1}{3}(\epsilon-\delta)$ so that \eqref{4.46} holds. Inequality \eqref{4.46} guarantees that, for $$(t,x) \in\{(s,y)\  \big|\ |(e^s,y)-(e^{t_0},x_0)|<\gamma\},$$
$$
\begin{aligned}
u^{\epsilon}(t,x)=\max \bigg\{&u(s,y)+\left(\epsilon^2-|(e^t,x)-(e^s,y)|^2\right)^{1 / 2}\ \big{|}\\&(s,y) \in\{(s,y)\ \big{|} \ |(e^s,y)-(e^{t},x)|<\delta\}\bigg\}.
\end{aligned}
$$
And hence
$$
\begin{aligned}
u^\epsilon(t,x)=\max \bigg\{&u(s,y)+\left(\epsilon^2-|(e^t,x)-(e^s,y)|^2\right)^{1 / 2}\ \big{|}\ \\& (s,y) \in\{(s,y)\ \big{|}\ | (e^s,y)-(e^{t_0},x_0)|<\delta+\gamma\}\bigg\}.
\end{aligned}
$$
Observe that we define a function
$$
f_{(s,y)}(t,x)=u(s,y)+\left(\epsilon^2-|(e^t,x)-(e^s,y)|^2\right)^{1 / 2},
$$
on $ \{(t,x)\ \big{|}\ |(e^t,x)-(e^{t_0},x_0)|<\gamma\}$, for
$$(s,y) \in\{(s,y)\ \big{|}\ |(e^s,y)-(e^{t_0},x_0)|<\delta+\gamma\}.$$
Then
by calculation, we have
$$
\begin{aligned}&\nabla_{\mathbb{B}}^2f_{(s,y)}(t,x)\\=&
    \begin{pmatrix}
        -t^2 k_1^2M^{\frac{-3}{2}}-(t^2+tk_1)M^{\frac{-1}{2}}&
        -tk_1k_2M^{\frac{-3}{2}}&\cdots& - tk_1k_nM^{\frac{-3}{2}}\\
        -tk_1k_2M^{\frac{-3}{2}}& -k_2^2M^{\frac{-3}{2}}-M^{\frac{-1}{2}}&\cdots&- k_2k_nM^{\frac{-3}{2}}\\
       \vdots &\vdots &\ddots &\vdots \\
       -tk_1k_nM^{\frac{-3}{2}}&-k_2k_nM^{\frac{-3}{2}} &\cdots&-k_n^2M^{\frac{-3}{2}}-M^{\frac{-1}{2}}
    \end{pmatrix}\\=&
     \begin{pmatrix}
        -t^2 k_1^2M^{\frac{-3}{2}}&
        -tk_1k_2M^{\frac{-3}{2}}&\cdots& - tk_1k_nM^{\frac{-3}{2}}\\
        -tk_1k_2M^{\frac{-3}{2}}& -k_2^2M^{\frac{-3}{2}}&\cdots&- k_2k_nM^{\frac{-3}{2}}\\
       \vdots &\vdots &\ddots &\vdots \\
       -tk_1k_nM^{\frac{-3}{2}}&-k_2k_nM^{\frac{-3}{2}} &\cdots&-k_n^2M^{\frac{-3}{2}}
    \end{pmatrix}\\+&
     \begin{pmatrix}
       -(t^2+tk_1)M^{\frac{-1}{2}}&
   0&\cdots&0\\
       0&-M^{\frac{-1}{2}}&\cdots&0\\
       \vdots &\vdots &\ddots &\vdots \\
      0&0 &\cdots&-M^{\frac{-1}{2}}
    \end{pmatrix}\\
    =&A+B,
    \end{aligned} $$
with $ M=\epsilon^2-|(e^t,x)-(e^s,y)|^2,\ k_1=t-s,\ k_{i+1}=x_i-y_i,\ (i=1, \cdots, n-1).$

We will simplify the matrix $\nabla_{\mathbb{B}}^2f_{(s,y)}(t,x)$ based on different situations.\\
$(1)$ $k_i=0$, $(i=1, \cdots, n)$.

We simplify that
$$
\nabla_{\mathbb{B}}^2f_{(s,y)}(t,x)=
   \begin{pmatrix}
       -t^2M^{\frac{-1}{2}}&0&\cdots&0\\
       0&-M^{\frac{-1}{2}}&\cdots&0\\
       \vdots &\vdots &\ddots &\vdots \\
      0&0 &\cdots&-M^{\frac{-1}{2}}
    \end{pmatrix}.
$$
Since $t\in (\epsilon-\gamma,1-\epsilon+\gamma) $ and $M\in (\epsilon^2-(\delta+2\gamma)^2,\epsilon^2]$, we have
$$\nabla_{\mathbb{B}}^2f_{(s,y)}(t,x)\geq - \left(\epsilon^2-(\delta+2\gamma\right)^2)^{-\frac{1}{2}}I.$$
$(2)$ $k_i\  (i=1,\cdots, n)$ are not all zero, we will discuss in two different situations.

 If $k_1\neq 0$, through the elementary row transformations, 
 A can be simplified to
$$
 \begin{pmatrix}
      -t^2 k_1^2M^{\frac{-3}{2}}&
        -tk_1k_2M^{\frac{-3}{2}}&\cdots& - tk_1k_nM^{\frac{-3}{2}}\\\\
       0&0&\cdots&0\\
       \vdots &\vdots &\ddots &\vdots \\
      0&0 &\cdots&0
    \end{pmatrix}.
  $$
Because the rank of matrix A is 1, there is only one non-zero eigenvalue of A,
 $$\lambda_{A}= tr(A)= -(t^2 k_1^2+ k_2^2+\cdots +k_n^2)M^{\frac{-3}{2}}.$$
Since $t\in (\epsilon-\gamma,1-\epsilon+\gamma) $, we have
$$
\lambda_{A}\ge -M^{\frac{-3}{2}}(k_1^2+ k_2^2+\cdots +k_n^2)\ge -\frac{(\delta+2\gamma)^2}{\left(\epsilon^2-(\delta+2\gamma\right)^2)^{\frac{3}{2}}},
$$
so
$$
A+\frac{(\delta+2\gamma)^2}{\left(\epsilon^2-(\delta+2\gamma\right)^2)^{\frac{3}{2}}}I\geq 0.
$$
Because
$$
\begin{aligned}
-(t^2+tk_1)&\ge -(1-\epsilon+\gamma)(1-\epsilon+\gamma+\delta+2\gamma)\\
&\ge -(1-\epsilon+\gamma)(1-\epsilon+3\gamma+\delta)\\
&\ge -(1-\epsilon+\gamma)(1-\epsilon+\epsilon-\delta+\delta)\\
&=-(1-\epsilon+\gamma)\\
&\ge-1,
\end{aligned}
$$
then
$$B+\frac{1}{\left(\epsilon^2-(\delta+2\gamma\right)^2)^{\frac{1}{2}}}I\geq 0,$$
so $$
\begin{aligned}
\nabla_{\mathbb{B}}^2f_{(s,y)}(t,x) &\geq -\frac{\epsilon^2}{\left(\epsilon^2-(\delta+2\gamma\right)^2)^{\frac{3}{2}}}I.
\end{aligned}
$$
If $k_1=0$, then there exists $i \neq1 $ such that $k_i\neq 0$. Because the status of $x_i\ (i=1, \cdots, n-1)$  are equivalent, let's assume that $k_2\neq0$.
Similarly, A can be simplified into the following form
$$\begin{pmatrix}
       0 &0&\cdots& 0&0\\
       0&- k_2^2M^{\frac{-3}{2}}&\cdots&- k_2k_{n-1}M^{\frac{-3}{2}}&- k_2k_nM^{\frac{-3}{2}}\\
       \vdots &\vdots &\ddots &\vdots &\vdots\\
      0&0&\cdots&0&0
      \\0&0 & \cdots &0&0
    \end{pmatrix} .$$
The rank of matrix A is $1$, there is only one non-zero eigenvalue of A,
$$
     \lambda_{A}= tr(A)= -( k_2^2+\cdots +k_n^2)M^{\frac{-3}{2}}.
$$
Since $t\in (\epsilon-\gamma,1-\epsilon+\gamma) $, we have
$$
A+\frac{(\delta+2\gamma)^2}{\left(\epsilon^2-(\delta+2\gamma\right)^2)^{\frac{3}{2}}}I\geq 0,$$
   $$B+\frac{1}{\left(\epsilon^2-(\delta+2\gamma\right)^2)^{\frac{1}{2}}}I\geq 0,$$
then
$$
\nabla_{\mathbb{B}}^2f_{(s,y)}(t,x) \geq -\frac{\epsilon^2}{\left(\epsilon^2-(\delta+2\gamma\right)^2)^{\frac{3}{2}}}I.
$$
Overall,
$$
 \nabla_{\mathbb{B}}^2f_{(s,y)}(t,x) \geq -\epsilon^2\left(\epsilon^2-(\delta+2 \gamma)^2\right)^{-3 / 2} I.
$$
Similarly,$$
\nabla^2f_{(s,y)}(t,x) \geq-\epsilon^2\left(\epsilon^2-(\delta+2 \gamma)^2\right)^{-3 / 2} I .
$$

Finally we remark that the choice of $\delta$ depends only on $\epsilon$ and $\left|\nabla \phi\left(t_0,x_0\right)\right|$ while the positive number $\gamma$ can be chosen as small as desired.
\end{proof}

 \begin{lemma}[
 \cite{HI26}]
 \label{Lemma3.1}
 Let $U$ be a bounded open subset of $\mathbb{R}^m$ and $w$ a Lipschitz continuous function on $\overline{U}$. Assume $w(y)=\mathop{\max}\limits_U w>\mathop{\max}\limits_{\partial U} w$ for some $y \in U$ and that $w$ is semi-convex on $U$. Then for any $\varepsilon>0$ there are points $p \in \mathbb{R}^m$ and $z \in U$ satisfying $|p| \leq \varepsilon$ such that the function $x \rightarrow w(x)+\langle p, x\rangle$ on $U$ attains a maximum at $z$ and has the second differential at $z$.
 \end{lemma}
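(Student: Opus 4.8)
The statement is a form of Jensen's lemma, so the plan is to prove it by combining Alexandrov's second-differentiability theorem for semiconvex functions with a measure estimate on the \emph{contact set} of $w$. As a preliminary reduction I would localize: since $w(y)=\max_U w>\max_{\partial U}w$, there is $\varepsilon_0\in(0,\varepsilon]$ such that for every $p$ with $|p|<\varepsilon_0$ the perturbed function $w_p:=w+\langle p,\cdot\rangle$ attains its maximum over $\overline U$ at an interior point (on $\partial U$ one has $w_p\le\max_{\partial U}w+\varepsilon_0\,\mathrm{diam}\,U<w(y)\le\sup_{\overline U}w_p$); and, passing to a small convex neighbourhood of $y$ and subtracting a small multiple of $|x-y|^2$ — which preserves Lipschitz continuity and semiconvexity — I may also assume that $U$ is convex. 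Writing $g:=w+\tfrac{K}{2}|x|^2$ with $K$ large, $g$ is convex on $U$, so by Alexandrov's theorem $g$, and hence $w$, has a second-order Taylor expansion at every point of $U$ outside a Lebesgue-null set $\mathcal N$.

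Next I would use that at any interior maximum point $x_p$ of $w_p$ the vector $-p$ is a supergradient of $w$; since $w$ is semiconvex, the mere existence of a supergradient at a point forces $w$ to be differentiable there, with $\nabla w(x_p)=-p$. Introducing the contact set
\[
\Gamma:=\bigl\{x\in U:\ w\ \text{is differentiable at}\ x,\ |\nabla w(x)|<\varepsilon_0,\ w(y')\le w(x)+\langle\nabla w(x),y'-x\rangle\ \text{for all}\ y'\in U\bigr\},
\]
one gets $x_p\in\Gamma$ for all $|p|<\varepsilon_0$, hence $\nabla w(\Gamma)\supseteq B_{\varepsilon_0}(0)$. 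The core step is to show that $\nabla w$ is Lipschitz on $\Gamma$: rewriting the contact inequality in terms of $g$ gives, for $x\in\Gamma$ and all $y'\in U$,
\[
0\le g(y')-g(x)-\langle\nabla g(x),y'-x\rangle\le\tfrac{K}{2}|y'-x|^2,
\]
the lower bound by convexity of $g$ and the upper bound by a direct computation from the definition of $\Gamma$; applying this at two contact points and adding yields $\langle\nabla g(x)-\nabla g(x'),x-x'\rangle\le K|x-x'|^2$, and together with the squeeze this forces $\nabla g$ — hence $\nabla w=\nabla g-Kx$ — to be Lipschitz on $\Gamma$, with constant depending only on $K$ and $\mathrm{diam}\,U$. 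I would then extend $\nabla w|_\Gamma$ to a globally Lipschitz map $F$ on $\mathbb R^m$ (Kirszbraun), noting that at points of $\Gamma$ where $w$ is twice differentiable $DF=D^2w$.

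For the conclusion I would run the measure estimate: because $F$ is Lipschitz it maps the null set $\Gamma\cap\mathcal N$ onto a null set, so by the area formula
\[
|B_{\varepsilon_0}(0)|\le|\nabla w(\Gamma)|=|F(\Gamma)|\le|F(\Gamma\cap\mathcal N)|+|F(\Gamma\setminus\mathcal N)|=|F(\Gamma\setminus\mathcal N)|\le\int_{\Gamma\setminus\mathcal N}|\det D^2w(x)|\,dx,
\]
and in particular $\Gamma\setminus\mathcal N\ne\emptyset$. Picking $z\in\Gamma\setminus\mathcal N$ and setting $p:=-\nabla w(z)$, one has $|p|<\varepsilon_0\le\varepsilon$, the map $x\mapsto w(x)+\langle p,x\rangle$ attains a maximum at $z\in U$ by the contact property of $\Gamma$, and $w$ has a second differential at $z$; undoing the preliminary reduction gives the statement. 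I expect the main obstacle to be this third step — proving that the contact set is $C^{1,1}$ so that the area formula applies, and, crucially, that the non-twice-differentiable exceptional set $\mathcal N$ contributes nothing to the image $\nabla w(\Gamma)$; without the Lipschitz property on $\Gamma$ this can genuinely fail and the measure count collapses.
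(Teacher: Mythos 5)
The paper does not prove this lemma at all: it is quoted verbatim from Ishii \cite{HI26}, where the argument (going back to Jensen) runs by mollifying the semiconvex function and estimating the measure of the set of maximum points of the shifted functions. Your route — Alexandrov's theorem plus a Lipschitz bound for $\nabla w$ on the contact set plus the area formula — is the alternative Caffarelli--Cabr\'e-style proof, and in outline it does work; but as written it has two concrete gaps. First, the preliminary reduction cannot be ``undone''. After restricting to a small ball $B_r(y)$ and replacing $w$ by $w-\eta|x-y|^2$, what your argument delivers is a point $z$ and a small $p$ such that $w(x)+\langle p,x\rangle-\eta|x-y|^2$ attains a maximum \emph{over $B_r(y)$} at $z$; the quadratic term cannot be absorbed into the linear one, and a maximum over the small ball is not a maximum over $U$, so this is not the statement of the lemma, which requires a purely linear perturbation maximized over all of $U$. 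Second, the step you yourself flag is indeed incomplete: the two-point inequality $\langle\nabla g(x)-\nabla g(x'),x-x'\rangle\le K|x-x'|^2$ controls only the component of $\nabla g(x)-\nabla g(x')$ along $x-x'$ and does not give Lipschitz continuity. To get it one must test the one-sided bound $g(y')\le g(x)+\langle\nabla g(x),y'-x\rangle+\tfrac K2|y'-x|^2$ at an auxiliary point $y'$ displaced from a contact point in the direction of $\nabla g(x)-\nabla g(x')$, and one must verify that this $y'$ (and the relevant segments for the convexity lower bound) lie in the domain — which is exactly what you invoked convexity of $U$ for, so the flaw in the reduction propagates into the key estimate.

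Both gaps are repairable without the reduction, and then your scheme closes on the original $U$. Since an interior maximum point $x_p$ of $w+\langle p,\cdot\rangle$ satisfies $w(x_p)+\langle p,x_p\rangle\ge w(y)+\langle p,y\rangle$, for $\varepsilon_0$ small all contact points lie in the compact set $\{x\in\overline U:\ w(x)\ge\max_{\partial U}w+\tfrac\delta2\}$, hence at distance $\ge\rho>0$ from $\partial U$. For pairs of contact points with $|x-x'|<\rho$ the test point $y'=x+\lambda\,(\nabla g(x')-\nabla g(x))/|\nabla g(x')-\nabla g(x)|$, $\lambda=|x-x'|$, stays in $U$ and the local convexity of $g$ on $B_\rho(x')$ suffices, giving $|\nabla g(x)-\nabla g(x')|\le CK|x-x'|$; for far pairs use that $\nabla w$ is bounded on $\Gamma$ by the Lipschitz constant of $w$, so Lipschitzness holds with a constant depending also on $\rho$. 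With that in place your extension/area-formula count is fine (and you only need that $F(\Gamma\cap\mathcal N)$ is null, not the determinant bound), and the chosen $z\in\Gamma\setminus\mathcal N$, $p=-\nabla w(z)$ give the lemma exactly as stated, with a linear perturbation maximized over $U$. You should either carry out this interior-distance argument or, if you prefer the convexity reduction, prove the lemma in the localized, quadratically perturbed form and then explain separately why that weaker form still implies the stated one — as written, that implication is the missing (and false, as stated) step.
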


 \begin{lemma}[
 \cite{HI26}]
 \label{Lemma3.2}
  For any $C>0$ the subset $K=\left\{X \in S^n\ | \ -C I \leq X \leq  C I\right\}$ of $S^n$ is compact.
 \end{lemma}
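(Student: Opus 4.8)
The plan is to invoke the Heine--Borel theorem. The space $S^n$ of real symmetric $n\times n$ matrices is a finite-dimensional real vector space (of dimension $n(n+1)/2$) carrying its natural topology, so a subset of it is compact if and only if it is closed and bounded. It therefore suffices to verify these two properties for $K=\{X\in S^n : -CI\le X\le CI\}$, and each is essentially immediate.

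For boundedness, recall that a symmetric matrix $X$ satisfies $-CI\le X\le CI$ exactly when every eigenvalue $e$ of $X$ lies in $[-C,C]$. Since $X$ is symmetric, its operator norm equals $\max_i|e_i|$, hence $\|X\|\le C$ for every $X\in K$; as all norms on the finite-dimensional space $S^n$ are equivalent, $K$ is bounded in any of them. (Alternatively one sees this entrywise: the inequality $-C|\xi|^2\le\langle X\xi,\xi\rangle\le C|\xi|^2$ applied with $\xi=e_i$ gives $|X_{ii}|\le C$, and applied with $\xi=e_i+e_j$ then gives $|X_{ij}|\le 2C$.)

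For closedness, rewrite the two defining conditions as $CI-X\ge 0$ and $X+CI\ge 0$, i.e.\ both matrices belong to the cone $S^n_+$ of positive semidefinite symmetric matrices. That cone is closed, since $S^n_+=\bigcap_{\xi\in\mathbb{R}^n}\{Y\in S^n:\langle Y\xi,\xi\rangle\ge 0\}$ is an intersection of closed half-spaces, each $Y\mapsto\langle Y\xi,\xi\rangle$ being linear hence continuous. Because the affine maps $X\mapsto CI-X$ and $X\mapsto X+CI$ are continuous, $K$ is the intersection of the two preimages of $S^n_+$ under these maps, hence an intersection of two closed sets, hence closed.

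Combining the two steps, $K$ is a closed and bounded subset of the finite-dimensional space $S^n$, so by Heine--Borel it is compact. There is no genuine obstacle in this argument; the only point deserving a moment's care is making the boundedness quantitative, which the spectral characterization of the operator norm of a symmetric matrix (or the entrywise estimate above) settles at once.
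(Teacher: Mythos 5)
Your argument is correct and complete: boundedness follows from the spectral characterization (or your entrywise estimate, which checks out since $\langle X(e_i+e_j),e_i+e_j\rangle$ gives $|X_{ij}|\le 2C$), closedness from writing $K$ as the intersection of two affine preimages of the closed positive semidefinite cone, and Heine--Borel applies in the finite-dimensional space $S^n$. The paper itself offers no proof, quoting the lemma from \cite{HI26}, so your write-up simply supplies the standard argument that the citation presupposes.
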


\begin{Proposition}\label{P2} Let $u$ be an $u.s.c.$ viscosity subsolution of
$$
G((t,x),u,\nabla_\mathbb{B}u,\nabla_\mathbb{B}^2u)=0 \ \ \ \ \ \ \ \ in\ \mathbb B,
$$
and
$v$ be a $l.s.c.$ viscosity supersolution of
$$
G((t,x),v,\nabla_\mathbb{B}v,\nabla_\mathbb{B}^2v)=0 \ \ \ \ \ \ \ \ in\ \mathbb B.
$$
Let $\epsilon>0$ and $\phi \in C^2(\mathbb{B})$. Set $$w((t,x),(s,y))=u^\epsilon(t,x)-v_\epsilon(s,y)$$ for $((t,x),(s,y))\in \mathbb B_\epsilon \times \mathbb B_\epsilon$. Suppose $w-\phi$ attains a maximum at some $((\overline t,\overline x),(\overline s,\overline y)\in \mathbb B_\epsilon\times \mathbb B_\epsilon$, then there exist matrices $X,Y \in S^n$ and a constant $C>0$, depending only on $\epsilon$ and $|\nabla\phi((\overline t,\overline x),(\overline s,\overline y))|$ such that
$$
-CI \le \begin{pmatrix}
X & O \\
O & Y
\end{pmatrix} \le \nabla_\mathbb{B}^2\phi((\overline t,\overline x),(\overline s,\overline y)),
$$
$$
G^\epsilon ((\overline t,\overline x),u^\epsilon(\overline t,\overline x),\nabla_{\mathbb{B}_{(t,x)}}\phi((\overline t,\overline x),(\overline s,\overline y)),X)\ge 0,
$$
and
$$
G_\epsilon ((\overline s,\overline y),v_\epsilon(\overline s,\overline y),-\nabla_{\mathbb{B}_{(s,y)}}\phi((\overline t,\overline x),(\overline s,\overline y)),-Y)\le 0.
$$
\end{Proposition}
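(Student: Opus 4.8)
The plan is to transplant H.\ Ishii's argument from \cite{HI26} to the conical setting. The idea is to replace $u$ and $v$ by the regularized envelopes $u^{\epsilon}$ and $v_{\epsilon}$ (here $v_{\epsilon}$ is the lower $\epsilon$-envelope, so that $-v_{\epsilon}=(-v)^{\epsilon}$), which by Lemma \ref{Proposition3.1} (applied to $u$ and to $-v$) are, respectively, a viscosity subsolution of $G^{\epsilon}=0$ and a viscosity supersolution of $G_{\epsilon}=0$ on $\mathbb{B}_{\epsilon}$, and which are moreover semiconvex / semiconcave with a modulus controlled only by $\epsilon$ and $|\nabla_{\mathbb{B}}\phi|$; the gain is that the two relaxed equations can then be tested at a single point produced by Jensen's lemma. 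Throughout I would work in the coordinate $a=\ln t$ (in which, as recorded in the Suitability remark, $\nabla_{\mathbb{B}}$ and $\nabla_{\mathbb{B}}^{2}$ become the ordinary gradient and Hessian and the envelopes become the usual sup-/inf-convolutions), so that Jensen's lemma --- a Euclidean statement --- applies directly, and then transcribe the output back into the $\nabla_{\mathbb{B}}$, $\nabla_{\mathbb{B}}^{2}$ notation of the statement.

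First I would pin down the semiconvexity. Fixing $(s,y)=(\overline{s},\overline{y})$ in the maximum condition for $w-\phi$ shows that $u^{\epsilon}-\widetilde{\phi}$ attains its maximum at $(\overline{t},\overline{x})$, where $\widetilde{\phi}(t,x)=\phi((t,x),(\overline{s},\overline{y}))$; similarly, since $-v_{\epsilon}=(-v)^{\epsilon}$, the function $(-v_{\epsilon})-\widehat{\phi}$ attains its maximum at $(\overline{s},\overline{y})$, where $\widehat{\phi}(s,y)=\phi((\overline{t},\overline{x}),(s,y))$. Proposition \ref{Proposition3.2} then provides a neighborhood $U$ of $((\overline{t},\overline{x}),(\overline{s},\overline{y}))$ and a constant $C>0$ depending only on $\epsilon$, $|\nabla_{\mathbb{B}}\widetilde{\phi}(\overline{t},\overline{x})|$ and $|\nabla_{\mathbb{B}}\widehat{\phi}(\overline{s},\overline{y})|$ --- hence only on $\epsilon$ and $|\nabla_{\mathbb{B}}\phi((\overline{t},\overline{x}),(\overline{s},\overline{y}))|$ --- such that $\nabla_{\mathbb{B}}^{2}u^{\epsilon}\ge -CI$ and $\nabla_{\mathbb{B}}^{2}v_{\epsilon}\le CI$ on $U$. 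Consequently $W((t,x),(s,y)):=u^{\epsilon}(t,x)-v_{\epsilon}(s,y)-\phi((t,x),(s,y))$ is semiconvex and locally Lipschitz on $U$ with an interior maximum at $((\overline{t},\overline{x}),(\overline{s},\overline{y}))$; after adding to $\phi$ a smooth nonnegative function vanishing to second order at this point and positive elsewhere nearby (which affects neither $\nabla_{\mathbb{B}}\phi$ nor $\nabla_{\mathbb{B}}^{2}\phi$ there, nor the constant $C$), I may assume this maximum is strict on $\overline{U}$.

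Next I would invoke Jensen's lemma (Lemma \ref{Lemma3.1}) with $\varepsilon=1/k$: there are $p_{k}=(p_{k}^{(1)},p_{k}^{(2)})$ with $|p_{k}|\le 1/k$ and $z_{k}=((t_{k},x_{k}),(s_{k},y_{k}))\in U$ at which $W+\langle p_{k},\cdot\rangle$ attains a maximum and possesses a second differential; strictness of the maximum of $W$ forces $z_{k}\to((\overline{t},\overline{x}),(\overline{s},\overline{y}))$. Because $\phi\in C^{2}$, twice differentiability of $W+\langle p_{k},\cdot\rangle$ at $z_{k}$ forces (restricting to the two coordinate slices through $z_{k}$) $u^{\epsilon}$ to be twice differentiable at $(t_{k},x_{k})$ and $v_{\epsilon}$ at $(s_{k},y_{k})$. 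Put $X_{k}=\nabla_{\mathbb{B}}^{2}u^{\epsilon}(t_{k},x_{k})$ and $Y_{k}=-\nabla_{\mathbb{B}}^{2}v_{\epsilon}(s_{k},y_{k})$. Vanishing of the first differential gives $\nabla_{\mathbb{B}}u^{\epsilon}(t_{k},x_{k})=\nabla_{\mathbb{B}_{(t,x)}}\phi(z_{k})-p_{k}^{(1)}$ and $\nabla_{\mathbb{B}}v_{\epsilon}(s_{k},y_{k})=-\nabla_{\mathbb{B}_{(s,y)}}\phi(z_{k})+p_{k}^{(2)}$, and negative semidefiniteness of the second differential gives $\mathrm{diag}(X_{k},Y_{k})\le \nabla_{\mathbb{B}}^{2}\phi(z_{k})$; combined with $X_{k}\ge -CI$ and $Y_{k}\ge -CI$ from the semiconvexity bounds, this yields $-CI\le \mathrm{diag}(X_{k},Y_{k})\le \nabla_{\mathbb{B}}^{2}\phi(z_{k})$. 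By Lemma \ref{Lemma3.2} the sequence $(X_{k},Y_{k})$ lies in a compact set, so along a subsequence $(X_{k},Y_{k})\to(X,Y)$ with $-CI\le \mathrm{diag}(X,Y)\le \nabla_{\mathbb{B}}^{2}\phi((\overline{t},\overline{x}),(\overline{s},\overline{y}))$.

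Finally, for a function twice differentiable at a point one may, for each $\eta>0$, build a polynomial $C^{2}$ test function touching it from the appropriate side there with Hessian shifted by $\pm 2\eta I$; feeding these into the subsolution property of $u^{\epsilon}$ and the supersolution property of $v_{\epsilon}$ gives $G^{\epsilon}((t_{k},x_{k}),u^{\epsilon}(t_{k},x_{k}),\nabla_{\mathbb{B}}u^{\epsilon}(t_{k},x_{k}),X_{k}+2\eta I)\ge 0$ and $G_{\epsilon}((s_{k},y_{k}),v_{\epsilon}(s_{k},y_{k}),\nabla_{\mathbb{B}}v_{\epsilon}(s_{k},y_{k}),-Y_{k}-2\eta I)\le 0$. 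Letting $\eta\to 0$ and then $k\to\infty$, using continuity of $G^{\epsilon}$, $G_{\epsilon}$ and of $\nabla_{\mathbb{B}}\phi$, the facts $p_{k}\to 0$, $z_{k}\to((\overline{t},\overline{x}),(\overline{s},\overline{y}))$, $(X_{k},Y_{k})\to(X,Y)$, and the continuity of $u^{\epsilon}$, $v_{\epsilon}$ at the limit point, yields the two required inequalities. I expect the only genuinely delicate points to be the bookkeeping of the $a=\ln t$ change of variables, so that the output of Jensen's lemma is correctly reinterpreted in terms of $\nabla_{\mathbb{B}}$ and $\nabla_{\mathbb{B}}^{2}$, and the verification that the semiconvexity constant furnished by Proposition \ref{Proposition3.2} for a single envelope transfers to the doubled function $W$ with $C$ depending only on $\epsilon$ and $|\nabla_{\mathbb{B}}\phi((\overline{t},\overline{x}),(\overline{s},\overline{y}))|$.
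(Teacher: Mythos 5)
Your proposal follows the same route as the paper: it reduces to a strict maximum, invokes Proposition \ref{Proposition3.2} for the semiconvexity/semiconcavity bounds (with constant depending only on $\epsilon$ and $|\nabla\phi|$), applies Jensen's lemma (Lemma \ref{Lemma3.1}) to produce points of twice-differentiability, uses Lemma \ref{Proposition3.1} to read off the viscosity inequalities for $G^\epsilon$ and $G_\epsilon$, and extracts the limit $(X,Y)$ via the compactness Lemma \ref{Lemma3.2}. The only cosmetic difference is that you perform Jensen's lemma explicitly in the $a=\ln t$ coordinate, whereas the paper works directly in $(t,x)$ and therefore carries along an extra diagonal correction term $(p'_k t_k,\ldots,q'_k s_k,\ldots)$ coming from applying $\nabla_\mathbb{B}$ and $\nabla_\mathbb{B}^2$ to the linear perturbation; both versions are correct and equivalent.
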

\begin{proof}We may assume that $w-\phi$ attains a strict maximum over $\mathbb{B}_{\epsilon} \times \mathbb{B}_{\epsilon}$ at $\left((\bar{t},\bar{x}), (\bar{s},\bar{y})\right)$.
It's indeed, we can replace $\phi$ by $$
\phi((t,x), (s,y))+|(e^t,x)-(e^{\bar t},\bar{x})|^4+|(e^s,y)-(e^{\bar s},\bar{y})|^4.$$
 By Proposition \ref{Proposition3.2} we see that there is an open neighborhood $U$ of $\left((\bar{t},\bar{x}), (\bar{s},\bar{y})\right)$ and a constant $$C_0=\epsilon^2\left\{\epsilon^2-(\delta+2 \gamma)^2\right\}^{-3 / 2}$$ for which the function $$((t,x), (s,y)) \rightarrow w((t,x), (s,y)) +\frac{1}{2} C_0\left(|(e^t,x)|^2+|(e^s,y)|^2\right)$$ satisfying $
\nabla^2w \geq -C_0I$  on $U$.

 Lemma \ref{Lemma3.1} guarantees the existence of sequences $$\left\{\left((t_k,x_k), (s_k,y_k)\right)\right\} \subset U$$ and $$\left\{(p'_k,p_k)\right\},\left\{(q'_k,q_k)\right\} \subset \mathbb{R}^n$$ that satisfy the following properties (i)-(iii). \\
(i) as $k \rightarrow \infty$, $$\left((t_k,x_k), (s_k,y_k)\right) \rightarrow\left((\bar{t},\bar{x}), (\bar{s},\bar{y})\right),$$ $$(p'_k,p_k), (q'_k,q_k)\rightarrow 0.$$ \\
(ii) $w$ has the second differential at $\left((t_k,x_k), (s_k,y_k)\right) $ for $k \in \mathbf{N}$.\\
(iii) The function
\begin{equation*}
\begin{aligned}
 w\left((t,x),( s,y)\right) -\phi\left((t,x),( s,y)\right)
-\left\langle (p'_k,p_k),(t, x)\right\rangle+\left\langle (q'_k,q_k), (s,y)\right\rangle
\end{aligned}
\end{equation*}
    attains its maximum over $U$ at $\left((t_k,x_k), (s_k,y_k)\right) $ for $k \in \mathbf{N}$.

Then we get
$$\nabla_{\mathbb{B}}(w-\phi)\left((t_k,x_k), (s_k,y_k)\right) =\left(p'_k t_k,p_k,-q'_k s_k,-q_k\right),$$
$$\nabla_{\mathbb{B}}^2(w-\phi)\left((t_k,x_k),(s_k, y_k)\right) + \begin{pmatrix}
-p'_kt_k &\cdots &0&\cdots &0 \\
 \vdots&\ddots &\vdots&\ddots\\
0&\cdots&q'_ks_k &\cdots&0\\
\vdots&\ddots&\vdots&\ddots\\
0&\cdots&0&\cdots&0
\end{pmatrix}
\leq O.
$$

By Lemma \ref{Proposition3.1}, we can get
$$G^\epsilon\left((t_k,x_k), u^\epsilon ,\nabla_{\mathbb{B}}u^\epsilon, \nabla_{\mathbb{B}}^2u^\epsilon\right) \geq 0,$$
and
$$
G_\epsilon\left((s_k,y_k), v_{\epsilon}, \nabla_{\mathbb{B}} v_{\epsilon}, \nabla_{\mathbb{B}}^2 v_{\epsilon}\right) \leq0,
$$
for $k \in \mathbf{N}$. We set
$$
X_k=\nabla_{\mathbb{B}}^2 u^\epsilon\left(t_k,x_k\right) \text { and } Y_k=-\nabla_{\mathbb{B}}^2 v_{\epsilon}\left(s_k,y_k\right) \text { for } k \in \mathbf{N},
$$
by Proposition \ref{Proposition3.2}, we know there exists the constant $C_1 $ such that
$$
\begin{gathered}
-C_1I \leq\left(\begin{array}{cc}
X_k & O \\
O & Y_k
\end{array}\right) \leq \nabla_{\mathbb{B}}^2 \phi\left((t_k,x_k),(s_k, y_k)\right)-
\begin{pmatrix}
-p'_kt_k &\cdots &0&\cdots &0 \\
 \vdots&\ddots &\vdots&\ddots\\
0&\cdots&q'_ks_k &\cdots&0\\
\vdots&\ddots&\vdots&\ddots\\
0&\cdots&0&\cdots&0
\end{pmatrix},
\end{gathered}
$$

$$G^{\epsilon} \left((t_k,x_k), u^{\epsilon}, \nabla_{\mathbb{B}_{(t,x)}} \phi\left((t_k,x_k), (s_k,y_k)\right)+(p'_kt_k,p_k), X_k\right) \geq 0,$$
and
$$G_{\epsilon} \left((s_k,y_k), v_{\epsilon}, -\nabla_{\mathbb{B}_{(s,y)}} \phi\left((t_k,x_k), (s_k,y_k)\right)+(q'_ks_k,q_k), -Y_k\right) \leq 0,$$
for $k \in \mathbf{N}$.
Since $-C I \leq X_k, Y_k \leq CI$ for some $C>0$ and all $k \in \mathbf{N}$, we see from Lemma \ref{Lemma3.2} that there is an increasing sequence $\left\{k_j\right\} \subset \mathbf{N}$ and matrices $X$, $Y \in S^n$ such that $X_{k_j} \rightarrow X, Y_{k_j} \rightarrow Y$ in $S^n$ as $j \rightarrow \infty$. Moreover, 
the  $w$ is continue on $U$. Thus, we send $k=k_j \rightarrow \infty$ to obtain that
$$
\begin{gathered}
-CI \leq\left(\begin{array}{ll}
X & O \\
O & Y
\end{array}\right) \leq \nabla_{\mathbb{B}}^2 \phi((\bar{t},\bar{x}), (\bar{s},\bar{y})),
\end{gathered}
$$
$$G^{\epsilon} \left((\bar{t},\bar{x}), u^{\epsilon}, \nabla_{\mathbb{B}_{(t,x)}} \phi\left((\bar{t},\bar{x}), (\bar{s},\bar{y})\right), X\right) \geq 0,$$
and
$$G_{\epsilon} \left((\bar{s},\bar{y}), v_{\epsilon}, -\nabla_{\mathbb{B}_{(s,y)}} \phi\left((\bar{t},\bar{x}), (\bar{s},\bar{y})\right), -Y\right) \leq 0.$$
\end{proof}

\begin{proof}[\textbf{Proof of Theorem \ref{A}}]
Since $u\in USC(\overline{\mathbb{B}})$, and
$ \mathop{\lim}\limits_{t \to 0}\sup u$ exists,
so we set $u \le M_u$. Since $v\in LSC(\overline{\mathbb{B}})$, $ \mathop{\lim}\limits_{t \to 0}\inf v$ exists, and $v$ is bounded from above, so we assume
 $m_v\le v\le M_v$.

 Next we will proceed with this proof in two steps.\\
\noindent \textbf{Step 1.}

  We claim that if $v$ is  a viscosity supersolution (resp. subsolution) of equation \eqref{H3}, the fact that $z$ is the viscosity supersolution (resp. subsolution) of equation \eqref{H9}
\begin{equation}\label{H9}
\begin{aligned}
    &|\nabla_\mathbb{B}z|^{p-2}tr\big[\big(I+(p-2)\frac{\nabla_\mathbb{B}z(\nabla_\mathbb{B}z)^T}{|\nabla_\mathbb{B}z|^2}\big)\nabla_\mathbb{B}^2z\big]+\frac{\psi''(z)}{\psi'(z)}(p-1)|\nabla_\mathbb{B}z|^p\\
    &+(n-p)|\nabla_\mathbb{B}z|^{p-2}(t\partial_t z)=\frac{f(t,x)t^{p}}{\psi'(z)^{p-1}},     \ \ \ \ \ \ \ \ \ \    (t,x)\in \mathbb B,
\end{aligned}
\end{equation}
where $v(t,x)=\psi(z(t,x))$, $\psi\in C^2(\mathbb{R})$ is monotonically increasing and $\psi'\not=0$.
We take the viscosity supersolution as an example for the proof.

For all $\xi\in C^2(\mathbb{B})$, $z-\xi$  reaches a local minimum value at $(t_0,x_0)$. Let $ c_0=(z-\xi)(t_0,x_0)$, and $\tilde{\xi}=\xi +c_0$, then $\psi(z)-\psi (\tilde{\xi}) $ reaches a local minimum  at $(t_0,x_0)$, and $\psi(z(t_0,x_0))=\psi(\tilde{\xi}(t_0,x_0))$.
Since $u$ is the viscosity supersolution of \eqref{H3}, then
\begin{equation*}
\begin{aligned}
    &|\nabla_\mathbb{B}\psi(\tilde{\xi})|^{p-2}tr\big[\big(I+(p-2)\frac{\nabla_\mathbb{B}\psi(\tilde{\xi})(\nabla_\mathbb{B}\psi(\tilde{\xi}))^T}{|\nabla_\mathbb{B}\psi(\tilde{\xi})|^2}\big)\nabla_\mathbb{B}^2\psi(\tilde{\xi})\big]\\
    &+(n-p)|\nabla_\mathbb{B}\psi(\tilde{\xi})|^{p-2}(t\partial_t \psi(\tilde{\xi}))\le f(t_0,x_0)t_0^{p},   \ \ \ \ \ \ \ \ \ \   (t_0,x_0)\in \mathbb B.
\end{aligned}
\end{equation*}
And since
$$\nabla_\mathbb{B}\psi(\tilde{\xi }(t,x))=\psi'(\tilde{\xi})\ \nabla_\mathbb{B}\tilde{\xi}=\psi'(\tilde{\xi})\ \nabla_\mathbb{B}\xi,$$
\begin{equation*}
\begin{aligned}
\nabla_\mathbb{B}^2\psi(\tilde{\xi}(t,x))&=\psi'(\tilde{\xi})\ \nabla_\mathbb{B}^2\tilde{\xi}+\psi''(\tilde{\xi})\ \nabla_\mathbb{B}\tilde{\xi} (\nabla_\mathbb{B}\tilde{\xi})^T\\
&=\psi'(\tilde{\xi})\ \nabla_\mathbb{B}^2\xi+\psi''(\tilde{\xi})\ \nabla_\mathbb{B}\xi (\nabla_\mathbb{B}\xi)^T,
\end{aligned}
\end{equation*}
substitute them into the above equation, we have
\begin{equation*}
\begin{aligned}
     &|\psi'(\tilde{\xi})\nabla_\mathbb{B}\xi|^{p-2}tr\big[\big(I+(p-2)\frac{\nabla_\mathbb{B}\xi(\nabla_\mathbb{B}\xi)^T}{|\nabla_\mathbb{B}\xi|^2}\big)\big(\psi'(\tilde{\xi})\nabla_\mathbb{B}^2\xi+\psi''(\tilde{\xi})\nabla_\mathbb{B}\xi(\nabla_\mathbb{B}\xi)^T\big)\big]\\
     &+(n-p)\psi'(\tilde{\xi})(t\partial_t \xi)|\psi'(\tilde{\xi})\nabla_\mathbb{B}\xi|^{p-2}\le f(t_0,x_0)t_0^{p},     \ \ \ \ \ \ \ \ \ \   (t_0,x_0)\in \mathbb B.
\end{aligned}
\end{equation*}
Finally, we get,
\begin{equation*}
\begin{aligned}
     &|\nabla_\mathbb{B}\xi|^{p-2}tr\big[\big(I+(p-2)\frac{\nabla_\mathbb{B}\xi(\nabla_\mathbb{B}\xi)^T}{|\nabla_\mathbb{B}\xi|^2}\big)\nabla_\mathbb{B}^2\xi\big]+(p-1)\frac{\psi''(\tilde{\xi})}{\psi'(\tilde{\xi})}|\nabla_\mathbb{B}\xi|^{p}\\
     &+(n-p)(t\partial_t \xi)|\nabla_\mathbb{B}\xi|^{p-2}\le \frac{f(t_0,x_0)t_0^{p}}{\psi'(\tilde{\xi})^{p-1}},     \ \ \ \ \ \ \ \ \ \   (t_0,x_0)\in \mathbb B.
\end{aligned}
\end{equation*}

According to the Definition $\ref{D1}$ , we prove the above claim that $z$ is the viscosity supersolution (resp. subsolution) of equation \eqref{H9} .

Let $$ M=\max\{|M_u|, |M_v|,|m_v|\},$$
if $M=0$, we can get $$u\le0,v=0,$$ then Theorem \ref{A} holds.

If $M \neq 0$, let
$$\psi(s)=K\int_{0}^{s} e^{-r}\, dr,$$ where $K=2M<+\infty$. We know that $$u,|v| \le M.$$
Without loss of generality, let
$$u=\psi(z_1),v=\psi(z_2),$$
 then we have
  $$z_1\in (-\infty,ln2],z_2\in[ln\frac{2}{3},ln2],$$

 $$\psi'(s)=Ke^{-s},\psi''(s)=-Ke^{-s}.$$
So $\eqref{H9}$ can be simplified to the following expression
\begin{equation}\label{H10}
\begin{aligned}
     &|\nabla_\mathbb{B}z|^{p-2}tr\big[\big(I+(p-2)\frac{\nabla_\mathbb{B}z(\nabla_\mathbb{B}z)^T}{|\nabla_\mathbb{B}z|^2}\big)\nabla_\mathbb{B}^2z\big]-(p-1)|\nabla_\mathbb{B}z|^p\\
     &+(n-p)|\nabla_\mathbb{B}z|^{p-2}(t\partial_t z)-\frac{f(t,x)t^{p}}{K^{p-1}}e^{z(p-1)}=0.
\end{aligned}
\end{equation}

\noindent \textbf{Step 2.}

We claim that
    if $t^pf(t,x)\geq \omega>0$, for any $(t,x)\in \mathbb B$, then the viscosity supersolution (resp.subsolution) of the equation \eqref{H10} satisfies the comparison principle, that is
\begin{equation}\label{H11}
    \sup\limits_{\mathbb B} z_1-z_2\le \sup\limits_{\partial \mathbb B}(z_1-z_2)^+=0,
\end{equation}
where $z_1$ is the viscosity subsolution of \eqref{H10}, $z_2$ is the viscosity supersolution of \eqref{H10}. Here we proof it by a contradiction.

If$$\eqref{H11}\text{\ is\  not\  satisfied,\  then}
\sup\limits_{\mathbb B} z_1-z_2> \sup\limits_{\partial \mathbb B}(z_1-z_2)^+=0 .$$
We note that
\begin{equation}\label{eq:33}
M_{\alpha}=\sup\limits_{{\mathbb  B}\times {\mathbb  B}}z_1(t,x)-z_2(s,y)-\frac{\alpha}{2}\big((\ln\frac{t}{s})^2+|x-y|^2\big),\end{equation} which $\alpha >0$. Since  $z_1\in (-\infty,ln2]$ and  $z_2\in[ln\frac{2}{3},ln2] $, so $M_{\alpha}$ exists, and $$M_{\alpha}\ge \sup\limits_{\mathbb B} z_1-z_2=\delta>0.$$
We can easily notice that when $(t,x)\ \mbox{or}\ (s,y)\to \partial{\mathbb{B}}$,
$$ \mathop{\lim}\limits_{\alpha \to \infty }z_1(t,x)-z_2(s,y)-\frac{\alpha}{2}\left((\ln\frac{t}{s})^2+(|x-y|)^2\right)\leq 0,$$
and when $t\ \mbox{or}\ s\to 0$,
$$ \mathop{\lim}\limits_{\alpha \to \infty }z_1(t,x)-z_2(s,y)-\frac{\alpha}{2}\left((\ln\frac{t}{s})^2+(|x-y|)^2\right)\leq 0.$$
So when $\alpha$ is  large enough, we can find  $((t_{\alpha},x_{\alpha}), (s_{\alpha},y_{\alpha}))\in \mathbb{B}\times\mathbb{B}$ such that $$M_{\alpha}=z_1(t_{\alpha},x_{\alpha})-z_2(s_{\alpha},y_{\alpha})-\frac{\alpha}{2}\big((\ln \frac {t_{\alpha}}{s_{\alpha}})^2+|x_{\alpha}-y_{\alpha}|^2\big),$$
and when $\alpha\to \infty,$ there exists a subsequence of $((t_{\alpha},x_{\alpha}), (s_{\alpha},y_{\alpha}))$, which we still denote itself, converging to some point  $((t_{\infty},x_{\infty}),\left(t_{\infty},x_{\infty})\right)\in \mathbb{B}\times\mathbb{B}.$
Similarly, we note that $$ M_{\alpha,\epsilon}=\sup\limits_{{\mathbb B}_{\epsilon}\times {\mathbb  B}_{\epsilon}}z^{\epsilon}_1(t,x)-z_{2{\epsilon}}(s,y)-\frac{\alpha}{2}\big((\ln\frac{t}{s})^2+|x-y|^2\big),$$
and choosing  $((t_{\alpha,\epsilon},x_{\alpha,\epsilon}), (s_{\alpha,\epsilon},y_{\alpha,\epsilon}))$ such that
$$M_{\alpha,\epsilon}= z_1^{\epsilon}(t_{\alpha,\epsilon},x_{\alpha,\epsilon})-z_{2\epsilon}(s_{\alpha,\epsilon},y_{\alpha,\epsilon})-\frac{\alpha}{2}\big((\ln\frac{t_{\alpha,\epsilon}}{s_{\alpha,\epsilon}})^2+|x_{\alpha,\epsilon}-y_{\alpha,\epsilon}|^2\big).$$
When   $\epsilon\to 0,$  there exists a subsequence of $((t_{\alpha,\epsilon},x_{\alpha,\epsilon}), (s_{\alpha,\epsilon},y_{\alpha,\epsilon}))$, which we still denote itself, converging to some point  $((t_{\alpha,0},x_{\alpha,0}),\left(t_{\alpha,0},x_{\alpha,0})\right).$
Since $$
\begin{aligned}M_{\alpha}=&\lim \limits_{\epsilon \to 0}M_{\alpha,\epsilon}\\ \leq & z_1(t_{\alpha,0},x_{\alpha,0})-z_2(s_{\alpha,0},y_{\alpha,0})-\frac{\alpha}{2}\big((\ln \frac {t_{\alpha,0}}{s_{\alpha,0}})^2+|x_{\alpha,0}-y_{\alpha,0}|^2\big),
\end{aligned}$$
and  when $\alpha$ is  large enough,
$$
\begin{aligned} z_1(t_{\alpha,0},x_{\alpha,0})-z_2(s_{\alpha,0},y_{\alpha,0})-\frac{\alpha}{2}\big((\ln \frac {t_{\alpha,0}}{s_{\alpha,0}})^2+|x_{\alpha,0}-y_{\alpha,0}|^2\big)\leq M_{\alpha}
\end{aligned}.$$
So when $\alpha$ is sufficiently large, we have
 $$ M_{\alpha} = z_1(t_{\alpha,0},x_{\alpha,0})-z_2(s_{\alpha,0},y_{\alpha,0})-\frac{\alpha}{2}\big((\ln \frac {t_{\alpha,0}}{s_{\alpha,0}})^2+|x_{\alpha,0}-y_{\alpha,0}|^2\big).$$
 We can assume
  $$((t_{\alpha,0},x_{\alpha,0}),(t_{\alpha,0},x_{\alpha,0}))= \left((t_{\alpha},x_{\alpha}),(t_{\alpha},x_{\alpha})\right) . $$
So when $\alpha$ is sufficiently large, and $\epsilon$ is sufficiently small, we have $$\big((t_{\alpha,\epsilon},x_{\alpha,\epsilon}),(s_{\alpha,\epsilon},y_{\alpha,\epsilon})\big)\in \mathbb B_\epsilon \times \mathbb B_\epsilon,$$ 
and
$$
\lim \limits_{\epsilon \to 0}\big((t_{\alpha,\epsilon},x_{\alpha,\epsilon}),(s_{\alpha,\epsilon},y_{\alpha,\epsilon})\big)
=\left((t_\alpha,x_{\alpha }),(s_\alpha,y_{\alpha})\right).
$$
For convenience, we simplify \eqref{H10} by noting that
\begin{equation*}
 \begin{aligned}
G&=|\nabla_\mathbb{B}z|^{p-2}tr\big[\big(I+(p-2)\frac{\nabla_\mathbb{B}z(\nabla_\mathbb{B}z)^T}{|\nabla_\mathbb{B}z|^2}\big)\nabla_\mathbb{B}^2z\big]-(p-1)|\nabla_\mathbb{B}z|^p\\
     &+(n-p)|\nabla_\mathbb{B}z|^{p-2}(t\partial_t z)-\frac{f(t,x)t^{p}}{K^{p-1}}e^{z(p-1)},
    \end{aligned}
    \end{equation*}
    and
\begin{equation*}
    \begin{aligned}
J&=G((t_{\alpha},x_{\alpha}),z_1(t_{\alpha},x_{\alpha}),q,X)-G((t_{\alpha},x_{\alpha}),z_2(s_{\alpha},y_{\alpha}),q,X)\\
&=\frac{t_{\alpha}^pf(t_{\alpha},x_{\alpha})}{K^{p-1}}\big(e^{(p-1)z_2(s_{\alpha},y_{\alpha})}-e^{(p-1)z_1(t_{\alpha},x_{\alpha})}\big)\\
&=\frac{t_{\alpha}^pf(t_{\alpha},x_{\alpha})}{K^{p-1}}(p-1)e^{(p-1)\widetilde z}\big(z_2(s_{\alpha},y_{\alpha})-z_1(t_{\alpha},x_{\alpha})\big),\\
\end{aligned}
\end{equation*}
with $\tilde{z}\in [z_2,z_1]$. Since $t^p_{\alpha}f(t_{\alpha},x_{\alpha})\ge \omega>0$, and   $$z_1\in (-\infty,ln2],z_2\in[ln\frac{2}{3},ln2],$$ so, when $\alpha\to \infty$, we can get $$J<-(p-1)e^{(p-1)\ln{\frac{2}{3}}}\delta\omega.$$
On the other hand,
\begin{equation*}
    \begin{aligned}
J&=G((t_{\alpha},x_{\alpha}),z_1(t_{\alpha},x_{\alpha}),q,X)-G((t_{\alpha},x_{\alpha}),z_2(s_{\alpha},y_{\alpha}),q,X)\\
&=G((t_{\alpha},x_{\alpha}),z_1(t_{\alpha},x_{\alpha}),q,X)-G^{\epsilon}((t_{\alpha,\epsilon},x_{\alpha,\epsilon}),z_1^{\epsilon}(t_{\alpha,\epsilon},x_{\alpha,\epsilon}),q,X)\\
&+G^{\epsilon}((t_{\alpha,\epsilon},x_{\alpha,\epsilon}),z_1^{\epsilon}(t_{\alpha,\epsilon},x_{\alpha,\epsilon}),q,X)-G_{\epsilon}((s_{\alpha,\epsilon},y_{\alpha,\epsilon}),z_{2\epsilon}(s_{\alpha,\epsilon},y_{\alpha,\epsilon}),q,-Y)\\
&+G_{\epsilon}((s_{\alpha,\epsilon},y_{\alpha,\epsilon}),z_{2\epsilon}(s_{\alpha,\epsilon},y_{\alpha,\epsilon}),q,-Y)-G_{\epsilon}((s_{\alpha,\epsilon},y_{\alpha,\epsilon}),z_{2\epsilon}(s_{\alpha,\epsilon},y_{\alpha,\epsilon}),q,X)\\
&+G_{\epsilon}((s_{\alpha,\epsilon},y_{\alpha,\epsilon}),z_{2\epsilon}(s_{\alpha,\epsilon},y_{\alpha,\epsilon}),q,X)-G((s_{\alpha},y_{\alpha}),z_{2}(s_{\alpha},y_{\alpha}),q,X)\\
&+G((s_{\alpha},y_{\alpha}),z_{2}(s_{\alpha},y_{\alpha}),q,X)-G((t_{\alpha},x_{\alpha}),z_2(s_{\alpha},y_{\alpha}),q,X)\\
&=J_1+J_2+J_3+J_4+J_5.
\end{aligned}
\end{equation*}
Choosing $X,\ Y$ like Proposition $\ref{P2}$, and $q=\alpha(\ln\frac{t_{\alpha,\epsilon}}{s_{\alpha,\epsilon}},x_{\alpha,\epsilon}-y_{\alpha,\epsilon})$, so we can illustrate that
$$ \begin{pmatrix}
        X&0\\
        0&Y
    \end{pmatrix}\leq \frac{\alpha}{2}
    \begin{pmatrix}
        2I&-2I\\
        -2I&2I\\
    \end{pmatrix}.
$$

Then $$X+Y\leq 0,\ J_3\geq 0,\ J_2\geq 0. $$
Let $\epsilon \to 0$, then $J_1,\ J_4\to 0$,
let $\alpha \to\infty$, we can find
$$J_5=\big(\frac{t_{\alpha}^pf(t_{\alpha},x_{\alpha})}{K^{p-1}}-\frac{s_{\alpha}^pf(s_{\alpha},y_{\alpha})}{K^{p-1}}\big)e^{(p-1)z_2(s_{\alpha},y_{\alpha})}\to 0,$$
so $J \geq 0$, we get a contradiction. 
So we get \eqref{H11}. Since $\psi$ monotonically increasing, then \begin{equation}
    \sup\limits_{\mathbb B} \psi(z_1)-\psi(z_2)\le 0,
\end{equation}
that is,
$u\leq v $ in $\mathbb{B}$.
\end{proof}
\begin{corollary}\label{corollary 1}
Let $u\in USC(\overline{\mathbb B})$ be  a viscosity subsolution of equation \eqref{eq:11} and $\mathop{\lim}\limits_{t \to 0}\sup u$ exists; let $v\in LSC(\overline{\mathbb B})$ be a viscosity supersolution of equation \eqref{eq:11} and $\mathop{\lim}\limits_{t \to 0}\inf v$ exists. In addition, $\mathop{\lim}\limits_{t \to 0}\sup u\leq \mathop{\lim}\limits_{t \to 0}\inf v$,
    and $u(t,x)$ is bounded from below. If $f\in C(\overline{\mathbb B})$ and there exists constant $\omega>0$ such that $t^pf(t,x)\leq -\omega$ for all $(t,x)\in \mathbb B$. Then we can deduce $u\le v$ in $\mathbb B$ when $u\le v$ on $\partial \mathbb B$.
\end{corollary}
\begin{proof}
   May as well assume $m_u\le u \le M_u$, $v\ge m_v$,
we can select $$\psi=K\int_{0}^{s} e^r\, dr,\ K=2\max\{|m_u|, |M_u|, |M_v|\}.$$
\end{proof}
\begin{corollary}\label{corollary 2}
Let $u\in USC(\overline{\mathbb B})$ be  a viscosity subsolution of equation \eqref{eq:11} and $\mathop{\lim}\limits_{t \to 0}\sup u$ exists; let $v\in LSC(\overline{\mathbb B})$ be a viscosity supersolution of equation \eqref{eq:11} and $\mathop{\lim}\limits_{t \to 0}\inf v$ exists. In addition, $\mathop{\lim}\limits_{t \to 0}\sup u\leq \mathop{\lim}\limits_{t \to 0}\inf v$, $u(t,x)$ is bounded from below and $v(t,x)$ is bounded from above. If $f\in C(\overline{\mathbb B})$ and there exists constant $\omega>0$ such that $|t^pf(t,x)|\geq \omega $ for all $(t,x)\in \mathbb B$. Then we can deduce $u\le v$ in $\mathbb B$ when $u\le v$ on $\partial \mathbb B$.
\end{corollary}
\section{\textbf{The existence of weak solutions}}
In this section, based on the work in  \cite{PPJ},  \cite{VP} and \cite{MO}, we explore the relationship between weak solutions and viscosity solutions of \eqref{eq:11}, and give the existence of weak solutions. Different from \cite{MO}, we  establish the weighted Sobolev spaces and study under these spaces.

\begin{definition}[Infimal convolution]
We define the infimal convolution of a function u as
$$ u_{\epsilon}(t,x):=\mathop{\inf }\limits_{(s,y)\in\mathbb{B}}\left(u(s,y)+\frac{|(e^t,x)-(e^s,y)|^2}{2\epsilon}\right)$$

\end{definition}

\begin{lemma}\label{Lemma2.1}
Assume $u:\mathbb{B }\to \mathbb{R}$ and $u_{\epsilon}$ is the infimal convolution of $u$.

$(1)$ $u_{\epsilon}(t,x)$ is an increasing sequence of semiconvex functions   in $\mathbb{B}$ about $(e^t,x)$ ;

$(2)$ 
 $u_{\epsilon} $ is locally Lipschitz continuous  in the interior of $\mathbb{B}$ about $(e^t,x)$ ;

$(3)$ if $u$ is lower semicontinuous in $\mathbb{B}$, $u_{\epsilon}$ converges pointwise to u;

$(4)$ if $u$ is  bounded, $u_{\epsilon}$ can be written as
$$ u_{\epsilon}(t,x):=\mathop{\inf }\limits_{(s,y)\in { B_{r(\epsilon)}(t,x)\bigcap\mathbb{B}}}\left(u(s,y)+\frac{|(e^t,x)-(e^s,y)|^2}{2\epsilon}\right)$$
for $r(\epsilon)=2\sqrt{||u||_{L^{\infty}(\mathbb{B})}\epsilon}$.
\end{lemma}
For these and further properties, see \cite{PPJ}.
%
%
%
\begin{lemma}\label{Lemma5.2}
Suppose that $u:\mathbb B\to\mathbb{R}$ is bounded in $\mathbb B$ and $f\in C(\mathbb{B})$, if $u$ is a viscosity supersolution to \eqref{eq:11} in $\mathbb B$, then $u_{\epsilon}$ satisfies
$$div_ \mathbb{B}(|\nabla_ \mathbb{B}u_{\epsilon}|^{p-2}\nabla_ \mathbb{B}u_{\epsilon})
     +(n-p)|\nabla_ \mathbb{B}u_{\epsilon}|^{p-2}(t\partial_t u_{\epsilon})\leq (t^pf(t,x))_{\epsilon} $$
     a.e. in $$\mathbb B_{r(\epsilon)}=\{(t,x)\in  \mathbb B\ \big{|}\ |(e^t,x)-(e^s,y)|^2>{r(\epsilon)}^2,\ \text{for all}\ (s,y)\in \partial  \mathbb B \cup \{0\}\times \partial X\}$$ where
     $$  (t^pf(t,x))_{\epsilon}=\mathop{\sup }\limits_{(s,y)\in { B_{r(\epsilon)}(t,x)}}s^pf(s,y).$$

\end{lemma}
\begin{lemma}\label{Lemma5.3}
 If $u$ is a viscosity supersolution to \eqref{eq:11}, then  the following inequality holds  for any non-negative $\psi  $  satisfying $\operatorname{supp} \psi \subset \mathbb{B}_{r(\epsilon)}$.
  $$
\int_{\mathbb{B}}\left|\nabla_ \mathbb{B} {u}_{\epsilon}\right|^{p-2}\nabla_ \mathbb{B} {u}_{\epsilon}\cdot \nabla_ \mathbb{B}\psi \frac{dt}{t}dx \geq \int_{\mathbb{B}}-\left(div_ \mathbb{B}(|\nabla_ \mathbb{B}{u}_{\epsilon}|^{p-2}\nabla_ \mathbb{B}{u}_{\epsilon})\right)\psi \frac{dt}{t}dx
$$
\end{lemma}
\begin{proof}
 Let $\varphi={u}_{\epsilon}-\frac{1}{2 \epsilon}|(t,x)|^2$ and  $\varphi_j$ be a sequence of smooth concave functions converging to $\varphi$, obtained via standard mollification, and  let ${u}_{\epsilon, j}=\varphi_j+\frac{1}{2 \epsilon}|(t,x)|^2$. Integration by parts gives
$$
\int_{\mathbb{B}}\left|\nabla_ \mathbb{B} {u}_{\epsilon,j}\right|^{p-2} \nabla_ \mathbb{B} {u}_{\epsilon,j} \cdot \nabla_ \mathbb{B}\psi \frac{dt}{t}dx = \int_{\mathbb{B}}-\left(div_ \mathbb{B}(|\nabla_ \mathbb{B}{u}_{\epsilon,j}|^{p-2}\nabla_ \mathbb{B}{u}_{\epsilon,j})\right)\psi \frac{dt}{t}dx
$$

Since ${u}_{\epsilon}$ is locally Lipschitz continuous, we clearly have
$$
\lim _{j \rightarrow \infty} \int_{\mathbb{B}}\left|\nabla_ \mathbb{B} {u}_{\epsilon,j}\right|^{p-2} \nabla_ \mathbb{B}{u}_{\epsilon,j} \cdot \nabla_ \mathbb{B}\psi \frac{dt}{t}dx =\int_{\mathbb{B}}\left|\nabla_ \mathbb{B} {u}_{\epsilon}\right|^{p-2}\nabla_ \mathbb{B} {u}_{\epsilon} \cdot \nabla_ \mathbb{B}\psi \frac{dt}{t}dx .
$$

On the other hand, since $D^2{u}_{\epsilon, j} \leq \frac{1}{\epsilon} I$ and $D {u}_{\varepsilon, j}$ is locally bounded, we have
$$
-\left(div_ \mathbb{B}(|\nabla_ \mathbb{B}{u}_{\epsilon,j}|^{p-2}\nabla_ \mathbb{B}{u}_{\epsilon,j})\right)\geq -C
$$
in the support of $\psi$. Thus, by Fatou's lemma,
$$
\begin{aligned}
&\liminf _{j \rightarrow \infty}  \int_{\mathbb{B}}-\left(div_ \mathbb{B}(|\nabla_ \mathbb{B}{u}_{\epsilon,j}|^{p-2}\nabla_ \mathbb{B}{u}_{\epsilon,j})\right)\psi \frac{dt}{t}dx\\ \geq& \int_{\mathbb{B}} \liminf _{j \rightarrow \infty}-\left(div_ \mathbb{B}(|\nabla_ \mathbb{B}{u}_{\epsilon,j}|^{p-2}\nabla_ \mathbb{B}{u}_{\epsilon,j})\right)\psi \frac{dt}{t}dx.
\end{aligned}
$$

 since $D^2 \varphi_j(t,x) \rightarrow D^2 \varphi(t,x)$ and  $D \varphi_j(t,x) \rightarrow D\varphi(t,x)$ for a.e. $(t,x)$, and thus
$$
\liminf _{j \rightarrow \infty}-\left(div_ \mathbb{B}(|\nabla_ \mathbb{B}{u}_{\epsilon,j}|^{p-2}\nabla_ \mathbb{B}{u}_{\epsilon,j})\right)=-\left(div_ \mathbb{B}(|\nabla_ \mathbb{B}{u}_{\epsilon}|^{p-2}\nabla_ \mathbb{B}{u}_{\epsilon})\right)
$$
for a.e. $(t,x)$. Putting everything together, we have

$$
\begin{aligned}
\int_{\mathbb{B}}\left|\nabla_ \mathbb{B} {u}_{\epsilon}\right|^{p-2}\nabla_ \mathbb{B} {u}_{\epsilon} \cdot \nabla_ \mathbb{B}\psi \frac{dt}{t}dx & =\int_{\mathbb{B}}-\left(div_ \mathbb{B}(|\nabla_ \mathbb{B}{u}_{\epsilon,j}|^{p-2}\nabla_ \mathbb{B}{u}_{\epsilon,j})\right)\psi \frac{dt}{t}dx\\
& \geq\liminf _{j \rightarrow \infty}  \int_{\mathbb{B}}-\left(div_ \mathbb{B}(|\nabla_ \mathbb{B}{u}_{\epsilon,j}|^{p-2}\nabla_ \mathbb{B}{u}_{\epsilon,j})\right)\psi \frac{dt}{t}dx \\
& \geq \int_{\mathbb{B}} \liminf _{j \rightarrow \infty}-\left(div_ \mathbb{B}(|\nabla_ \mathbb{B}{u}_{\epsilon,j}|^{p-2}\nabla_ \mathbb{B}{u}_{\epsilon,j})\right)\psi \frac{dt}{t}dx \\
& = \int_{\mathbb{B}}-\left(div_ \mathbb{B}(|\nabla_ \mathbb{B}{u}_{\epsilon}|^{p-2}\nabla_ \mathbb{B}{u}_{\epsilon})\right)\psi \frac{dt}{t}dx
\end{aligned}
$$
as desired.
\end{proof}

\begin{proof}[\textbf{Proof of Theorem \ref{Theorem 1.5}}]
By the Lemma \ref{Lemma5.2} and Lemma \ref{Lemma5.3}, we can obtain
 \begin{equation}\label{1.99}
  \begin{aligned}
&\int_{\mathbb{B}}\left|\nabla_ \mathbb{B} {u}_{\epsilon}\right|^{p-2}\nabla_ \mathbb{B} {u}_{\epsilon}\cdot \nabla_ \mathbb{B}\psi \frac{dt}{t}dx \\ \geq&\int_{\mathbb{B}} \left((-t^pf(t,x))_{\epsilon}+(n-p)|\nabla_ \mathbb{B}{u}_{\epsilon}|^{p-2}(t\partial_t {u}_{\epsilon})\right)\psi\frac{dt}{t}dx
\end{aligned}
\end{equation}
for any non-negative $\psi  $  satisfying $\operatorname{supp} \psi \subset \mathbb{B}_{r(\varepsilon)}$. Since ${u}_{\epsilon}\in \mathbb{H}_{p,loc}^{1,\frac{n}{p}}(\mathbb{B}) $ and ${\left(t(1-t)d(x,\partial X)\right)}^p\psi\in \mathbb{H}_{p,0}^{1,\frac{n}{p}}(\mathbb{B})$, so for any non-negative $\psi\in \mathcal{C}_{0}^{\infty}(\mathbb{B}_{r(\epsilon)}) $, the following inequality holds
\begin{equation}\label{2.66}
\begin{aligned}
&\int_{\mathbb{B}}{\left(t(1-t)d(x,\partial X)\right)}^p \left|\nabla_ \mathbb{B} {u}_{\epsilon}\right|^{p-2}\nabla_ \mathbb{B} {u}_{\epsilon} \cdot \nabla_ \mathbb{B}\psi \frac{dt}{t}dx \\ \geq&\int_{\mathbb{B}} {\left(t(1-t)d(x,\partial X)\right)}^p \left((-t^pf(t,x))_{\epsilon}+(n-p)|\nabla_ \mathbb{B}{u}_{\epsilon}|^{p-2}(t\partial_t {u}_{\epsilon})\right)\psi\frac{dt}{t}dx\\+&
\int_{\mathbb{B}} \psi\left|\nabla_ \mathbb{B} {u}_{\epsilon}\right|^{p-2}\nabla_ \mathbb{B} {u}_{\epsilon} \cdot \nabla_ \mathbb{B}{\left(t(1-t)d(x,\partial X)\right)}^p \frac{dt}{t}dx.
\end{aligned}
\end{equation}
We set $\xi\in \mathcal{C}_{0}^{\infty}(\mathbb{B}_{r(\epsilon)})$, $\xi=1$ in $\mathbb{B}_{3r(\epsilon)}$ and $\xi=0$ in $\mathbb{B}\setminus\mathbb{B}_{2r(\epsilon)}$. Let $\tilde{\psi}=(\mathop{\sup }\limits_{\mathbb{B}}{u}_{\epsilon}-{u}_{\epsilon} ){\xi}^{p}$. Though Lemma \ref{Lemma2.1}, we easily see $\tilde{\psi}\in \mathbb{H}_{p,0}^{1,\frac{n-p}{p}}(\mathbb{B}_{r(\epsilon)})$ as $\epsilon\to 0$, and by approximation we get
\begin{equation}\label{2.7}
\begin{aligned}
&\int_{\mathbb{B}}{\left(t(1-t)d(x,\partial X)\right)}^p \left|\nabla_ \mathbb{B} {u}_{\epsilon}\right|^{p-2}\nabla_ \mathbb{B} {u}_{\epsilon} \cdot \nabla_ \mathbb{B}\tilde{\psi} \frac{dt}{t}dx \\ \geq&\int_{\mathbb{B}} {\left(t(1-t)d(x,\partial X)\right)}^p \left((-t^pf(t,x))_{\epsilon}+(n-p)|\nabla_ \mathbb{B}{u}_{\epsilon}|^{p-2}(t\partial_t {u}_{\epsilon})\right)\tilde{\psi}\frac{dt}{t}dx\\+&
\int_{\mathbb{B}} \tilde{\psi}\left|\nabla_ \mathbb{B} {u}_{\epsilon}\right|^{p-2}\nabla_ \mathbb{B} {u}_{\epsilon} \cdot \nabla_ \mathbb{B}{\left(t(1-t)d(x,\partial X)\right)}^p \frac{dt}{t}dx.
\end{aligned}
\end{equation}
Therefore
$$
\begin{aligned}
&\int_{\mathbb{B}}{\left(t(1-t)d(x,\partial X)\right)}^p \left|\nabla_ \mathbb{B} {u}_{\epsilon}\right|^{p} {\xi}^{p}\frac{dt}{t}dx \\ \leq&-\int_{\mathbb{B}} {\left(t(1-t)d(x,\partial X)\right)}^p \left((-t^pf(t,x))_{\epsilon}+(n-p)|\nabla_ \mathbb{B}{u}_{\epsilon}|^{p-2}(t\partial_t {u}_{\epsilon})\right)\tilde{\psi}\frac{dt}{t}dx\\+& \int_{\mathbb{B}}p{\left(t(1-t)d(x,\partial X)\right)}^p\xi^{p-1}(\mathop{\sup }\limits_{\mathbb{B}}{u}_{\epsilon}-{u}_{\epsilon})\left|\nabla_ \mathbb{B} {u}_{\epsilon}\right|^{p-2}\nabla_ \mathbb{B} {u}_{\epsilon} \cdot \nabla_ \mathbb{B}\xi \frac{dt}{t}dx.
\end{aligned}
$$
By Young's inequality, we have

$$\begin{aligned}
&\int_{\mathbb{B}}{\left(t(1-t)d(x,\partial X)\right)}^p\left|\nabla_ \mathbb{B} {u}_{\epsilon}\right|^{p} {\xi}^{p}\frac{dt}{t}dx \\ \leq&\int_{\mathbb{B}} - {\left(t(1-t)d(x,\partial X)\right)}^p (-t^pf(t,x))_{\epsilon}\tilde{\psi}\frac{dt}{t}dx\\+&\theta \int_{\mathbb{B}}{\left(t(1-t)d(x,\partial X)\right)}^p\left|\nabla_ \mathbb{B} {u}_{\epsilon}\right|^{p} \xi^p\frac{dt}{t}dx \\+&C_{1}(\theta) \int_{\mathbb{B}}{\left(t(1-t)d(x,\partial X)\right)}^p\xi^{p}(\mathop{\sup }\limits_{\mathbb{B}}{u}_{\epsilon}-{u}_{\epsilon})^{p} \frac{dt}{t}dx \\+& \theta \int_{\mathbb{B}}{\left(t(1-t)d(x,\partial X)\right)}^p\left|\nabla_ \mathbb{B} {u}_{\epsilon}\right|^{p} \xi^p\frac{dt}{t}dx\\ +&C_{2}(\theta) \int_{\mathbb{B}}{\left(t(1-t)d(x,\partial X)\right)}^p(\mathop{\sup }\limits_{\mathbb{B}}{u}_{\epsilon}-{u}_{\epsilon})^{p} |\nabla_ \mathbb{B}\xi|^{p} \frac{dt}{t}dx.
\end{aligned}$$

Since  $ |\nabla \xi|\leq \frac{C}{r(\epsilon)}$ in $\mathbb{B}_{2r(\epsilon)} \setminus\mathbb{B}_{3r(\epsilon)} $,  and $|\nabla_{\mathbb{B}} \xi|\leq|\nabla \xi|$, then
$$\begin{aligned} &\int_{\mathbb{B}}{\left(t(1-t)d(x,\partial X)\right)}^p(\mathop{\sup }\limits_{\mathbb{B}}{u}_{\epsilon}-{u}_{\epsilon})^{p} |\nabla_ \mathbb{B}\xi|^{p} \frac{dt}{t}dx\\=&\int_{\mathbb{B}_{2r(\epsilon)} \setminus\mathbb{B}_{3r(\epsilon)}}{\left(t(1-t)d(x,\partial X)\right)}^p(\mathop{\sup }\limits_{\mathbb{B}}{u}_{\epsilon}-{u}_{\epsilon})^{p} |\nabla_ \mathbb{B}\xi|^{p} \frac{dt}{t}dx \\ \leq& ||u||_{L^{\infty}(\mathbb{B})}^p \left(\frac{{c_1}}{{r(\epsilon)}^p}\cdot {r(\epsilon)}^p\right)= {c_1}||u||_{L^{\infty}(\mathbb{B})}^p.
\end{aligned}$$
 Since $u\in L^{\infty}(\mathbb{B})$, and choosing $\theta=\frac{1}{4}$, then
 $$ \begin{aligned}
&\int_{\mathbb{B}}{\left(t(1-t)d(x,\partial X)\right)}^p\left|\nabla_ \mathbb{B} {u}_{\epsilon}\right|^{p} {\xi}^{p}\frac{dt}{t}dx \\ \leq&\int_{\mathbb{B}} - {\left(t(1-t)d(x,\partial X)\right)}^p (-t^pf(t,x))_{\epsilon}\tilde{\psi}\frac{dt}{t}dx\\ +&C_{1}(\theta) \int_{\mathbb{B}}{\left(t(1-t)d(x,\partial X)\right)}^p\xi^{p}(\mathop{\sup }\limits_{\mathbb{B}}{u}_{\epsilon}-{u}_{\epsilon})^{p} \frac{dt}{t}dx \\+&C_{2}(\theta) \int_{\mathbb{B}}{\left(t(1-t)d(x,\partial X)\right)}^p(\mathop{\sup }\limits_{\mathbb{B}}{u}_{\epsilon}-{u}_{\epsilon})^{p} |\nabla_ \mathbb{B}\xi|^{p} \frac{dt}{t}dx\\ \leq& C(\theta,p)\bigg(||u||_{{L}^{\infty}(\mathbb{B})}|| f||_{\mathbb{L}_{p}^{\gamma}}(\mathbb{B})+||u||_{L^{\infty}(\mathbb{B})}^p\int_{\mathbb{B}}{\left(t(1-t)d(x,\partial X)\right)}^p \frac{dt}{t}dx\\+&||u||_{L^{\infty}(\mathbb{B})}^p \bigg),
\end{aligned}$$

 so we have $$ \int_{\mathbb{B}_{3r(\epsilon)}}{\left(t(1-t)d(x,\partial X)\right)}^p\left|\nabla_ \mathbb{B} {u}_{\epsilon}\right|^{p} \frac{dt}{t}dx \leq C $$ where C is independent of $\epsilon$.

Since we find a uniform bound for $|\nabla_ \mathbb{B} {u}_{\epsilon}|$ in $\mathbb{L}_{p}^{\frac{n-p}{p}}(\mathbb{B}_{3r(\epsilon)})$, so $(\nabla_ \mathbb{B} {u}_{\epsilon})_{i}$  converges to $g_i(t,x)$ weakly in $\mathbb{L}_{p}^{\frac{n-p}{p}}(\mathbb{B})$, $i=1,2,\cdots, n$. Since ${u}_{\epsilon}$ converges pointwise to $u$, then for all $\psi\in\mathcal{C}_{0}^{\infty}(\mathbb{B})$, we have
$$ \begin{aligned}
\int_{\mathbb{B}}g_i(t,x){\psi}\frac{dt}{t}dx=\lim _{\epsilon\rightarrow 0}\int_{\mathbb{B}}(\nabla_ \mathbb{B} {u}_{\epsilon} )_i \psi\frac{dt}{t}dx=\int_{\mathbb{B}}{u}_{\epsilon} (\nabla_ \mathbb{B} \psi)_i\frac{dt}{t}dx=\int_{\mathbb{B}} {u} (\nabla_ \mathbb{B} \psi)_i\frac{dt}{t}dx
\end{aligned}
$$
then we derive $\nabla_ \mathbb{B} u=g(t,x)=\left(g_1(t,x),\cdots, g_n(t,x)\right) $, then we have
$$||u||_{ \mathbb{H}_{p}^{1,\frac{n-p}{p}}(\mathbb{B})}\leq C,$$
$$ ||u||_{ \mathbb{H}_{p,loc}^{1,\frac{n}{p}}(\mathbb{B})}< +\infty.$$
 Since $\nabla_ \mathbb{B} {u}_{\epsilon}$  converges to $\nabla_ \mathbb{B} {u}$ weakly in $\mathbb{L}_{p}^{\frac{n-p}{p}}(\mathbb{B})$ ,
 $$
\int_{\mathbb{B}}\theta( \left|\nabla_ \mathbb{B} {u}\right|^{p-2}\nabla_ \mathbb{B} {u}-\left|\nabla_ \mathbb{B}{u}_{\epsilon}\right|^{p-2}\nabla_ \mathbb{B} {u}_{\epsilon})\cdot \nabla_ \mathbb{B}({u}-{u}_{\epsilon}) \frac{dt}{t}dx \to 0
$$

 \begin{equation}\label{2.88}
 \int_{K}( \left|\nabla_ \mathbb{B} {u}\right|^{p-2}\nabla_ \mathbb{B} {u}-\left|\nabla_ \mathbb{B} {u}_{\epsilon}\right|^{p-2}\nabla_ \mathbb{B} {u}_{\epsilon})\cdot \nabla_ \mathbb{B}({u}-{u}_{\epsilon})
 \frac{dt}{t}dx\to 0.\end{equation}

 Though [Lemma 3.73], we know \eqref{2.88} implies $\nabla_ \mathbb{B}{u}_{\epsilon}\to\nabla_\mathbb{B}{u}$ for a.e. $(t,x)\in K$ and $$||\nabla_ \mathbb{B}{u}_{\epsilon}||_{\mathbb{L}^{\frac{n}{p}}_{p}( K)}\leq M<+\infty.$$ Finally taking  a limit on inequality \eqref{1.99} we have
 \begin{equation}\label{1.22}
\int_{\mathbb{B}}\left|\nabla_ \mathbb{B} u\right|^{p-2}\nabla_ \mathbb{B} u\cdot \nabla_ \mathbb{B}\psi \frac{dt}{t}dx \geq\int_{\mathbb{B}}  \left(-t^pf(t,x)+(n-p)|\nabla_ \mathbb{B}u|^{p-2}(t\partial_t u)\right)\psi\frac{dt}{t}dx.
\end{equation}

 \end{proof}

\begin{proof}[\textbf{Proof of Theorem \ref{Theorem 1.6}}]
If $u$ is a viscosity subsolution to \eqref{eq:11}, then $-u$ is a viscosity supersolution to \eqref{eq:11} with $-f$ instead of $f$. So we have $-u$ is  a weak supersolution to \eqref{eq:11} with $-f$ instead of $f$ by Theorem \ref{Theorem 1.5}, then  $u$ is  a weak subsolution to \eqref{eq:11}.
 \end{proof}
\begin{corollary}
    Under Assumption \ref{assumption2.2}, let  $f \in C( \overline{\mathbb B})\cap L^{\infty}(\mathbb{B})$, then
     \eqref{H8} has at least one weak solution $v$ satisfying \eqref{T1.10}.
 \end{corollary}

\subsection*{Acknowledgments}
The authors are grateful to the referees for their careful reading and valuable comments.

\end{document}